\newcommand{\1}[1]{{\mathbf 1}{\{#1\}}}
\newcommand{\vr}{\varrho}
\newcommand{\eps}{\varepsilon}
\newcommand{\Z}{{\mathbb Z}}
\newcommand{\B}{{\mathsf B}}
\newcommand{\V}{{\mathcal V}}
\newcommand{\LL}{{\mathcal L}}
\newcommand{\G}{{\mathcal G}}
\newcommand{\D}{{\mathcal D}}
\newcommand{\R}{{\mathbb R}}
\newcommand{\EE}{{\mathcal E}}
\newcommand{\F}{{\mathcal F}}
\newcommand{\RI}{\mathop{\mathrm{RI}}}
\newcommand{\s}{{\widehat S}}
\newcommand{\JJ}{{\mathfrak J}}
\let\phi=\varphi
\newcommand{\E}{{\mathbb E}}
\newcommand{\hpsi}{{\widehat\psi}}
\newcommand{\hphi}{{\widehat\varphi}}
\newcommand{\tW}{{\widetilde W}}
\newcommand{\diam}{{\mathop{\mathrm{diam}}}}
\newcommand{\8}{{\infty}}
\newcommand{\he}{\widehat{e}}
\newcommand{\hL}{\widehat{L}}
\newcommand{\eqlaw}{\stackrel{\text{\tiny law}}{=}}
\newcommand{\convlaw}{\stackrel{\text{\tiny law}}{\longrightarrow}}
\newcommand{\IP}{{\mathbb P}}
\newcommand{\IE}{{\mathbb E}}
\newcommand{\tP}{\widetilde{P}}
\DeclareMathSymbol{\widehatsym}{\mathord}{largesymbols}{"62}
\newcommand{\hZ}{\widehat{Z}}
\newcommand{\tZ}{\widetilde{Z}}
\newcommand{\capa}{\mathop{\mathrm{cap}}}
\newcommand{\hm}{\mathop{\mathrm{hm}}\nolimits}
\newcommand{\hhm}{\mathop{\widehat{\mathrm{hm}}}\nolimits}
\newcommand{\Es}{\mathop{\mathrm{Es}}\nolimits}
\newcommand{\dist}{\mathop{\mathrm{dist}}}
\newcommand{\htau}{\widehat{\tau}}
\newcommand{\tS}{{\widetilde S}}
\newcommand{\tN}{{\widetilde N}}
\newcommand{\tL}{{\widetilde\LL}}
\newcommand{\ttL}{{\widetilde L}}
\newcommand{\tH}{{\widetilde H}}
\newcommand{\ttau}{{\tilde\tau}}
\newtheorem{theo}{Theorem}[section]
\newtheorem{lem}[theo]{Lemma}
\newtheorem{df}[theo]{Definition}
\newtheorem{prop}[theo]{Proposition}
\title{The vacant set of two-dimensional critical
 random interlacement is infinite}
\author{Francis Comets$^{1}$ \and
 Serguei~Popov$^{2}$}
\begin{document}

\maketitle

{\footnotesize 
\noindent $^{~1}$Universit\'e Paris Diderot -- Paris 7, 
Math\'ematiques, 
 case 7012, F--75205 Paris
Cedex 13, France
\\
\noindent e-mail:
\texttt{comets@math.univ-paris-diderot.fr}

\noindent $^{~2}$Department of Statistics, Institute of Mathematics,
 Statistics and Scientific Computation, University of Campinas --
UNICAMP, rua S\'ergio Buarque de Holanda 651,
13083--859, Campinas SP, Brazil\\
\noindent e-mail: \texttt{popov@ime.unicamp.br}

}

\begin{abstract}
For the model of two-dimensional 
 random interlacements in the critical regime (i.e., $\alpha=1$),
we prove that the vacant set is a.s.\ infinite, thus solving an
open problem from~\cite{CPV15}. Also, we prove that the entrance
measure of simple random walk on annular domains has certain 
regularity properties; this result is useful when dealing
with soft local times for excursion processes.
 \\[.3cm]\textbf{Keywords:} random interlacements, vacant set, 
 critical regime, simple random walk, Doob's $h$-transform, annular domain
\\[.3cm]\textbf{AMS 2010 subject classifications:}
Primary 60K35. Secondary 60G50, 82C41.
\end{abstract}

\section{Introduction and results}
\label{s_intro}
The model of random interlacements,
recently introduced by Sznitman~\cite{Szn10}, 
has proved its usefulness for studying
fine properties of traces left by simple random walks on graphs.
The ``classical'' random interlacements is a Poissonian soup
of (transient) simple random walks' trajectories in~$\Z^d$,
$d\geq 3$; we refer to recent books~\cite{CT12, DRS14}.
Then, the model of two-dimensional random interlacements was
introduced in~\cite{CPV15}.
Observe that, in two dimensions, even a single trajectory
of a simple random walk is space-filling.
Therefore, to define the process in a meaningful way,
one uses the SRW's trajectories \emph{conditioned}
on never hitting the origin, see the details below.
We observe 
also that the use of conditioned trajectories 
to build the interlacements goes back to Sznitman~\cite{Szn14},
see the definition of 
``tilted random interlacements'' there.
%  on page~3 
% \textbf{(check the page, when published)}. 
Then, it is known (Theorem~2.6 of~\cite{CPV15}) that, 
for a random walk on a large torus conditioned on not 
hitting the origin up to some time proportional to the mean cover time, 
the law of the vacant set around the origin is close
to that of random interlacements at the corresponding level.
This means that, similarly to higher-dimensional case,
 two-dimensional random interlacements
have strong connections to random walks on discrete tori.

% \subsection{Formal definitions and results}
% \label{s_defs_results}
Now, let us recall the formal construction of
 (discrete) two-dimensional random interlacements.

In the following, $\|\cdot\|$ denotes the Euclidean norm
in~$\R^2$ or $\Z^2$, and $\B(x,r)=\{y: \|x-y\|\leq r\}$ is the 
(closed) ball of radius~$r$ centered in~$x$. 
We write $f(n)=O(g(n))$ as $n\to\infty$
when, for some constant~$M>0$, $|f(n)|\leq M|g(n)|$ for all
large enough~$n$; $f(n)=o(g(n))$ means that
 $\lim_{n\to \infty}\frac{f(n)}{g(n)}=0$.
Also, we write $f(n)\sim g(n)$ when $\lim_{n\to \infty}\frac{f(n)}{g(n)}=1$. In fact, the same symbol is also used 
for neighboring sites: for $x,y\in\Z^d$ we
write $x\sim y$ if $\|x-y\|=1$, that is, $x$ and~$y$ 
are neighbors. Hopefully, this creates no confusion
since the meaning of ``$\sim$''
 will always be clear from the context.

% \subsection{Two-dimensional random interlacements: the discrete case}
% \label{s_int_discr}
Let~$(S_n, n\geq 0)$ be two-dimensional simple
random walk. Write~$\IP_x$ for the law of the walk started from~$x$
and~$\IE_x$ for the corresponding expectation.
Let
\begin{align}
\tau_0(A) &= \inf\{k\geq 0: S_k\in A\} \label{entrance_t},\\
\tau_1(A) &= \inf\{k\geq 1: S_k\in A\} \label{hitting_t}
\end{align}
be the entrance and the hitting time of the set~$A$ by 
simple random walk~$S$ (we use the convention $\inf \emptyset = +\8$). 
Define the potential kernel~$a$ by
\begin{equation}
\label{def_a(x)}
a(x) = \sum_{k=0}^\infty\big(\IP_0[S_k\!=\!0]-\IP_x[S_k\!=\!0]\big).
\end{equation}
It can be shown that the above series indeed converges 
and we have~$a(0)=0$, $a(x)>0$ for $x\neq 0$, and
\begin{equation}
\label{formula_for_a}
 a(x) = \frac{2}{\pi}\ln \|x\| + \frac{2\gamma+\ln 8}{\pi} 
 + O(\|x\|^{-2}) 
\end{equation}
as $x\to\infty$, where $\gamma=0.5772156\dots$ 
is the Euler-Mascheroni constant,
cf.\ Theorem~4.4.4 of~\cite{LL10}.
% (the value of $\gamma'$ is 
% known\footnote{$\gamma'=\pi^{-1}(2\gamma+\ln 8)$, 
% where $\gamma=0.5772156\dots$ is the Euler-Mascheroni constant}, 
% but we will not need it in
% this paper).
Also, the function~$a$ is harmonic outside the origin, i.e.,
\begin{equation}
\label{a_harm}
 \frac{1}{4}\sum_{y: y\sim x}a(y) = a(x) \quad \text{ for all }
 x\neq 0.
\end{equation}
Observe that~\eqref{a_harm} implies 
that $a(S_{k\wedge \tau_0(0)})$ is a martingale.

The \emph{harmonic measure} of a finite $A\subset\Z^2$
is the entrance law ``starting at infinity''\footnote{observe
that the harmonic
measure can be defined in almost the same way in higher dimensions,
one only has to condition that~$A$ is eventually hit,
cf.\ Proposition~6.5.4 of~\cite{LL10}},
\begin{equation}
\label{def_hm}
 \hm_A(x) = \lim_{\|y\|\to\infty}\IP_y[S_{\tau_1(A)}=x]
\end{equation}
(see e.g.\ Proposition~6.6.1 of~\cite{LL10} for the 
proof of the existence of the above limit).
For a finite set~$A$ containing the origin,
we define its capacity by
\begin{equation}
\label{df_cap2}
\capa(A) = \sum_{x\in A}a(x)\hm_A(x);
\end{equation}
in particular, $\capa\big(\{0\}\big)=0$ since $a(0)=0$.
For a set not containing the origin, its capacity is defined
as the capacity of a translate of this set that does contain
the origin. Indeed, it can be shown that the capacity does not depend
on the choice of the translation.
Some alternative definitions are available,
cf.\ Section~6.6 of~\cite{LL10}.

Next, we define another random walk $(\s_n, n\geq 0)$
on~$\Z^2\setminus \{0\}$ in the following way:
the transition probability from~$x\neq 0$ to~$y$ 
equals $\frac{a(y)}{4a(x)}$ for all $x\sim y$.
Note that~\eqref{a_harm} implies that the random walk~$\s$
is indeed well defined, and, clearly, it is 
an irreducible Markov chain on~$\Z^2\setminus \{0\}$.
It can be easily checked that it is reversible
with the reversible measure~$a^2(\cdot)$, and transient
(for a quick proof of transience, just verify that
$1/a(\s)$ is a martingale outside the origin and its four 
neighbors, and use e.g.\ Theorem~2.5.8 of~\cite{MPW}).

For a finite~$A\subset \Z^2$,
define the \emph{equilibrium measure} with respect to the walk~$\s$:
\[
 \he_A(x) = \1{x\in A} 
 \IP_x\big[\s_k\notin A \text{ for all }k\geq 1\big] a^2(x),
\]
and the harmonic measure (again, with respect to the walk~$\s$)
\[
 \hhm_A(x) =  \he_A(x) \Big(\sum_{y\in A} \he_A(y)\Big)^{-1}.
\]
Also, note that~(13) and~(15) of~\cite{CPV15}
imply that $\he_A(x)=a(x)\hm_A(x)$ in the case $0\in A$, 
that is, the harmonic measure
for~$\s$ is the usual harmonic measure \emph{biased} by~$a(\cdot)$.
Now, we use the general construction of random interlacements
on a transient weighted graph introduced in~\cite{T09}.
In the following few lines we briefly summarize this
construction.
Let~$\mathcal{W}$ be the space of all doubly infinite
nearest-neighbour 
transient trajectories in~$\Z^2$,
\begin{align*}
 \mathcal{W} =& \big\{\vr=(\vr_k)_{ k\in \Z}: 
\vr_k\sim \vr_{k+1} \text{ for all }k;\\
&~~~~~~~~~~\text{ the set }
 \{m: \vr_m=y\} \text{ is finite for all }y\in\Z^2 \big\}.
\end{align*}
We say that~$\vr$ and~$\vr'$ are equivalent if they 
coincide after a time shift, i.e., $\vr\sim\vr'$
when there exists~$k$ such that $\vr_{m+k}=\vr_m'$ for all~$m$.
Then, let $\mathcal{W}^*=\mathcal{W}/\sim$ be the space 
of trajectories
modulo time shift, and define~$\chi^*$ to be the canonical
projection from~$\mathcal{W}$ to~$\mathcal{W}^*$. 
For a finite $A\subset \Z^2$, 
let~$\mathcal{W}_A$ be the set of trajectories in~$\mathcal{W}$ that intersect~$A$,
and we write~$\mathcal{W}^*_A$ for the image of~$\mathcal{W}_A$ under~$\chi^*$.
One then constructs the random interlacements as Poisson
point process on $\mathcal{W}^*\times \R^+$ with the intensity measure
$\nu\otimes du$, where~$\nu$ is described in the following
way. It is the unique sigma-finite measure on 
the cylindrical sigma-field  of~$\mathcal{W}^*$
such that for every finite~$A$
\[
 \mathbf{1}_{\mathcal{W}^*_A} \cdot \nu = \chi^* \circ Q_A,
\]
where the finite measure~$Q_A$ on~$\mathcal{W}_A$ is determined by the
following equality:
\[
Q_A\big[(\vr_k)_{k\geq 1}\in F, \vr_0=x, 
(\vr_{-k})_{k\geq 1}\in G\big]
=  \he_A(x) \IP_x[\s\in F] 
 \IP_x\big[\s\in G\mid \htau_1(A)=\infty\big].
\]
The existence and uniqueness of~$\nu$ was shown in 
Theorem~2.1 of~\cite{T09}.

\begin{df} \label{def:ri}
For a configuration $\sum_{\lambda}\delta_{(w^*_\lambda,u_\lambda)}$
of the above Poisson process, the process of 
two-dimensional random interlacements
at level~$\alpha$ (which will be referred to as RI($\alpha$))
is defined as the set of trajectories with label less than or equal 
to~$\pi\alpha$, i.e.,
\[
 \sum_{\lambda: u_\lambda\leq \pi\alpha}  
 \delta_{w^*_\lambda} .
\]
\end{df}
As mentioned in~\cite{CPV15}, in the above definition
it is convenient to pick the points with the $u$-coordinate
at most~$\pi\alpha$ (instead of just~$\alpha$, as in
the ``classical'' random interlacements model), since the 
formulas become generally cleaner.

It can be shown (see Section~2.1 of~\cite{CPV15},
in particular, 
Proposition~2.2 there) that the law of the vacant set~$\V^\alpha$
(i.e., the set of all sites not touched by the trajectories) of the
two-dimensional random interlacements can be uniquely
characterized by the following equality:
\begin{equation}
\label{eq_vacant2}
 \IP[A\subset \V^\alpha] = \exp\big(-\pi\alpha \capa(A)\big),
 \quad \text{ for all $A\subset\Z^2$ such that $0\in A$}.
\end{equation}
 
It is important to have in mind the following ``constructive''
description of the trace of RI($\alpha$) on
 a finite set $A\subset \Z^2$ such that $0\in A$. Namely,
\begin{itemize}
 \item take a Poisson($\pi\alpha\capa(A)$) number of particles;
 \item place these particles on the boundary of~$A$
 independently, with distribution
% $\overline{e}_A = \big((\capa A)^{-1}\he_A(x), x\in A\big)$;
$\hhm_A$;
 \item let the particles perform independent $\s$-random walks
 (since~$\s$ is transient, each walk only leaves a finite trace
 on~$A$).
\end{itemize}
In particular, note that~\eqref{eq_vacant2} is a direct
consequence of this description.

Some other basic properties of two-dimensional random
interlacements are contained in Theorems~2.3 and~2.5 of~\cite{CPV15}.
In particular, the following facts are known:
\begin{enumerate}
 \item The conditional translation invariance: 
 for all  $\alpha>0$, $x\in\Z^2 \setminus \{0\}$,
 $A\subset \Z^2$, and any
 lattice isometry~$\mathfrak{N}$ exchanging~$0$ and~$x$, we have 
\begin{equation}
\label{properties_RI_i'}
 \IP[A\subset\V^\alpha \mid x\in \V^\alpha]=
  \IP[\mathfrak{N}(A) \subset\V^\alpha \mid x\in \V^\alpha].
\end{equation}
 \item The probability that a given site is vacant
 is
\begin{equation}
 \label{properties_RI_ii}
\IP[x\in \V^\alpha]=\exp\Big(-\pi\alpha \frac{a(x)}{2}
\Big)
={\hat c}\|x\|^{-\alpha}\big(1+O(\|x\|^{-2})\big)
\end{equation}
(also, note that~\eqref{formula_for_a} yields
 an explicit expression for the constant ${\hat c}$
in~\eqref{properties_RI_ii}).
\item 
Clearly, \eqref{properties_RI_ii} implies that, as $r\to \infty$,
\begin{equation}
\label{expected_size_RI}
 \E \big(\vert \V^\alpha\cap \B(r)\vert \big) \sim
  \begin{cases}
   \text{const} \times r^{2-\alpha}, & \text{ for }\alpha < 2,\\
  \text{const} \times \ln r, & \text{ for }\alpha = 2,\\
   \text{const} , & \text{ for }\alpha > 2.
 \end{cases}
\end{equation}
\item For~$A$ such that $0\in A$
it holds that
\begin{equation}
\label{properties_RI_iii}
 \lim_{x\to \infty}\IP[A\subset\V^\alpha \mid x\in \V^\alpha]= 
   \exp\Big(-\frac{\pi\alpha}{4}\capa(A)
\Big).
\end{equation}
Informally speaking, if we condition that a very distant site is
vacant, this decreases the level of the interlacements 
around the origin by factor~$4$. A brief heuristic
explanation of this fact is given after (35)--(36) of~\cite{CPV15}.
\item The relation~\eqref{expected_size_RI} means that there is a
phase transition for the expected size of the vacant set at~$\alpha=2$.
However, the phase transition for the size itself occurs at~$\alpha=1$.
Namely, for $\alpha>1$ it holds that~$\V^\alpha$ is
 finite a.s., and for $\alpha \in (0,1)$
 we have $|\V^\alpha|=\infty$ a.s.
\end{enumerate}

The main contribution of this paper
is the following result: the vacant
set is a.s.\ infinite in the critical case $\alpha=1$:
\begin{theo}
\label{t_critical}
 It holds that $|\V^1|=\infty$ a.s.
\end{theo}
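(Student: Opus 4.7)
The plan is to prove the theorem by a soft-local-times coupling on annular excursion processes, reducing the problem to a more tractable i.i.d.\ model in which a second-moment argument becomes available. Concretely, I would show that in each of suitably chosen annular regions $A_k=\B(2r_k)\setminus\B(r_k)$ (with radii $r_k$ growing fast enough to decorrelate different scales), the probability of containing a vacant site is uniformly bounded below, and then conclude via a Borel--Cantelli-type argument.

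First, I would invoke the constructive description of RI$(1)$ on $\B(2r_k)$ to realize the trace of the interlacement on $A_k$ as the complement, in $A_k$, of the traces of a Poisson$\bigl(\pi\capa(\B(2r_k))\bigr)$ collection of independent $\s$-walks starting from $\partial\B(2r_k)$ with distribution $\hhm_{\B(2r_k)}$. Splitting these walks into excursions between concentric auxiliary circles around $A_k$, the soft local times method couples the resulting excursion process to an i.i.d.\ family of $\s$-excursions drawn from the appropriate entrance measure; the total-variation cost of this coupling is precisely what the regularity result on entrance measures announced in the abstract is designed to control. In the resulting i.i.d.\ model, vacancy correlations factor cleanly, and a first- and second-moment computation using \eqref{properties_RI_ii} (together with $\E|\V^1\cap A_k|\sim cr_k\to\infty$) and a Paley--Zygmund inequality yields $\PP[\V^1\cap A_k\ne\emptyset]\ge c_0>0$ uniformly in~$k$.

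Choosing $r_k$ super-exponentially (for instance $r_k=2^{2^k}$), the excursions relevant to different $A_k$'s come from essentially disjoint parts of the underlying Poisson point process, so the events $\{\V^1\cap A_k\ne\emptyset\}$ become asymptotically independent across $k$. A second Borel--Cantelli argument then produces infinitely many vacant annuli almost surely, whence $|\V^1|=\infty$ a.s.

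The main obstacle is making the coupling sharp enough. A naive second-moment argument applied directly to $|\V^1\cap A_k|$ in the true RI model fails: by \eqref{properties_RI_iii}, conditioning on any single distant vacant site renormalizes the effective level of the process from $1$ down to the subcritical value $1/4$, which injects long-range positive correlations that blow up the second moment of the vacant count by a factor diverging with $r_k$. The role of the soft local times coupling is precisely to replace the actual, correlated excursion process by an i.i.d.\ one in which this pathology is absent; the new entrance-measure regularity is what keeps the coupling error small enough to preserve the Paley--Zygmund margin at the critical level $\alpha=1$.
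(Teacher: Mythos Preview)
Your proposal has a genuine gap at its core: the claim that a second-moment/Paley--Zygmund argument works in the i.i.d.\ excursion model at the critical level is asserted but not justified, and in fact is unlikely to go through as stated. Vacant sites for a critical number of i.i.d.\ excursions are essentially the ``late points'' of simple random walk at the cover-time scale, and those are known to cluster strongly (cf.\ \cite{DPRZ06}); a back-of-the-envelope computation gives $\E[|V|^2]\asymp (\E|V|)^2\cdot (r/d)^{c''}$-type correlations with $m\,p_{x,y}\asymp c''\ln(r/d)$, and there is no reason to expect $c''\le 2$ at criticality. Producing a uniform lower bound on $\IP[V\neq\emptyset]$ for the i.i.d.\ model at the critical excursion count is essentially equivalent to proving a non-trivial cover-time estimate --- which is a deep input, not something a naive moment calculation delivers. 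Your diagnosis that the raw RI second moment blows up (via~\eqref{properties_RI_iii}) is correct, but replacing the compound-Poisson excursion count by a fixed or Poisson count does not by itself tame the \emph{local} clustering that still governs the second moment.

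There is also a coupling mismatch you have not addressed: to embed the RI excursions inside the i.i.d.\ ones you need $N_k$ (the random RI excursion count, compound Poisson with standard deviation of order $\ln^{3/2}b_k$) to be at most the i.i.d.\ count, and this fails with probability bounded away from~$0$ unless you inflate the i.i.d.\ count --- which then pushes the i.i.d.\ model past criticality and kills your first moment.

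The paper's proof avoids moments entirely and is structurally quite different. It works on balls $B_k$ of radius $b_k^{1/2}$ centered at distance~$b_k$ from the origin (so $\E|\V^1\cap B_k|=O(1)$, not growing). The argument combines three ingredients: (i) a CLT for the compound-Poisson excursion count~$N_k$, giving $\IP[N_k\le \text{mean}-q_\beta\cdot\text{std}]\to\beta$ for any~$\beta$; (ii) Ding's cover-time theorem for~$\Z_n^2$, transferred via a torus--i.i.d.\ soft-local-time coupling, to show that a slightly sub-critical number of i.i.d.\ excursions leaves $B_k$ uncovered with probability at least $\pi/(4\gamma^2)-\delta$; and (iii) a second soft-local-time coupling (the event~$M_k$) to embed the first $N_k$ RI excursions inside those i.i.d.\ ones on the event in~(i). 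The key numerical point is that the CLT fluctuations $\asymp\ln^{3/2}b_k$ dominate the $\ln b_k\cdot\ln\ln b_k$ second-order correction in the cover time; choosing~$\gamma$ close to~$1$ and~$\beta$ close to~$1/2$ makes $\beta+\pi/(4\gamma^2)>1$, so the events in~(i) and~(ii) intersect with positive probability. Your entrance-measure regularity is indeed used, but only inside the soft-local-time large-deviation Lemma~\ref{l_LD_SLT}; it does not by itself produce the vacancy lower bound.
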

The above result may seem somewhat surprising, for the following
reason. As shown in~\cite{CPV15},
  the case $\alpha=1$ corresponds to the leading
term in the expression for the cover time of the two-dimensional
torus. It is known (cf.\ \cite{BK14,D12}), however, 
that the cover time has a
\emph{negative} second-order correction, 
which could be an evidence in favor of finiteness of~$\V^1$
(informally, the ``real'' all-covering regime should be
``just below'' $\alpha=1$).
On the other hand, it turns out that local fluctuations
of excursion counts overcome that negative correction,
thus leading to the above result. 
 
For $A\subset \Z^d$, denote by $\partial A=\{x\in A:\text{there
exists }y\notin A \text{ such that }x\sim y\}$ its internal
boundary.
Next, for simple random walk and a finite set $A\subset \Z^d$, 
let~$H_A$
be the corresponding Poisson kernel: for $x\in A$, $y\in\partial A$,
\begin{equation}
\label{df_Poisson_kernel}
 H_A(x,y) = \IP_x[S_{\tau_0(\partial A)}=y]
\end{equation}
(that is, $H_A(x,\cdot)$ is the exit measure from~$A$ starting at~$x$).
We need the following result, which states that,
if normalized by the harmonic measure, the entrance
measure to a large discrete ball is ``sufficiently regular''.
This fact will be an important tool for estimating large
deviation probabilities for soft local times \emph{without}
using union bounds with respect to sites of~$\partial A$
 (surely, the reader understands
that sometimes union bounds are just too rough). Also, 
we formulate it in all dimensions $d\geq 2$ for future 
reference\footnote{this fact is also needed at least 
in the paper~\cite{BGP}}.
\begin{prop}
\label{p_SRW_Hoelder}
Let $c>1$ and $\eps\in (0,1)$ be constants 
such that $c(1-\eps)>1+2\eps$, 
and abbreviate $A_n=(\B(cn)\setminus\B(n))\cup \partial\B(n)$.
Then, there exist positive constants $\beta,C$ (depending on~$c,\eps$,
and the dimension) such that for any 
$x\in \B(c(1-\eps)n)\setminus\B((1+2\eps)n)$
and any $y,z\in \partial\B(n)$ it holds that
\begin{equation}
\label{eq_SRW_Hoelder}
\Big|\frac{H_{A_n}(x,y)}{\hm_{\B(n)}(y)}
 - \frac{H_{A_n}(x,z)}{\hm_{\B(n)}(z)} \Big|  
\leq  C \Big(\frac{\|y-z\|}{n}\Big)^\beta
\end{equation}
for all large enough~$n$.
\end{prop}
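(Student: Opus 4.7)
My plan is to deduce Proposition~\ref{p_SRW_Hoelder} from a quantitative discrete boundary Harnack principle for the simple random walk in the annular domain $A_n$. The key objects are the two positive, $A_n^o$-harmonic functions
\[
 u_y(w) := H_{A_n}(w,y),\qquad u_z(w) := H_{A_n}(w,z),
\]
for $y,z\in\partial\B(n)$; each vanishes on $\partial A_n$ away from its respective source, and the ratio $F_x(y):=H_{A_n}(x,y)/\hm_{\B(n)}(y)$ is of order one uniformly in~$x$ and~$y$ by the interior Harnack inequality, so that an additive bound as in~\eqref{eq_SRW_Hoelder} is equivalent (up to a bounded prefactor) to a multiplicative comparison of $u_y(x)/u_z(x)$ with $\hm_{\B(n)}(y)/\hm_{\B(n)}(z)$.

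First, I would identify $\hm_{\B(n)}(y)/\hm_{\B(n)}(z)$ with the ratio $u_y/u_z$ ``at infinity.'' Decomposing a walk started at $v$ with $\|v\|\to\infty$ at its first crossing of $\partial\B(cn)$, the crossing distribution converges to $\hm_{\B(cn)}$, and iterating by the strong Markov property (with the ``cancellation'' from failed passages being identical for $y$ and $z$) gives
\[
 \frac{\hm_{\B(n)}(y)}{\hm_{\B(n)}(z)}\;=\;\frac{\sum_{u}\hm_{\B(cn)}(u)\,H_{A_n}(u,y)}{\sum_{u}\hm_{\B(cn)}(u)\,H_{A_n}(u,z)}.
\]
Hence~\eqref{eq_SRW_Hoelder} reduces to showing that $u_y(x)/u_z(x)$ differs from this $\hm_{\B(cn)}$-averaged ratio by at most $C(\|y-z\|/n)^\beta$, uniformly over $x$ in the given interior region.

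Next, I would prove this via the classical dyadic Harnack-chain iteration. Set $R:=\|y-z\|$ and introduce the shells $\Gamma_k:=\{w\in A_n:\ \dist(w,\{y,z\})\asymp 2^k R\}$. For $2^k R$ much less than $\eps n$ both $u_y$ and $u_z$ are $A_n^o$-harmonic on a ball of radius $\asymp 2^k R$ about any point of $\Gamma_k$, so the discrete Harnack inequality applies with a uniform constant. The core estimate, a single-step oscillation-reduction lemma, asserts that
\[
 \mathrm{osc}_{\Gamma_{k+1}}(u_y/u_z)\;\leq\;(1-\theta)\,\mathrm{osc}_{\Gamma_k}(u_y/u_z)
\]
for some $\theta\in(0,1)$ depending only on $c,\eps,d$. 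Iterating over $K\asymp\log_2(\eps n/R)$ scales produces the Hölder bound with exponent $\beta=-\log_2(1-\theta)$; the complementary regime $R\gtrsim\eps n$ is controlled directly by the Harnack inequality and the boundedness of $F_x$.

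The main obstacle is the per-step oscillation-reduction lemma. In the continuous setting this is proved using an explicit barrier derived from the Poisson kernel of a ball; on the lattice one builds the discrete barrier from the potential kernel $a(\cdot)$ in dimension two (see~\eqref{formula_for_a}) or from the Green's function in $d\geq 3$, absorbing $O(1)$ lattice corrections into the constants without degrading the Hölder exponent. A secondary technicality is that $\hm_{\B(n)}$ in $d\geq 3$ is defined with conditioning on $\tau_1(\B(n))<\infty$ (cf.\ the footnote after~\eqref{def_hm}); this contributes only a multiplicative factor that is essentially constant in~$y$ to leading order and folds harmlessly into~$C$.
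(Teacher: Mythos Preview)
Your approach is a valid alternative, but it follows a genuinely different route from the paper's.

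The paper does \emph{not} invoke a boundary Harnack principle as a black box. Instead it (i) uses reversibility (Lemma~\ref{l_GH_revers}) to rewrite $H_{A_n}(x,y)$ as the expected number of visits to~$x$ starting from~$y$; (ii) observes via Lemma~\ref{l_escape_to} that the escape probability from $u\in\partial\B(n)$ to $\partial\B((1+\eps)n)$ equals $\hm_{\B(n)}(u)$ times a factor independent of~$u$ up to $O(n^{-1})$; and (iii) introduces the Doob $h$-transform walk conditioned to reach $\partial\B((1+\eps)n)$ before returning to~$\partial\B(n)$, and \emph{couples} the two conditioned walks started from~$y$ and~$z$ over $\asymp\log(n/\|y-z\|)$ dyadic shells. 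The per-step coupling uses only the \emph{interior} Harnack inequality together with an explicit gambler's-ruin estimate~\eqref{escape_box_Psi} to push a walk that lands too close to~$\partial\B(n)$ back into the bulk. Your oscillation-reduction scheme for $u_y/u_z$ is the harmonic-analytic dual of this coupling: the per-step lemma you identify as ``the main obstacle'' is exactly a discrete boundary Harnack inequality in a half-ball-shaped region of~$A_n$. That estimate is true, but proving it from scratch (barriers built from~$a(\cdot)$ or~$G$, etc.) is of comparable difficulty to the paper's coupling construction. So your route is more recognisable to an analyst but not shorter; the paper's version buys self-containedness, needing only interior Harnack and elementary hitting estimates.

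One small slip to fix: your displayed identity for $\hm_{\B(n)}(y)/\hm_{\B(n)}(z)$ as a ratio of $\hm_{\B(cn)}$-averages of $H_{A_n}(u,\cdot)$ is not literally correct, since $u\in\partial\B(cn)\subset\partial A_n$ forces $H_{A_n}(u,y)=\1{u=y}=0$. You should decompose instead at the last visit to an interior shell (e.g.\ $\partial\B((c-\eps)n)$) before hitting~$\B(n)$, obtaining $\hm_{\B(n)}(y)=\sum_{u}\nu(u)\,u_y(u)$ for some measure~$\nu$ supported in the interior of~$A_n$. The intended conclusion---that $\hm_{\B(n)}(y)/\hm_{\B(n)}(z)$ lies within the oscillation range of $u_y/u_z$ at scale~$\asymp n$---then goes through unchanged.
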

We conjecture that the above should be true
with $\beta=1$, since one can directly
check that it is indeed the case for the Brownian motion
(observe that the harmonic measure on the sphere is uniform
in the continuous case and
see in Chapter~10 of~\cite{HFT} the formulas
for the Poisson kernel of the Brownian motion); however,
it is unclear to us how to prove that. In any case, 
\eqref{eq_SRW_Hoelder} is enough for our needs.

Next, we collect several technical facts we need in 
Section~\ref{s_toolbox}, and then prove our main
results in Section~\ref{s_proofs}. Also, in the end
of the paper we include a brief summary 
of notations, for reader's convenience.

\section{The toolbox}
\label{s_toolbox}
We collect here some facts needed for the proof of our main results. 
These facts are either directly available in the literature,
or can be rapidly deduced from known results.
Unless otherwise stated, we work in~$\Z^d$, $d\geq 2$.

We need first to recall some basic definitions
related to simple random walks in higher dimensions.
For $d\geq 3$ let~$G(x,y)=\IE_x\sum_{k=0}^\infty\1{S_k=y}$ 
denote the Green's function
(i.e., the mean number of visits to~$y$ starting from~$x$),
and abbreviate $G(y):=G(0,y)$.
For a finite set $A\subset\Z^d$ and $x,y\in A\setminus\partial A$
 define
\[
 G_A(x,y) = \IE_x \sum_{k=0}^{\tau_1(\partial A)-1} \1{S_k=y}
\]
to be the mean number of visits to~$y$ starting from~$x$
before hitting~$\partial A$
(since~$A$ is finite, this definition makes sense for all dimensions).
For $x\in A$ denote the \emph{escape probability} from~$A$
by $\Es_A(x)=\IP_x[\tau_1(A)=\infty]$.
The capacity of a finite set $A\subset\Z^d$
is defined by
\[
 \capa(A) = \sum_{x\in A} \Es_A(x).
\]
As for the capacity of a $d$-dimensional ball,
observe that Proposition~6.5.2 of~\cite{LL10} implies 
(recall that $d\geq 3$)
% for some (explicit)
% constants $C_d$, $d\geq 3$, we have
\begin{equation}
\label{capacity_ball_3}
 \capa(\B(n)) = \frac{(d-2)\pi^{d/2}}{\Gamma(d/2)d} 
 n^{d-2} + O(n^{d-3}).
\end{equation}
We also define the harmonic measure on~$A$ by $\hm_A(\cdot) 
= \frac{\Es_A(\cdot)}{\capa(A)}$.

Next, in Section~\ref{s_rw_annuli} we first collect some results
for simple random walks on annuli, namely: 
inside/outside exit probabilities 
(Lemmas~\ref{l_exit_balls} and~\ref{l_annulus_escape}), 
estimates on Green's functions restricted to
an annulus (Lemma~\ref{l_G_annulus}) and on exit
measures (Lemma~\ref{l_G_entrance}); also, we study 
escape probabilities from the inner boundary of an annulus
to the outer one in Lemma~\ref{l_escape_to}.
Then, we collect some facts related to the conditioned
walk~$\s$: an expression for probability of not hitting
a large ball, distant from the origin (Lemma~\ref{l_escape_from_ball}),
a formula for the (transient) capacity of such a ball
(Lemma~\ref{l_cap_distantball}), and a result that states
that the walks~$S$ and~$\s$ are almost indistinguishable
on ``distant'' sets (Lemma~\ref{l_relation_S_hatS}).
In Section~\ref{s_SLT} we first review the method 
of soft local times that permits us to construct
sequences of excursions of simple random walks and
random interlacements, and then
prove a result on large deviation for soft local times
(Lemma~\ref{l_LD_SLT}),
 using some machinery from the theory of empirical processes.
Next, in Lemma~\ref{l_consistency_SLT} we state 
another fact related to soft local times,
and, finally, we recall a result (Lemma~\ref{l_number_exc_torus})
 that permits us to control the number of excursions 
of simple random walk on torus.

\subsection{Basic estimates for the random walk on the annulus}
\label{s_rw_annuli}
Here, we formulate several basic facts about simple random walks 
on annuli.

\begin{lem}
\label{l_exit_balls}
\begin{itemize}
\item[(i)] For all $x \in \Z^2$ and $R>r>0$ 
such that
 $x \in \B(R)\setminus \B(r)$  we have
\begin{equation}
 \label{nothit_r_dim2}
\IP_x\big[\tau_1(\partial \B(R))< \tau_1(\B(r))\big] = 
\frac{\ln\|x\|-\ln r+O(r^{-1})}{\ln R-\ln r},
\end{equation}
as $r,R\to \infty$. 
\item[(ii)] For all $x \in \Z^d$, $d\geq 3$, and $R>r>0$ 
such that
 $x \in \B(R)\setminus \B(r)$  we have
\begin{equation}
 \label{nothit_r_dim3}
\IP_x\big[\tau_1(\partial \B(R))<\tau_1(\B(r))\big] = 
\frac{r^{-(d-2)}- \|x\|^{-(d-2)}+O(r^{-(d-1)})}
{r^{-(d-2)} - R^{-(d-2)}},
\end{equation}
as $r,R\to \infty$. 
\end{itemize}
\end{lem}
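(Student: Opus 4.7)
The natural approach is the optional stopping theorem applied to a suitably chosen harmonic function of the walk at $\tau:=\tau_1(\partial\B(R))\wedge\tau_1(\B(r))$. Up to time~$\tau$ the walk stays in $\B(R)\setminus\B(r)$ and so never touches the origin, so harmonicity \emph{outside} $0$ is enough. Both the potential kernel~$a$ (in $d=2$) and the Green's function~$G$ (in $d\geq 3$) are such functions, with a clean asymptotic expansion of leading order $\ln\|x\|$ and $\|x\|^{-(d-2)}$ respectively; these are exactly the quantities appearing on the right-hand sides of \eqref{nothit_r_dim2} and \eqref{nothit_r_dim3}.

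For part~(i), set $p:=\IP_x[\tau_1(\partial\B(R))<\tau_1(\B(r))]$. The martingale property~\eqref{a_harm} together with the boundedness of $a(S_{n\wedge\tau})$ yield, by optional stopping,
\begin{equation*}
a(x) \;=\; p\cdot\IE_x\bigl[a(S_\tau)\mid\tau_1(\partial\B(R))<\tau_1(\B(r))\bigr] + (1-p)\cdot\IE_x\bigl[a(S_\tau)\mid\tau_1(\B(r))\leq\tau_1(\partial\B(R))\bigr].
\end{equation*}
On the first event $\|S_\tau\|=R+O(1)$, while on the second the walk enters $\B(r)$ at a site of norm in $[r-1,r]$. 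Plugging these ranges into \eqref{formula_for_a} replaces $a(S_\tau)$ by $\tfrac{2}{\pi}\ln R+C+O(R^{-1})$, respectively $\tfrac{2}{\pi}\ln r+C+O(r^{-1})$, where $C=\pi^{-1}(2\gamma+\ln 8)$. The constant cancels against the analogous expansion of $a(x)$ from \eqref{formula_for_a}, and solving for $p$ produces \eqref{nothit_r_dim2}.

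Part~(ii) is entirely parallel, with $G(\cdot)$ playing the role of $a(\cdot)$: transience makes $G$ harmonic off the origin, and Theorem~4.3.1 of~\cite{LL10} supplies the expansion $G(x)=a_d\|x\|^{-(d-2)}+O(\|x\|^{-d})$ for an explicit constant~$a_d$. The same optional-stopping identity, with the two exit values expanded to $a_d R^{-(d-2)}+O(R^{-(d-1)})$ and $a_d r^{-(d-2)}+O(r^{-(d-1)})$, yields \eqref{nothit_r_dim3} after rearrangement. The only step requiring a little care in either part is the bookkeeping of errors: the overshoot of size at most $1$ at first entry into $\B(r)$ contributes the $O(r^{-1})$, resp.\ $O(r^{-(d-1)})$, term that the statement places in the numerator; the analogous outer-boundary error is weighted by $p$ and is of smaller order, while the $O(\|x\|^{-2})$, $O(\|x\|^{-d})$ corrections in the expansions of $a(x)$, $G(x)$ are dominated in the same way. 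I do not foresee any genuine obstacle; one simply has to divide by $\ln R-\ln r$, resp.\ $r^{-(d-2)}-R^{-(d-2)}$, taking care not to inflate the error terms.
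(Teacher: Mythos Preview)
Your proposal is correct and follows exactly the approach the paper indicates: optional stopping applied to the martingales $a(S_{n\wedge\tau_0(0)})$ in two dimensions and $G(S_{n\wedge\tau_0(0)})$ in higher dimensions, with the asymptotics~\eqref{formula_for_a} and Theorem~4.3.1 of~\cite{LL10} supplying the error terms. The paper's own proof is in fact more terse than yours, simply pointing to these ingredients and to Lemma~3.1 of~\cite{CPV15} for part~(i).
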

\begin{proof}
Essentially, this comes out of an application of the 
Optional Stopping Theorem to the martingales $a(S_{n\wedge \tau_0(0)})$
(in two dimensions) or $G(S_{n\wedge \tau_0(0)})$
(in higher dimensions).
See Lemma~3.1 of~\cite{CPV15} for the part~(i). As for the part~(2), 
apply the same kind of argument and use the expression
for the Green's function e.g.\ from Theorem~4.3.1 of~\cite{LL10}.
\end{proof}

\begin{lem}
\label{l_annulus_escape}
Let~$d\geq 2$ and let $c>1$ be fixed. Then for all large enough~$n$
we have for all $v\in (\B(cn)\setminus \B(n))\cup \partial\B(n)$
\begin{equation}
\label{annulus_escape}
 c_1\frac{\|v\|-n+1}{n} \leq 
\IP_v\big[\tau_1(\partial\B(cn))<\tau_1(\B(n))\big]
    \leq c_2\frac{\|v\|-n+1}{n}.
\end{equation}
with $c_{1,2}$ depending on~$c$.
\end{lem}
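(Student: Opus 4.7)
The plan is to prove~\eqref{annulus_escape} by applying the Optional Stopping Theorem, as in Lemma~\ref{l_exit_balls}, splitting into a bulk regime where that lemma applies directly and a boundary regime near $\partial\B(n)$ that requires a short detour.

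First, in the bulk regime $v\in\B(cn)\setminus\B(n)$ with $\|v\|-n\geq K$ for a suitable constant $K=K(c)$, I apply Lemma~\ref{l_exit_balls} with $r=n$ and $R=cn$: it gives the escape probability as $\frac{\ln(\|v\|/n)+O(1/n)}{\ln c}$ in dimension~$2$, and $\frac{1-(n/\|v\|)^{d-2}+O(1/n)}{1-c^{-(d-2)}}$ in dimension $d\geq 3$. Since $\|v\|/n\in(1,c]$ is bounded, a Taylor expansion gives $\ln(\|v\|/n)\asymp (\|v\|-n)/n$ and $1-(n/\|v\|)^{d-2}\asymp (\|v\|-n)/n$ with constants depending only on $c$ and $d$; picking $K$ large enough absorbs the $O(1/n)$ error and both inequalities in~\eqref{annulus_escape} follow.

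The boundary regime, $v\in\partial\B(n)$ or $v\in\B(cn)\setminus\B(n)$ with $\|v\|-n<K$, is more delicate because $(\|v\|-n+1)/n$ is of order at most $1/n$. For the upper bound, I use the strong Markov property at the first step from $v$: if it lies in $\B(n)$ the event fails, otherwise the walk sits at a neighbour $y\notin\B(n)$ with $\|y\|\leq n+1$, from which Lemma~\ref{l_exit_balls} bounds the remaining escape probability by a quantity comparable to $(\|y\|-n+1)/n$, which matches the claim up to lattice-geometric constants relating $\|y\|-n+1$ to $\|v\|-n+1$. For the lower bound, Lemma~\ref{l_exit_balls}'s additive $O(1/n)$ error dominates its main term when $\|v\|-n$ is small, so a direct use is not enough; instead I argue by a specific-path detour. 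Starting from $v$ (first choosing an outside neighbour when $v\in\partial\B(n)$), the walk follows a fixed outward lattice path of length $K_0+1$ with probability $(2d)^{-(K_0+1)}$, arriving at some $y^{*}$ with $\|y^{*}\|\geq n+K_0$ while staying outside $\B(n)$ throughout; then the bulk analysis from $y^{*}$ yields an escape probability of order $K_0/n$, so $\IP_v\geq C/n$, which dominates $c_1(\|v\|-n+1)/n$ for a suitable $c_1$.

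The main obstacle is the lower bound in the boundary regime, since Lemma~\ref{l_exit_balls}'s $O(1/n)$ additive error is of the same order as the desired estimate when $\|v\|$ is close to $n$: a clean lower bound cannot be extracted from the lemma alone, and the specific-path detour to the bulk — where Lemma~\ref{l_exit_balls} has sharp error — is the natural workaround. In dimension $d\geq 3$ the argument runs in parallel, using the Green's function in place of the potential kernel.
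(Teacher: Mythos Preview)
Your approach is exactly the paper's: apply Lemma~\ref{l_exit_balls} in the bulk regime $\|v\|-n\geq K$, and handle the boundary layer separately, with the paper's terse ``pay a constant price to force the walk out'' being precisely your specific-path detour for the lower bound. The paper's proof is a two-line sketch that defers the near-boundary details to external references (Lemma~6.3.4 and Proposition~6.4.1 of~\cite{LL10} for $d=2$, Lemma~8.5 of~\cite{SLT} for $d\geq 3$), so you are giving more detail than the paper itself does.

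There is, however, a genuine gap in your upper bound for the boundary regime. You claim that after one step to a neighbour $y\notin\B(n)$, Lemma~\ref{l_exit_balls} controls the escape probability by $O((\|y\|-n+1)/n)$, and that this ``matches the claim up to lattice-geometric constants relating $\|y\|-n+1$ to $\|v\|-n+1$''. But no such uniform constant exists: for $v=(1,n-1)\in\partial\B(n)$ (with $n$ an integer) one has $\|v\|-n+1=\sqrt{1+(n-1)^2}-(n-1)\asymp 1/n$, so the target upper bound is $c_2(\|v\|-n+1)/n\asymp 1/n^2$, whereas any neighbour $y\notin\B(n)$ has $\|y\|-n+1\in(1,2]$ and your bound yields only $O(1/n)$. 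The additive $O(r^{-1})$ error in Lemma~\ref{l_exit_balls} swamps the main term at this scale, so the one-step argument cannot close the gap. The sharp upper bound for such ``recessed'' boundary points --- equivalently, the statement $\hm_{\B(n)}(v)\asymp(\|v\|-n+1)/n$ --- is a nontrivial fact, which is why the paper points to the Lawler--Limic reference rather than arguing directly.
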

\begin{proof}
This follows from Lemma~\ref{l_exit_balls} together
with the observation that \eqref{nothit_r_dim2}--\eqref{nothit_r_dim3}
start working when $\|x\|-n$ become larger than a constant
(and, if~$x$ is too close to~$\B(n)$, we just pay a constant
price to force the walk out).
See also Lemma~8.5 of~\cite{SLT} (for $d\geq 3$) 
and Lemma~6.3.4 together with Proposition~6.4.1 
of~\cite{LL10} (for $d=2$).
\end{proof}

\begin{lem}
\label{l_G_annulus}
Let~$d\geq 2$.
Fix $c>1$ and $\delta>0$ such that $1+\delta<c-\delta$,
and abbreviate $A_n=(\B(cn)\setminus\B(n))\cup \partial\B(n)$. 
Then, there exist positive constants $c_3,c_4$
(depending only on~$c$, $\delta$, and the dimension) such that 
 for all~$u_{1,2}\in\Z^d$ with 
$(1+\delta)n< \|u_{1,2}\| < (c-\delta)n$ 
and $\|u_1-u_2\|\geq \delta n$
it holds that 
$c_3n^{-(d-2)} \leq G_{A_n}(u_1,u_2) \leq c_4 n^{-(d-2)}$.
\end{lem}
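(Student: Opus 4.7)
My plan is to establish the upper and lower bounds of the lemma by separate arguments.

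For the upper bound, in $d \geq 3$ the monotonicity $G_{A_n}(u_1, u_2) \leq G(u_1, u_2)$ combined with $G(x) \sim c_d \|x\|^{-(d-2)}$ (Theorem~4.3.1 of~\cite{LL10}) and $\|u_1 - u_2\| \geq \delta n$ immediately yields $G_{A_n}(u_1, u_2) \leq C n^{-(d-2)}$. For $d = 2$, I would apply the optional stopping theorem to the martingale $a(S_k - u_2) - \sum_{j<k}\1{S_j = u_2}$ at the stopping time $\tau_1(\partial A_n)$, yielding
\[
G_{A_n}(u_1, u_2) = \IE_{u_1}\bigl[a(S_{\tau_1(\partial A_n)} - u_2)\bigr] - a(u_1 - u_2).
\]
Since both $\|S_{\tau_1(\partial A_n)} - u_2\|$ and $\|u_1 - u_2\|$ lie in $[\delta n, (c+1)n]$, by~\eqref{formula_for_a} both terms on the right are $\frac{2}{\pi}\log n + O(1)$, so their difference is $O(1)$.

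For the lower bound I would use the strong Markov decomposition
\[
G_{A_n}(u_1, u_2) = \IP_{u_1}\bigl[\tau_0(u_2) < \tau_1(\partial A_n)\bigr] \cdot G_{A_n}(u_2, u_2).
\]
The diagonal factor is handled by monotonicity: since $\B(u_2, \delta n/2) \subset A_n \setminus \partial A_n$, we have $G_{A_n}(u_2, u_2) \geq G_{\B(u_2, \delta n/2)}(u_2, u_2)$, which tends to $G(0) \in (0,\infty)$ in $d \geq 3$ and equals $\frac{2}{\pi} \log n + O(1)$ in $d = 2$. It then remains to show that the hitting probability is $\gtrsim n^{-(d-2)}$ in $d \geq 3$ and $\gtrsim 1/\log n$ in $d = 2$.

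I would first verify this for an auxiliary point $u_1^*$ at distance exactly $\delta n/4$ from $u_2$, placed so that also $\dist(u_1^*, \partial A_n) \geq \delta n/2$. For $d \geq 3$, OST applied to the martingale $G(S_k - u_2)$ at $\tau_0(u_2) \wedge \tau_1(\partial A_n)$ expresses the hitting probability from $u_1^*$ as $G(0)^{-1}\bigl(G(u_1^* - u_2) - \IE_{u_1^*}[G(S_{\tau_1(\partial A_n)} - u_2); \tau_1 < \tau_0]\bigr)$, and the first term $\asymp (\delta n/4)^{-(d-2)}$ dominates the second term $\leq C(\delta n)^{-(d-2)}$ by the factor $4^{d-2}$; in $d = 2$ the same argument with $a$ in place of $G$, together with careful bookkeeping of the $O(1)$ corrections in~\eqref{formula_for_a}, yields a lower bound $\frac{\log 4}{\log n}(1+o(1))$. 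To pass from $u_1^*$ to an arbitrary $u_1$ satisfying the hypothesis, I would iterate Harnack's inequality for the nonnegative function $z \mapsto G_{A_n}(z, u_2)$ (harmonic off $\{u_2\} \cup \partial A_n$) along a chain of $O(1/\delta)$ balls of radius $\asymp \delta n$ lying inside the safe zone $\{z : \dist(z, \partial A_n) \wedge \|z - u_2\| \geq \delta n/8\}$, which yields a constant ratio depending only on $c, \delta, d$.

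The main obstacle is the base case at $u_1^*$ in $d = 2$: the leading $\frac{2}{\pi}\log n$ contributions to $a(u_1^* - u_2)$ and $a(S_{\tau_1(\partial A_n)} - u_2)$ nearly coincide, so the positive lower bound on the hitting probability has to be extracted from the $\frac{2}{\pi}\log 4$ gap between the scales $\delta n/4$ and $\delta n$, an effect of smaller order than the absolute $O(1)$ constants in~\eqref{formula_for_a} and so requiring inequality manipulation rather than plain asymptotic substitution. A potential alternative is to compare $G_{A_n}$ to the Brownian Green's function on the rescaled annulus $\B(c) \setminus \B(1) \subset \R^d$ via an invariance principle, but making this comparison uniform in $u_1, u_2$ is its own technical challenge.
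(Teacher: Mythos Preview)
Your argument is correct. The ``main obstacle'' you flag in $d=2$ is not a real obstacle: the additive constant $\frac{2\gamma+\ln 8}{\pi}$ in~\eqref{formula_for_a} is the \emph{same} in both $a(u_1^*-u_2)$ and in every $a(w-u_2)$ for $w\in\partial A_n$, so it cancels in the optional-stopping identity. What remains is
\[
\IP_{u_1^*}\big[\tau_0(u_2)<\tau_1(\partial A_n)\big]
\;\geq\; \frac{\frac{2}{\pi}\ln 4+O(n^{-2})}{\frac{2}{\pi}\ln n+O(1)}
\;\geq\; \frac{c'}{\ln n},
\]
with no delicate inequality work required; the only error terms left are the $O(\|x\|^{-2})$ corrections, not the $O(1)$ constant.

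The paper's proof follows the same lines but is more compressed. It invokes Proposition~4.6.2 of~\cite{LL10} (which is exactly your optional-stopping identity, in both dimensions) to obtain $G_{A_n}(v,u_2)\asymp n^{-(d-2)}$ directly at a point~$v$ with $\|v-u_2\|\approx\delta' n$, where $\delta'\leq\delta$ is a free small parameter; taking~$\delta'$ small trades your~$\ln 4$ for~$\ln(\delta/\delta')$ and makes any constant bookkeeping trivially unnecessary. The transfer to an arbitrary~$u_1$ is done by asserting that the walk reaches $\B(u_2,\delta' n)$ from~$u_1$ before~$\partial A_n$ with uniformly positive probability --- the same geometric content as your Harnack chain. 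Your detour through the factorization $G_{A_n}(u_1,u_2)=\IP_{u_1}[\tau_0(u_2)<\tau_1(\partial A_n)]\cdot G_{A_n}(u_2,u_2)$ is valid but slightly longer than working with $G_{A_n}(v,u_2)$ at an intermediate~$v$ directly.
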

\begin{proof}
Indeed, we first notice that Proposition~4.6.2 of~\cite{LL10}
(together with the estimates on the Green's function
and the potential kernel, Theorems~4.3.1 of~\cite{LL10}
and~\eqref{formula_for_a})
 imply that 
$G_{A_n}(v,u_2) \asymp n^{-(d-2)}$ for all $d\geq 2$, 
where $\delta' n -1<\|v-u_2\|\leq \delta' n$,
 and~$\delta'\leq \delta$ is a small enough constant.
Then, use the fact that from any~$u_1$ as above,
the simple random walk comes from~$u_1$ to $\B(u_2,\delta' n)$
without touching~$\partial A_n$
with uniformly positive probability.
\end{proof}

\begin{lem}
\label{l_G_entrance}
Let~$d\geq 2$.
Let~$c,\delta,A_n$ be as in Lemma~\ref{l_G_annulus},
and assume that $(1+\delta)n\leq \|x\|\leq (c-\delta)n$,
$u\in\partial\B(n)$. Then, for some positive constants $c_5,c_6$
(depending only on~$c$, $\delta$, and the dimension) we have
\begin{equation}
\label{c/n_Poisson_general}
\frac{c_5}{n^{d-1}} \leq  H_{A_n}(x,u) \leq \frac{c_6}{n^{d-1}}.
\end{equation}
\end{lem}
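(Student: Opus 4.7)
The plan is to combine the first-step decomposition of the Poisson kernel with a single ``intermediate-shell crossing'' argument, reducing everything to Lemmas~\ref{l_annulus_escape} and~\ref{l_G_annulus}. I will establish both bounds simultaneously by proving $H_{A_n}(x,u)\asymp n^{-(d-1)}$.

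Conditioning on the last position before hitting $\partial A_n$ and using the Markov property yields the standard identity
\begin{equation*}
H_{A_n}(x,u) \;=\; \frac{1}{2d} \sum_{\substack{v\sim u \\ v\in A_n\setminus\partial A_n}} G_{A_n}(x,v).
\end{equation*}
For $u\in\partial\B(n)$, the neighbors $v$ appearing in the sum are exactly the ``external'' ones (those with $\|v\|>n$, hence $\|v\|\in(n,n+1]$); there are between $1$ and $2d$ such $v$, and for $n$ large every one of them lies in $A_n\setminus\partial A_n$ (their neighbors remain in $A_n$ because they are far from $\partial\B(cn)$). Using reversibility $G_{A_n}(x,v)=G_{A_n}(v,x)$, it suffices to prove $G_{A_n}(v,x)\asymp n^{-(d-1)}$ uniformly in such $v$.

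Introduce the intermediate shell $V'=\partial\B((1+\eps)n)$. By construction $V'\subset A_n\setminus\partial A_n$, and since $\|v\|\le n+1<(1+\eps)n$ while $\|x\|\ge(1+2\eps)n>(1+\eps)n$, every lattice path from $v$ to $x$ must first hit $V'$. Strong Markov at $\tau_1(V')$ therefore gives
\begin{equation*}
G_{A_n}(v,x) \;=\; \IE_v\!\left[\1{\tau_1(V')<\tau_1(\partial A_n)}\; G_{A_n}\bigl(S_{\tau_1(V')},\,x\bigr)\right],
\end{equation*}
and I estimate the two factors separately. The indicator event coincides with $\{\tau_1(\partial\B((1+\eps)n))<\tau_1(\B(n))\}$, so Lemma~\ref{l_annulus_escape} (applied to the sub-annulus with $c$ replaced by $1+\eps$) yields $\IP_v[\tau_1(V')<\tau_1(\partial A_n)] \asymp (\|v\|-n+1)/n \asymp 1/n$. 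For any $w\in V'$, applying Lemma~\ref{l_G_annulus} with the margin $\delta=\eps/2$ is legitimate: the assumption $c(1-\eps)>1+2\eps$ guarantees that both $\|w\|=(1+\eps)n$ and $\|x\|$ lie in $\bigl((1+\eps/2)n,\,(c-\eps/2)n\bigr)$, and $\|w-x\|\ge\|x\|-\|w\|\ge\eps n\ge(\eps/2)n$. Hence $G_{A_n}(w,x)\asymp n^{-(d-2)}$ uniformly in $w\in V'$, and multiplying the two estimates gives $G_{A_n}(v,x)\asymp n^{-(d-1)}$, which finishes the argument.

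The only real difficulty is the bookkeeping: one must choose $V'$ so that it separates $v$ from $x$ while simultaneously leaving $x$ deep enough in the annulus for Lemma~\ref{l_G_annulus} to apply with a uniform margin. The hypothesis $c(1-\eps)>1+2\eps$ is exactly what provides room to fit $(1+\eps)n$ strictly between $n+1$ and $(1+2\eps)n$ while keeping $x$ in the ``nice'' middle zone; no substantial analytic obstacle arises.
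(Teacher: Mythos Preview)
Your argument is correct and is essentially the paper's own proof, just unpacked: the paper invokes Lemma~6.3.6 of~\cite{LL10} to write
\[
H_{A_n}(x,u)=\sum_{z\in\partial\B((1+\delta)n)} G_{A_n}(z,x)\,\IP_u\big[S_{\tau_1(B)}=z\big],
\quad B=\partial\B(n)\cup\partial\B((1+\delta)n),
\]
and then bounds $G_{A_n}(z,x)\asymp n^{-(d-2)}$ and $\sum_z\IP_u[S_{\tau_1(B)}=z]\asymp n^{-1}$ via Lemmas~\ref{l_G_annulus} and~\ref{l_annulus_escape}; your last-step identity plus strong Markov at~$V'$ reproduces exactly this decomposition.

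One bookkeeping slip: you are proving Lemma~\ref{l_G_entrance}, whose hypotheses are $(1+\delta)n\le\|x\|\le(c-\delta)n$ and $1+\delta<c-\delta$, but you import the parameter~$\eps$ and the assumption $c(1-\eps)>1+2\eps$ from Proposition~\ref{p_SRW_Hoelder}. Replace your intermediate shell by $\partial\B((1+\delta/2)n)$ and apply Lemma~\ref{l_G_annulus} with margin~$\delta/4$; then everything goes through under the lemma's own hypotheses.
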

Observe that, since $\IP_x[\tau_1(\B(n))<\tau_1(\partial\B(cn))]$
is bounded away from~$0$ and~$1$, the above result also
holds for the harmonic measure~$\hm_{\B(n)}(\cdot)$
(notice that the harmonic measure is a linear combination
of conditional entrance measures).
\begin{proof}
This can be proved essentially in the same way as in
 Lemma~6.3.7 of~\cite{LL10}. 
Namely, denote $B=\partial\B(n)\cup \partial\B((1+\delta)n)$ 
and use Lemma~6.3.6 of~\cite{LL10}
together with Lemmas~\ref{l_annulus_escape}
and~\ref{l_G_annulus} to write
(with $c_2=c_2(\delta)$, as in Lemma~\ref{l_annulus_escape})
\begin{align*}
 H_{A_n}(x,u) &= \sum_{z\in \partial\B((1+\delta)n)} G_{A_n}(z,x) 
  \IP_u\big[S_{\tau_1(B)}=z\big]\\
 & \leq c_4 n^{-(d-2)}\sum_{z\in \partial\B((1+\delta)n)}
\IP_u\big[S_{\tau_1(B)}=z\big]\\
 & \leq c_4 n^{-(d-2)} \times \frac{c_2}{n},
\end{align*}
obtaining the upper bound in~\eqref{c/n_Poisson_general}.
The lower bound is obtained in the same way
(using the lower bound on~$G_{A_n}$ from Lemma~\ref{l_G_annulus}).
\end{proof}

\begin{lem}
\label{l_escape_to}
Let $k>1$ and $x\in\partial \B(n)$.
Then, as $n\to\infty$ 
%(and uniformly in~$k$)
\begin{equation}
\label{escape_to_k}
 \IP_x\big[\tau_1(\partial\B(k+n))< \tau_1(\B(n))\big] 
= 
  \begin{cases}
    \displaystyle\frac{\hm_{\B(n)}(x)}
       {\frac{2}{\pi}\ln\big(1+\frac{k}{n}\big)+O(n^{-1})},
       & \text{for }d=2,\\
    \displaystyle\frac{\capa(\B(n))\hm_{\B(n)}(x)}
    {1-\big(1+\frac{k}{n}\big)^{-(d-2)}+O(n^{-1})}
     \vphantom{\int\limits^{A^B}}, & \text{for }d\geq 3.
  \end{cases}
\end{equation}
\end{lem}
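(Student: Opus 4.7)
The plan is to apply the strong Markov property at $\tau_1(\partial\B(n+k))$, reducing the target probability to one involving escape to a much larger ball $\partial\B(N)$, for which Lemma~\ref{l_exit_balls} supplies an explicit formula. Since $k>1$, any $z\in\partial\B(n+k)$ lies strictly between $\B(n)$ and $\partial\B(N)$ (for $N>n+k$), and the strong Markov property gives
\begin{equation*}
\IP_x\!\left[\tau_1(\partial\B(N))<\tau_1(\B(n))\right] = \IP_x[E]\cdot\IE_x\!\left[\varphi(S_{\tau_1(\partial\B(n+k))})\,\big|\,E\right],
\end{equation*}
where $E=\{\tau_1(\partial\B(n+k))<\tau_1(\B(n))\}$ and $\varphi(z)=\IP_z[\tau_0(\partial\B(N))<\tau_0(\B(n))]$. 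Because $\|z\|=n+k+O(1)$ uniformly in $z\in\partial\B(n+k)$, Lemma~\ref{l_exit_balls} supplies $\varphi(z)=\frac{\ln(1+k/n)+O(n^{-1})}{\ln N-\ln n}$ for $d=2$ and $\varphi(z)=\frac{n^{-(d-2)}-(n+k)^{-(d-2)}+O(n^{-(d-1)})}{n^{-(d-2)}-N^{-(d-2)}}$ for $d\ge 3$, uniformly in $z$.

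For $d\ge 3$, I would then send $N\to\infty$. By transience, the left-hand side converges to $\Es_{\B(n)}(x)=\capa(\B(n))\hm_{\B(n)}(x)$, and the conditional expectation on the right converges to $1-(1+k/n)^{-(d-2)}+O(n^{-1})$. Dividing yields the $d\ge 3$ case of~\eqref{escape_to_k}.

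For $d=2$ the walk is recurrent, so letting $N\to\infty$ naively returns $0=0$. Instead, I would establish
\begin{equation*}
\IP_x\!\left[\tau_1(\partial\B(N))<\tau_1(\B(n))\right] = \frac{(\pi/2)\,\hm_{\B(n)}(x)\bigl(1+o(1)\bigr)}{\ln N-\ln n} \quad\text{as } N\to\infty,
\end{equation*}
and substitute into the decomposition. The intended derivation of this asymptotic is via the path-reversal identity
\begin{equation*}
\IP_x\!\left[\tau_1(\partial\B(N))<\tau_1(\B(n)),\,S_{\tau_1(\partial\B(N))}=y\right] = \IP_y\!\left[\tau_1(\B(n))<\tau_1(\partial\B(N)),\,S_{\tau_1(\B(n))}=x\right]
\end{equation*}
(valid for symmetric SRW, since each lattice path from $x$ to $y$ staying in the annulus has the same probability as its time-reversal), then summing over $y\in\partial\B(N)$ and using the defining property $\hm_{\B(n)}(x)=\lim_{\|y\|\to\infty}\IP_y[S_{\tau_1(\B(n))}=x]$ together with Lemma~\ref{l_exit_balls}(i) (to control $\IP_y[\tau_1(\B(n))<\tau_1(\partial\B(N))]$ for $y$ near $\partial\B(N)$).

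The main obstacle is the $d=2$ case: isolating the sharp constant $\pi/2$ with the announced $O(n^{-1})$ error. Applied site-by-site, the $O(n^{-1})$ error in Lemma~\ref{l_exit_balls}(i), summed over the $\sim N$ points of $\partial\B(N)$, would contribute $O(N/n)$ and swamp the leading term whenever $N\gg n$. A sharper bound is therefore required, either by extracting sign-cancellations in the per-site errors (exploiting that the entry distribution from outside $\B(n)$ has $\|S\|$ strictly below $n$) or, alternatively, by a direct martingale argument using the potential kernel $a$ on the annulus $\B(N)\setminus\B(n)$ which bypasses the boundary sum altogether.
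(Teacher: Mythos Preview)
Your decomposition via the strong Markov property at $\tau_1(\partial\B(n+k))$ and the use of Lemma~\ref{l_exit_balls} for the conditional continuation are exactly the paper's framework. For $d\ge 3$ your argument is essentially identical to the paper's: sending the outer radius to infinity and identifying the limit as $\Es_{\B(n)}(x)=\capa(\B(n))\hm_{\B(n)}(x)$ is precisely what Proposition~6.4.2 of~\cite{LL10} (which the paper cites) encodes.

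The gap is in $d=2$, and it is real. You correctly reduce the problem to the asymptotic
\[
\IP_x\big[\tau_1(\partial\B(N))<\tau_1(\B(n))\big]\;\sim\;\frac{(\pi/2)\,\hm_{\B(n)}(x)}{\ln N-\ln n},
\]
and you are also right that a $(1+o_N(1))$ version suffices, since the $O(n^{-1})$ in the final denominator comes solely from~$\varphi$. But your path-reversal route does not close: after reversal you must both show that the conditional entrance law at $\B(n)$ is $\hm_{\B(n)}(\cdot)(1+o(1))$ uniformly over $y\in\partial\B(N)$, and evaluate $\sum_{y\in\partial\B(N)}\IP_y[\tau_1(\B(n))<\tau_1(\partial\B(N))]$. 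By the very same reversal the latter sum equals $\sum_{x'\in\partial\B(n)}\IP_{x'}[\tau_1(\partial\B(N))<\tau_1(\B(n))]$, so you land back on the quantity you started from. Your diagnosis of the error blow-up is a symptom of this circularity, not a separate obstacle.

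The paper breaks the circle by a last-exit decomposition rather than a first-entrance one. It fixes $N=n^2$, places an auxiliary point $y$ at scale $n^3$ inside the larger annulus $A'=(\B(n^5)\setminus\B(n))\cup\partial\B(n)$, and uses Lemma~6.3.6 of~\cite{LL10} to write
\[
H_{A'}(y,x)=\sum_{z\in\partial\B(n^2)}G_{A'}(z,y)\,
\IP_x\big[S_{\tau_1(\partial\B(n^2))}=z,\ \tau_1(\partial\B(n^2))<\tau_1(\B(n))\big].
\]
Two independent inputs then pin down the constant: Proposition~4.6.2 of~\cite{LL10} gives $G_{A'}(z,y)=\frac{1}{\pi}\ln n\,(1+O(n^{-1}))$ uniformly over $z\in\partial\B(n^2)$, and the entrance-measure estimate~(31) of~\cite{CPV15} gives $H_{A'}(y,x)=\tfrac12\hm_{\B(n)}(x)(1+O(n^{-1}))$. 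Dividing yields the key identity~\eqref{goton2}. Your ``direct martingale argument using~$a$'' is in the right spirit, but the specific missing ingredient is a quantitative entrance-law approximation of the type~(31) of~\cite{CPV15}; without it the constant $(\pi/2)\hm_{\B(n)}(x)$ cannot be identified.
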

We stress that the $O$'s in the above expressions depend only on~$n$,
not on~$k$.
\begin{proof}
% Let us first assume that $k\leq n$; afterwards we will
% explain how to generalize it to arbitrary~$k$.
Consider first the case $d\geq 3$. 
It is enough to prove it for the case $k\leq n^2/2$,
since for $k>n^2/2$ the second term in the denominator
is already~$O(n^{-1})$.
Now, Proposition~6.4.2 of~\cite{LL10} implies that,
for any $x\in \partial\B(n)$ and $m>n$
\[
 \Es_{\B(n)}(x) = \capa(\B(n))\hm_{\B(n)}(x)
 = \IP_x\big[\tau_1(\partial\B(m))<\tau_1(\B(n))\big]
\Big(1-O\Big(\frac{n^{d-2}}{m^{d-2}}\Big)\Big),
\]
% Let us first assume that $k\leq n$; afterwards we will
% explain how to generalize it to arbitrary~$k$.
so
\begin{equation}
\label{escape_to_n2}
 \IP_x\big[\tau_1(\partial\B(n^2))<\tau_1(\B(n))\big]
   = \capa(\B(n))\hm_{\B(n)}(x)\big(1+O(n^{-(d-2)})\big).
\end{equation}
On the other hand, with~$\nu$ being the entrance 
measure to~$\partial\B(n+k)$ starting
 from~$x$ and conditioned on the 
event $\big\{\tau_1(\partial\B(n+k))<\tau_1(\B(n))\big\}$,
we write using Lemma~\ref{l_exit_balls}~(ii)
\begin{align*}
 \lefteqn{\IP_x\big[\tau_1(\partial\B(n^2))<\tau_1(\B(n))\big]}\\
 &=
   \IP_x\big[\tau_1(\partial\B(n+k))<\tau_1(\B(n))\big]
   \IP_\nu\big[\tau_1(\partial\B(n^2))<\tau_1(\B(n))\big]\\
  &= \IP_x\big[\tau_1(\partial\B(n+k))<\tau_1(\B(n))\big]
   \Big(1-\Big(1+\frac{k}{n}\Big)^{-(d-2)}+O(n^{-1})\Big),
\end{align*}
and this, together with~\eqref{escape_to_n2},
implies~\eqref{escape_to_k} in higher dimensions.

Now, we deal with the case $d=2$. 
Assume first that $k\leq n^2/2$. Let~$y$
be such that $n^3<\|y\|\leq n^3+1$; also,
denote $A'=(\B(n^5)\setminus\B(n))\cup \partial\B(n)$. 
For any $z\in\partial\B(n^2)$ we can write
using Proposition~4.6.2~(b) together Lemma~\ref{l_exit_balls}~(i)
(starting from~$z$,
the walk reaches~$\B(n)$ before~$\B(n^5)$ with probability
$\frac{3}{4}(1+O(n^{-1}))$)
\begin{align}
G_{A'}(z,y) &=  \big(1+O(n^{-1})\big)
\Big(\frac{3}{4}\times \frac{2}{\pi} \ln n^3
 + \frac{1}{4}\times \frac{2}{\pi}\ln n^5
 - \frac{2}{\pi}\ln n^3\Big)\nonumber\\
 &= \frac{1}{\pi}\big(1+O(n^{-1})\big)\ln n.
\label{GA'}
\end{align}
% Abbreviate $\mu(z)=H_{\B(n^3)}(0,z)$ for $z\in\partial\B(n^3)$.
Next, Lemma~6.3.6 of~\cite{LL10}
% together with Proposition~6.4.5 of~\cite{LL10}
 implies that 
\begin{align}
 H_{A'}(y,x) &= \sum_{z\in\partial\B(n^2)}
  G_{A'}(z,y) \IP_x\big[S_{\tau_1(\partial\B(n^2))}=z,
   \tau_1(\partial\B(n^2))<\tau_1(\B(n))\big]\nonumber\\
 &= \IP_x\big[\tau_1(\partial\B(n^2))<\tau_1(\B(n))\big]
\sum_{z\in\partial\B(n^2)}
  G_{A'}(z,y) \mu(z) ,
% \Big(1+O\Big(\frac{\ln n}{n^2}\Big)\Big).
\label{eqL636}
\end{align}
where $\mu$ is the entrance measure to~$\partial\B(n^2)$ starting
 from~$x$, conditioned on the 
event $\big\{\tau_1(\partial\B(n^2))<\tau_1(\B(n))\big\}$.
% To deal with the summation in the right-hand side of~\eqref{eqL636},
% fix some~$v$ such that $n^2<\|v\|\leq n^2+1$, and observe
% that, again by Proposition~6.4.5 of~\cite{LL10},
% for any $z\in\partial\B(n^3)$
% \[
% G 
% \]

Then, by~(31) of~\cite{CPV15} (observe that 
Lemma~\ref{l_exit_balls}~(i) implies that, starting from~$y$,
the walk reaches~$\B(n)$ before~$\B(n^5)$ with probability
$\frac{1}{2}(1+O(n^{-1}))$)
we have
\begin{equation}
\label{Phm}
 H_{A'}(y,x) = \frac{1}{2}\hm_{\B(n)}\big(1+O(n^{-1})\big).
\end{equation}
So, from~\eqref{GA'}, \eqref{eqL636}, and~\eqref{Phm}
we obtain that
\begin{equation}
\label{goton2}
\IP_x\big[\tau_1(\partial\B(n^2))<\tau_1(\B(n))\big]
   = \frac{\hm_{\B(n)}(x)}{\frac{2}{\pi}\ln n}
 \big(1+O(n^{-1})\big).
\end{equation}
% \begin{equation}
% \label{form1_LL}
%  \IP_x[\tau_1(\partial\B(m))<\tau_1(\B(n))]
%    = \frac{\hm_{\B(n)}(x)}{\frac{2}{\pi}\ln m}
%  \Big(1+O\Big(\frac{n\ln m}{m}\Big)\Big),
% \end{equation}
% for all $m\geq 4n$. 
Let~$\nu$ be the entrance measure to~$\partial\B(n+k)$ starting
 from~$x$, conditioned on the 
event $\big\{\tau_1(\partial\B(n+k))<\tau_1(\B(n))\big\}$.
 Using~\eqref{goton2}, we write
\begin{align*}
 \lefteqn{\IP_x\big[\tau_1(\partial\B(n+k))<\tau_1(\B(n))\big]
  \IP_\nu\big[\tau_1(\partial\B(n^2))<\tau_1(\B(n))\big]}\\
 &= \IP_x\big[\tau_1(\partial\B(n^2))<\tau_1(\B(n))\big]
\phantom{*****************}\\
&= \frac{\hm_{\B(n)}(x)}{\frac{2}{\pi}\ln n}
 \big(1+O(n^{-1})\big).
\end{align*}
Since, by Lemma~\ref{l_exit_balls}~(i) we have
% \begin{equation}
% \label{n+k->n2}
\[
 \IP_\nu\big[\tau_1(\partial\B(n^2))<\tau_1(\B(n))\big]
   = \frac{\ln\big(1+\frac{k}{n}\big) + O(n^{-1})}{\ln n},
\]
% \end{equation}
this proves~\eqref{escape_to_k}
in the case $d=2$ and $k\leq n^2/2$.

% Now, let us explain the case of arbitrary $k\geq n$.
% (but that would lead to more complicated notations
% and the proofs will become less intuitive......)
% \textbf{(leave it like this, or write it for all $k$?)}
The case $k > n^2/2$ is easier:
just repeat \eqref{GA'}--\eqref{goton2} with~$k$
on the place of~$n^2$ (so that $n^3$ becomes~$k^{3/2}$
and $n^5$ becomes~$k^{5/2}$).
This concludes the proof of Lemma~\ref{l_escape_to}.
\end{proof}

Let us now come back to the specific case of $d=2$.
We need some facts regarding the conditional walk~$\s$.

\begin{lem}
\label{l_escape_from_ball}
Let $d=2$ and assume that $x\notin \B(y,r)$ and $\|y\|>2r\geq 1$. 
We have 
\begin{equation}
\label{eq_escape_from_ball}
 \IP_x\big[\htau_1(\B(y,r))<\infty\big]
= \frac{\big(a(y)+O(\|y\|^{-1}r)\big)\big(a(y)+a(x)-a(x-y)
+O(r^{-1})\big)}{a(x)\big(2a(y)-a(r)+O(r^{-1}+\|y\|^{-1}r)\big)}.
\end{equation}
\end{lem}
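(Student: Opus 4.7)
My plan is to exploit the fact that the conditioned walk~$\s$ is the Doob $h$-transform of~$S$ with $h=a$ (killed at the origin); this gives, for every finite $A\subset\Z^2\setminus\{0\}$ and every $x\in\Z^2\setminus\{0\}$,
\[
\IP_x\!\left[\htau_1(A)<\infty\right]
\;=\;\frac{1}{a(x)}\,\IE_x\!\left[a(S_{\tau_1(A)})\,\1{\tau_1(A)<\tau_1(\{0\})}\right].
\]
Taking $A=B:=\B(y,r)$, the proof reduces to estimating the right-hand numerator under the plain SRW law.

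The central tool will be optional stopping applied to
\[
\psi(z)\;:=\;a(z)+a(y)-a(z-y),
\]
which is harmonic on $\Z^2\setminus\{0,y\}$; satisfies $\psi(0)=a(y)-a(-y)=0$ by the lattice symmetry $a(-y)=a(y)$; and, most importantly, is \emph{bounded at infinity}, since~\eqref{formula_for_a} gives $a(z)-a(z-y)=\tfrac{2}{\pi}\ln(\|z\|/\|z-y\|)+O(\|z\|^{-2})=O(\|y\|/\|z\|)$, so $\psi(z)\to a(y)$ as $\|z\|\to\infty$. Set $T:=\tau_1(B\cup\{0\})$; before~$T$ the walk lies in $\Z^2\setminus(B\cup\{0\})\subset\Z^2\setminus\{0,y\}$, so $\psi(S_{n\wedge T\wedge\tau_0(\partial\B(N))})$ is a bounded martingale. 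Letting $N\to\infty$, the boundary contribution at $\partial\B(N)$ vanishes because $\psi$ stays uniformly close to $a(y)$ on $\partial\B(N)$ while $\IP_x[\tau_0(\partial\B(N))\le T]\to 0$ by recurrence of planar SRW; the contribution on $\{S_T=0\}$ also vanishes since $\psi(0)=0$; thus optional stopping yields
\[
a(x)+a(y)-a(x-y)\;=\;\psi(x)\;=\;\IE_x\!\left[\psi(S_T)\,\1{S_T\in B}\right].
\]

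To extract the two quantities I need, I then expand $\psi$ on $\partial B$: by~\eqref{formula_for_a}, for $S_T\in\partial B$ one has $\|S_T\|-\|y\|=O(r)$ and $\|S_T-y\|=r+O(1)$, hence $a(S_T)=a(y)+O(r/\|y\|)$ and $a(S_T-y)=a(r)+O(r^{-1})$ (writing $a(r):=\tfrac{2}{\pi}\ln r+\tfrac{2\gamma+\ln 8}{\pi}$), so $\psi(S_T)=2a(y)-a(r)+O(r^{-1}+r/\|y\|)$ uniformly on $\partial B$. Plugging this back in the displayed identity gives
\[
\IP_x\!\left[\tau_1(B)<\tau_1(\{0\})\right]
\;=\;\frac{a(x)+a(y)-a(x-y)}{2a(y)-a(r)+O(r^{-1}+r/\|y\|)},
\]
while the same uniform expansion yields $\IE_x\!\left[a(S_{\tau_1(B)})\mid\tau_1(B)<\tau_1(\{0\})\right]=a(y)+O(r/\|y\|)$. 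Multiplying these two, dividing by $a(x)$, and rearranging the error terms produces the claimed formula. The one genuinely delicate point is the $N\to\infty$ step: the whole argument hinges on $\psi$ being bounded at infinity, so that the product of $\|\psi\|_{\partial\B(N)}$ and $\IP_x[\tau_0(\partial\B(N))\le T]$ genuinely tends to zero---this is precisely why $\psi$ is used instead of $a$ itself, since applied to $a$ alone the boundary contribution is of order $\log N\cdot(\log N)^{-1}=O(1)$ and would survive the limit.
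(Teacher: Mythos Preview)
Your argument is correct. In the paper this lemma is not proved but simply quoted as Lemma~3.7(i) of~\cite{CPV15}; your self-contained derivation---the $h$-transform identity
\[
\IP_x\big[\htau_1(B)<\infty\big]=\frac{1}{a(x)}\,\IE_x\big[a(S_{\tau_1(B)})\,\1{\tau_1(B)<\tau_1(\{0\})}\big]
\]
followed by optional stopping for $\psi(z)=a(z)+a(y)-a(z-y)$---is the natural route. The two properties you isolate (that $\psi(0)=0$ and that $\psi$ is bounded, indeed $\psi(z)\to a(y)$ at infinity) are precisely what make the $N\to\infty$ limit go through cleanly, and your remark about why $a$ alone would not work is exactly the point. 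One small bonus: since the identity $\psi(x)=\IE_x\big[\psi(S_T)\1{S_T\in\partial B}\big]$ is exact, your formula for $\IP_x[\tau_1(B)<\tau_1(\{0\})]$ actually carries no $O(r^{-1})$ in the numerator, which is slightly sharper than (and of course consistent with) the stated form of~\eqref{eq_escape_from_ball}.
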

\begin{proof}
 This is Lemma~3.7 (i) of~\cite{CPV15}.
\end{proof}

\begin{lem}
\label{l_cap_distantball}
 Let $d=2$ and assume that $\|y\|>2r\geq 1$. 
We have
\begin{equation}
\label{eq_cap_distball}
 \capa\big(\{0\}\cup \B(y,r)\big)
  = \frac{\big(a(y)+O(\|y\|^{-1}r)\big)\big(a(y)+O(r^{-1})\big)}
{2a(y)-a(r)+O(r^{-1}+\|y\|^{-1}r)}.
\end{equation}
\end{lem}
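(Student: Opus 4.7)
The plan is to identify $\capa(\{0\}\cup\B(y,r))$ as the coefficient of $1/a(x)$ in the large-$\|x\|$ asymptotics of $\IP_x[\htau_1(\B(y,r))<\infty]$, and then read off the formula by substituting the expression of Lemma~\ref{l_escape_from_ball}. To set things up, I combine the definition $\capa(A)=\sum_{z\in A}a(z)\hm_A(z)$ with the paper's identity $\he_A(z)=a(z)\hm_A(z)$ (valid for $0\in A$) and the definition $\he_A(z)=a^2(z)\IP_z[\s_k\notin A\text{ for all }k\geq 1]$. Since $a(0)=0$, since the walk~$\s$ never visits the origin, and since strict interior points of $\B(y,r)$ never escape in one step, this reduces to
\[
\capa\big(\{0\}\cup\B(y,r)\big) \;=\; \sum_{z\in\partial\B(y,r)} a^2(z)\,\IP_z\big[\htau_1(\B(y,r))=\infty\big].
\]

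Next, for any $x\notin\{0\}\cup\B(y,r)$, the standard last-exit decomposition for the transient walk~$\s$ yields
\[
\IP_x\big[\htau_1(\B(y,r))<\infty\big] \;=\; \sum_{z\in\partial\B(y,r)} G_{\s}(x,z)\,\IP_z\big[\htau_1(\B(y,r))=\infty\big].
\]
Since $\s$ is the Doob $h$-transform of~$S$ killed at the origin via the harmonic function~$a$, one has $G_{\s}(x,z)=\frac{a(z)}{a(x)}G_{\{0\}}(x,z)$, and the classical two-dimensional identity $G_{\{0\}}(x,z)=a(x)+a(z)-a(x-z)$ applies. Multiplying through by $a(x)$ and separating terms gives
\[
a(x)\,\IP_x\big[\htau_1(\B(y,r))<\infty\big] \;=\; \capa\big(\{0\}\cup\B(y,r)\big) \;+\; \sum_{z\in\partial\B(y,r)} a(z)\big(a(x)-a(x-z)\big)\,\IP_z\big[\htau_1(\B(y,r))=\infty\big].
\]
By~\eqref{formula_for_a}, $a(x)-a(x-z)=O(\|z\|/\|x\|)$ uniformly over the finite boundary $\partial\B(y,r)$, so the remainder sum tends to~$0$ as $\|x\|\to\infty$, establishing
\[
\capa\big(\{0\}\cup\B(y,r)\big) \;=\; \lim_{\|x\|\to\infty} a(x)\,\IP_x\big[\htau_1(\B(y,r))<\infty\big].
\]

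Finally, I substitute the expression of Lemma~\ref{l_escape_from_ball} and pass to the limit: the factor $a(x)$ cancels the denominator factor $a(x)$, while $a(x)-a(x-y)\to 0$ by~\eqref{formula_for_a} with $y$ fixed, collapsing the second factor in the numerator to $a(y)+O(r^{-1})$ and producing~\eqref{eq_cap_distball}. The only real technical concern is to confirm that the $O$-terms in Lemma~\ref{l_escape_from_ball} are uniform in~$x$, so that the limit can legitimately be taken inside the $O$-notation; inspection of the proof of Lemma~3.7 of~\cite{CPV15} shows this is the case. Once that uniformity is secured, the argument is just a standard application of potential-theoretic identities in two dimensions.
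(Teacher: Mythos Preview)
Your argument is correct. The paper itself does not prove this lemma at all: it simply cites Lemma~3.9~(i) of~\cite{CPV15}. You instead give a self-contained derivation \emph{from} the other cited result, Lemma~\ref{l_escape_from_ball}, via the general potential-theoretic identity $\capa(\{0\}\cup A)=\lim_{\|x\|\to\infty}a(x)\,\IP_x[\htau_1(A)<\infty]$, which you establish by the last-exit decomposition for the transient walk~$\s$ combined with the $h$-transform relation $G_{\s}(x,z)=\frac{a(z)}{a(x)}\big(a(x)+a(z)-a(x-z)\big)$. This is a clean and instructive route: it explains \emph{why} the two formulas of Lemmas~\ref{l_escape_from_ball} and~\ref{l_cap_distantball} differ only by the factor $a(y)+a(x)-a(x-y)+O(r^{-1})$ collapsing to $a(y)+O(r^{-1})$, and it would let a reader verify the capacity formula without consulting~\cite{CPV15}. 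Your caveat about uniformity in~$x$ of the error terms in~\eqref{eq_escape_from_ball} is the right thing to flag; since the implicit constants there depend only on the geometry of $\B(y,r)$ relative to the origin, the limit passage is legitimate.
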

\begin{proof}
 This is Lemma~3.9 (i) of~\cite{CPV15}.
\end{proof}

Then, we show that the walks~$S$ and~$\s$ are
 almost indistinguishable on a ``distant'' (from the origin) set.
For $A \subset \Z^2$, let~$\Gamma^{(x)}_A$ be the set of all 
 finite nearest-neighbour
trajectories that start at~$x\in A\setminus\{0\}$ 
and end when entering~$\partial A$ for the first time.
 For~$V\subset \Gamma^{(x)}_A$ write
 $S\in V$ if there exists~$k$ such that 
$(S_0,\ldots,S_k)\in V$ (and the same for
the conditional walk~$\s$). 

\begin{lem}
\label{l_relation_S_hatS}
 Assume that $V\subset \Gamma^{(x)}_A$
and suppose that $0\notin A$, and denote $s=\dist(0,A)$,
$r=\diam(A)$. Then, for $x \in A$,
\begin{equation}
\label{eq_relation_S_hatS2}
\IP_x[S\in V]
 =\IP_x\big[\s \in V\big]\Big(1+O\Big(\frac{r}{s\ln s}\Big)\Big).
\end{equation}
\end{lem}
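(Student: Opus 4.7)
The approach is to exploit the fact that $\s$ is (essentially) the Doob $h$-transform of $S$ with $h=a$, which yields an exact trajectory-by-trajectory Radon–Nikodym formula that we can then Taylor-expand.

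First I would fix a deterministic finite nearest-neighbour path $\pi=(x_0,x_1,\dots,x_k)\in\Gamma^{(x)}_A$ with $x_0=x$, $x_k=y\in\partial A$, and all interior vertices in $A\setminus\partial A$. Since $0\notin A$, all the $x_i$ lie in $\Z^2\setminus\{0\}$, so every transition is allowed for both $S$ and $\s$. The simple random walk assigns $\IP_x[S \text{ follows } \pi]=4^{-k}$, while by the definition of $\s$,
\[
\IP_x[\s\text{ follows }\pi]=\prod_{i=0}^{k-1}\frac{a(x_{i+1})}{4a(x_i)}=\frac{a(y)}{a(x)}\,4^{-k},
\]
by telescoping. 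Hence for every such $\pi$ we obtain the exact identity $\IP_x[\s=\pi]=\frac{a(y)}{a(x)}\IP_x[S=\pi]$. Since both walks leave any finite set a.s.\ (the walk $S$ is recurrent and $\s$ is transient but irreducible on $\Z^2\setminus\{0\}$), summing over $\pi\in V$ gives
\[
\IP_x[\s\in V]=\sum_{\pi\in V}\frac{a(y(\pi))}{a(x)}\IP_x[S=\pi],
\]
where $y(\pi)\in\partial A$ denotes the endpoint.

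The next step is to control the factor $a(y)/a(x)$ uniformly over $x,y\in A$. Since $s\leq\|x\|,\|y\|\leq s+r$ and $\bigl|\,\|y\|-\|x\|\,\bigr|\leq r$, the asymptotic \eqref{formula_for_a} gives
\[
a(y)-a(x)=\frac{2}{\pi}\ln\frac{\|y\|}{\|x\|}+O(s^{-2})=O(r/s),
\]
while $a(x)=\frac{2}{\pi}\ln\|x\|+O(1)\geq \text{const}\cdot\ln s$ for $s$ large. Consequently
\[
\frac{a(y)}{a(x)}=1+\frac{a(y)-a(x)}{a(x)}=1+O\!\Bigl(\frac{r}{s\ln s}\Bigr)
\]
uniformly in $x,y\in A$. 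Plugging this into the displayed sum yields
\[
\IP_x[\s\in V]=\Bigl(1+O\!\bigl(\tfrac{r}{s\ln s}\bigr)\Bigr)\IP_x[S\in V],
\]
which is equivalent to \eqref{eq_relation_S_hatS2}.

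There is really no substantial obstacle here: the only point requiring a little care is verifying that the telescoping of the $h$-transform works cleanly (which it does because $\pi$ never visits $0$, automatic from $0\notin A$) and that the error estimate on $a(y)/a(x)$ is uniform, which follows directly from \eqref{formula_for_a} and the elementary bound $\ln(\|y\|/\|x\|)=O(r/s)$. All other ingredients are bookkeeping.
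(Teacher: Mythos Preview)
Your argument is correct. The paper itself does not give a self-contained proof of this lemma but simply cites Lemma~3.3~(ii) of~\cite{CPV15}; your computation via the telescoping Radon--Nikodym factor $a(y)/a(x)$ for the Doob $h$-transform, followed by the uniform estimate $a(y)/a(x)=1+O\big(\frac{r}{s\ln s}\big)$ from~\eqref{formula_for_a}, is exactly the natural (and presumably the cited) proof.
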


\begin{proof}
 This is Lemma~3.3 (ii) of~\cite{CPV15}.
\end{proof}

\subsection{Excursions and soft local times}
\label{s_SLT}
Let~$X=(X_k, k\geq 0)$ be a simple random walk on the 
two-dimensional torus $\Z^2_n=\Z^2/n\Z^2$.
In this section we will develop some tools for
dealing with excursions of two-dimensional random 
interlacements and random walks on tori;
in particular, one of our goals 
is to construct a coupling between the set of
RI's excursions and the set of excursions of the 
% simple random 
walk~$X$ on the torus.

First, if $A\subset A'$ are (finite) subsets
of~$\Z^2$ or~$\Z^2_n$, then the excursions
between~$\partial A$ and~$\partial A'$ are
pieces of nearest-neighbour trajectories
that begin on~$\partial A$ and end on~$\partial A'$,
see Figure~\ref{f_excurs_both}, 
which is, hopefully, self-explanatory.  We refer  
to Section~3.4 of~\cite{CPV15} for formal definitions.
Here and in the sequel we denote by $(Z^{(i)}, i\geq 1)$
the (complete) excursions of the walk~$X$ 
between~$\partial A$ and~$\partial A'$, 
and by $(\hZ^{(i)}, i\geq 1)$ the $\RI$'s excursions
between~$\partial A$ and~$\partial A'$
(dependence on $n,A,A'$ is not indicated in these
notations when there is no risk of confusion).
\begin{figure}
\begin{center}
\includegraphics{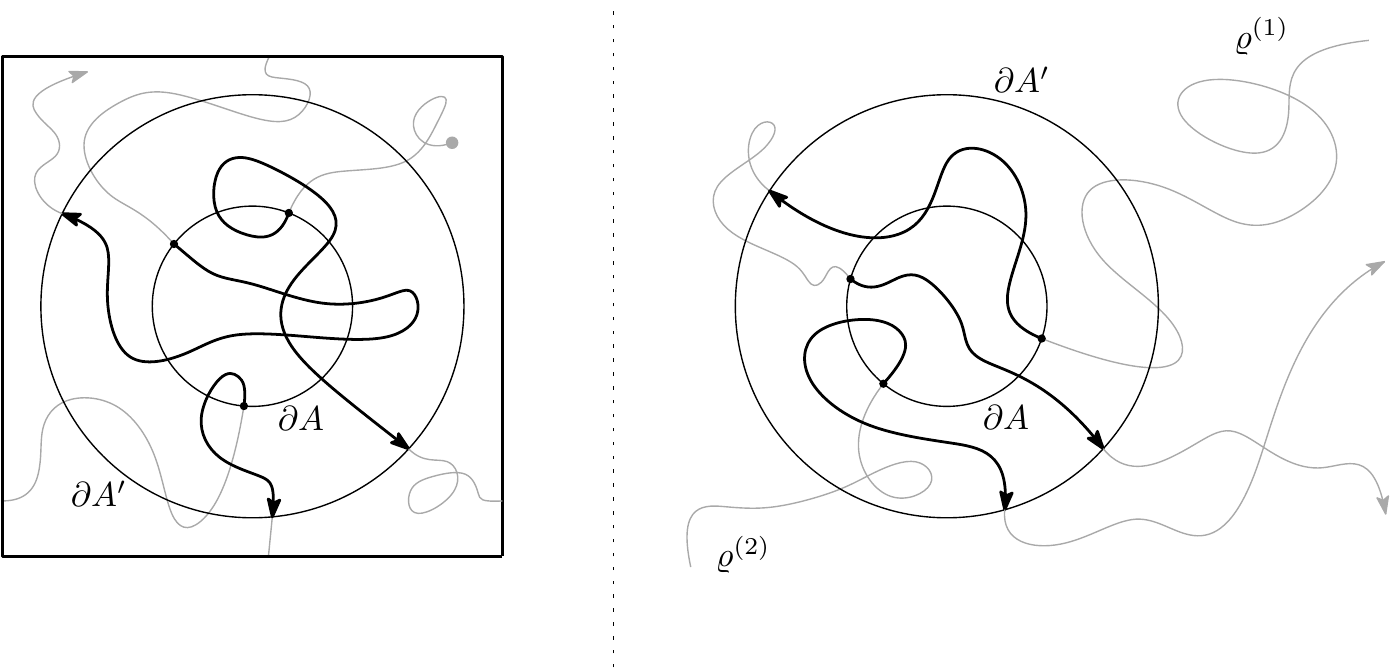}
\caption{Excursions (pictured as bold pieces of trajectories)
for simple random walk on the torus (on the left), and
random interlacements (on the right). Note the walk
``jumping'' from right side of the square to the left one,
and from the bottom one to the top one (the torus is
pictured as a square).
For random interlacements,
two trajectories, $\vr^{1,2}$, intersect the set~$A$; the first
trajectory produces two excursions, and the second only one.}
\label{f_excurs_both}
\end{center}
\end{figure}

Now, assume that we want to construct the excursions
of~$\RI(\alpha)$, say,
between~$\partial\B(y_0,n)$ and~$\partial\B(y_0,cn)$ for some $c>1$
and~$y_0\in\Z^2$. Also, (let us identify the torus $\Z^2_{n_1}$
with the square of size~$n_1$ centered in the origin of~$\Z^2$)
we want to construct the excursions
of the simple random walk on the torus~$\Z^2_{n_1}$ between~$\partial\B(y_0,n)$ and~$\partial\B(y_0,cn)$,
where~$n_1>n+1$.
It turns out that one may build both sets 
of excursions simultaneously on the same probability space, 
in such a way that, typically, most of the excursions 
are present in both sets (obviously, after a
translation by~$y_0$). 
This is done using the \emph{soft local times} method;
 we refer to Section~4 of~\cite{SLT}
for the general theory (see also Figure~1 of~\cite{SLT}
which gives some quick insight on what is going on),
and also to Section~2 of~\cite{CGPV13}. Here,
we describe the soft local times approach in a less formal way.
Assume, for definiteness, that 
 we want to construct the simple random walk's excursions
on~$\Z^2_{n_1}$, between~$\partial A$ and~$\partial A'$, and
suppose that the starting point~$x_0$ of the walk~$X$
does not belong to~$A$.  

We first describe our approach for the case of the torus.
For $x\notin A$ and~$y\in\partial A$ let us 
%denote~$\phi(x,y)=\IP_x[X_{\tau_1(A)}=y]$. 
denote
$$\phi(x,y)=\IP_x[X_{\tau_1(A)}=y].$$ 
For an excursion~$Z$ let~$\iota(Z)$ be the first
point of this excursion, and
$\ell(Z)$ be the last 
one; by definition, $\iota(Z)\in\partial A$
 and $\ell(Z)\in\partial A'$.
Clearly, for the random walk on the torus,
the sequence $\big((\iota(Z^{(j)}), \ell(Z^{(j)})), 
j\geq 1 \big)$
is a Markov chain with transition probabilities
\[
 P_{(y,z),(y',z')} = \phi(z,y')
   \IP_{y'}[X_{\tau_1(\partial A')}=z'].
\]

Now, consider a \emph{marked}
 Poisson point process on~$\partial A\times \R_+$
with rate~$1$. The (independent) marks are the simple random walk
trajectories started from the first coordinate of the Poisson
points (i.e., started at the corresponding site of~$\partial A$)
and run until hitting~$\partial A'$.
Then (see Figure~\ref{f_SLT_picture}; observe that~$A$ and~$A'$
need not be necessarily connected, as shown on the picture)
\begin{itemize}
 \item let~$\xi_1$ be the a.s.\ unique positive number such that
there is only one point of the Poisson process on the 
graph of $\xi_1\phi(x_0,\cdot)$ and nothing below;
 \item the mark of the chosen point is the first excursion 
(call it~$Z^{(1)}$) that we obtain;
 \item then, let~$\xi_2$ be the a.s.\ 
unique positive number such that
the graph of $\xi_1\phi(x_0,\cdot)+\xi_2\phi(\ell(Z^{(1)}),\cdot)$
contains only one point of the Poisson process, and there 
is nothing between this graph and the previous one;
 \item  the mark $Z^{(2)}$ of this point is our second excursion;
 \item and so on. 
\end{itemize}
It is possible to show 
%(see the above references) 
that the sequence of excursions obtained in this way  
 indeed has the same law as the simple random walk's excursions
 (in particular, conditional on~$\ell(Z^{(k-1)})$,
 the starting point of $k$th excursion is indeed
 distributed according to $\phi(\ell(Z^{(k-1)}),\cdot)$);
moreover, the $\xi$'s are i.i.d.\ random variables 
with Exponential($1$) distribution.
\begin{figure}
\begin{center}
\includegraphics[width=0.959\textwidth]{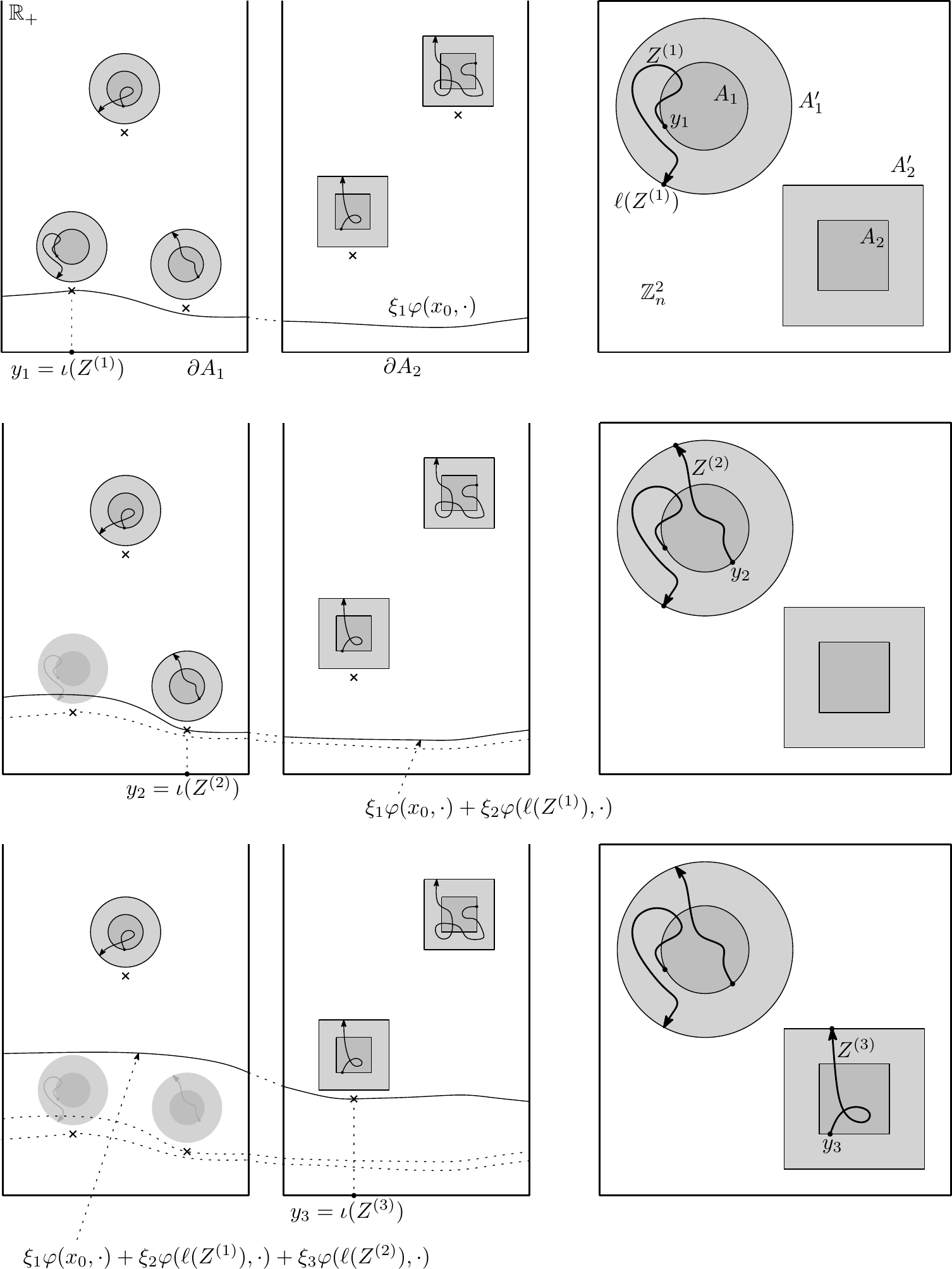}
\caption{Construction of the first three excursions
between~$\partial A$ and~$\partial A'$ on the torus~$\Z^2_n$
using the soft local times 
(here, $A=A_1\cup A_2$ and $A'=A'_1\cup A'_2$)}
\label{f_SLT_picture}
\end{center}
\end{figure}

So, let us denote by $\xi_1, \xi_2,\xi_3, \ldots$  a sequence
of i.i.d.\ random variables with Exponential distribution
with parameter~$1$. 
According to the above informal description,
the soft local time of $k$th excursion
is a random vector indexed by $y\in\partial A$, 
defined as follows:
\begin{equation}
\label{df_SLT_torus}
 L_k(y) = \xi_1\phi(x_0,y) 
   + \sum_{j=2}^k \xi_j \phi(\ell(Z^{(j-1)}),y).
\end{equation}

For the random interlacements, the soft local times are
defined analogously. Recall that $\hhm_A$ defines
the (normalized) harmonic measure on~$A$ with respect to 
the $\s$-walk.
For $x\notin A$ and~$y\in\partial A$ let 
\begin{equation}
\label{df_hat_phi}
 \hphi(x,y) = \IP_x\big[\s_{\htau_1(A)}=y,\htau_1(A)<\infty\big]
   + \IP_x[\htau_1(A)=\infty] \hhm_A(y).
\end{equation}
% (that is, $\hphi(x,y)$ is the probability that the $\s$-walk
% starting from~$x$
% enters~$A$ through~$y$, ).
Analogously, for the random interlacements,
the sequence $\big((\iota(\hZ^{(j)}), \ell(\hZ^{(j)})), j\geq 1 \big)$
is also a Markov chain, with transition probabilities
\[
 {\widehat P}_{(y,z),(y',z')} = \hphi(z,y')
   \IP_{y'}\big[\s_{\htau_1(\partial A')}=z'\big].
\]
The process of picking the excursions for the random 
interlacements is quite analogous: if the last 
excursion was~$\hZ$, we use the 
probability distribution $\hphi(\ell(\hZ),\cdot)$
to choose the starting point of
the next excursion.   Clearly, the last term
in~\eqref{df_hat_phi} is needed for~$\hphi$ to have total mass~$1$;
informally, if the $\s$-walk from~$x$ does not ever hit~$A$,
we just take the ``next'' trajectory of the random
interlacements that does hit~$A$, and extract the
excursion from it (see also~(4.10) of~\cite{CT14}). 
Again, let $\hat\xi_1, \hat\xi_2,\hat\xi_3, \ldots$ be a sequence
of i.i.d.\ random variables with Exponential distribution
with parameter~$1$. 
Then, define the soft local time of random interlacement
 of $k$th excursion as
\begin{equation}
\label{df_SLT_RI}
 \hL_k(y) = \hat\xi_1\hphi(x_0,y) 
   + \sum_{j=2}^k \hat\xi_j \hphi(\ell(\hZ^{(j-1)}),y).
\end{equation}

Define the following two measures on~$\partial A$,
one for the random walk on the torus, and the other for random 
interlacements:
\begin{align}
\hm_A^{A'}(y) &= \IP_y[\tau_1(\partial A')<\tau_1(A)]
\Big(\sum_{z\in\partial A}\IP_z[\tau_1(\partial A')
<\tau_1(A)]\Big)^{-1},
\label{df_hm_set_RW}\\
\hhm_A^{A'}(y) &= \IP_y[\htau_1(\partial A')<\htau_1(A)]
\Big(\sum_{z\in\partial A}
\IP_z[\htau_1(\partial A')<\htau_1(A)]\Big)^{-1}.
\label{df_hm_set_RI}
\end{align}
Informally, these are ``harmonic measures with respect to~$A'$'';
the ``real'' harmonic measures would be recovered as~$A'$
expands towards the whole space~$\Z^2$.
Similarly to Lemma~6.1 of~\cite{CT14} 
%\textbf{(cite also that proposition from~\cite{Szn12}?)},
 one can obtain the following important facts:
the measure 
\begin{equation}
\label{df_psi}
\psi(y,z) = \hm_A^{A'}(y)\IP_{y}[X_{\tau_1(\partial A')}=z]
\end{equation}
is invariant for the Markov chain $(\iota(Z^{(j)}), \ell(Z^{(j)}))$,
and the measure
\begin{equation}
\label{df_hpsi}
 \hpsi(y,z) = \hhm_A^{A'}(y)
           \IP_{y}\big[\s_{\htau_1(\partial A')}=z\big]
\end{equation}
is invariant for the Markov chain 
$(\iota(\hZ^{(j)}), \ell(\hZ^{(j)}))$.
%\textbf{(check this for RI's!)}.
Notice also that $\hm_A^{A'}$ and~$\hhm_A^{A'}$ are the 
marginals of the stationary measures for the entrance
points (i.e., the first coordinate of the Markov chains).
In particular, this implies that, almost surely, 
\[
 \lim_{k\to\infty} \frac{L_k(y)}{k} = \hm_A^{A'}(y)
\quad \text{and} \quad
 \lim_{k\to\infty} \frac{\hL_k(y)}{k} = \hhm_A^{A'}(y),
\]
for any $y\in\partial A$.

The next result is needed to have a control on 
the large and moderate deviation 
probabilities for soft local times.
\begin{lem}
\label{l_LD_SLT}
Let $\gamma_2>\gamma_1>1$ be some fixed constants,
assume that $n>3(\gamma_1-1)^{-1}$, and
abbreviate $n_1=\gamma_2 n$. For the random walk on the 
torus~$\Z^2_{n_1}$, abbreviate $A=\B(n)$ and $A'=\B(\gamma_1 n)$.
For the random interlacements,  
abbreviate $B=\B(y_0, n)$ and $B'=\B(y_0, \gamma_1 n)$,
where~$y_0\in\Z^2$ is such that $\|y_0\|\geq 2\gamma_1 n$.
Then there exist positive constants $c,c_1,c_2$ such that
for all~$k\geq 2$ and all $\theta\in(0,(\ln k)^{-1})$ we have
\begin{align}
\IP\Big[
% \frac{1}{\sqrt{k}}
\sup_{y\in\partial A}
\big|L_k(y)-k\hm_{A}^{A'}(y)\big| 
\geq \frac{c\sqrt{k}+\theta k}{n_1}\Big] 
\leq c_1 e^{-c_2 \theta^2 k}, \label{eq_LD_SLT_rw}\\
\IP\Big[
% \frac{1}{\sqrt{k}}
\sup_{y\in\partial B}
\big|\hL_k(y)-k\hhm_B^{B'}(y)\big| 
\geq \frac{c\sqrt{k}+\theta k}{n}\Big] 
\leq c_1 e^{-c_2 \theta^2 k}.
\label{eq_LD_SLT_RI}
\end{align}
% for all large enough~$n$.
\end{lem}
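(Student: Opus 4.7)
I will treat the random-walk bound~\eqref{eq_LD_SLT_rw} first and then indicate the (cosmetic) modifications for~\eqref{eq_LD_SLT_RI}. Writing $x_j=\ell(Z^{(j-1)})$ for $j\geq 2$ and $x_1=x_0$, the definition~\eqref{df_SLT_torus} splits naturally as
\begin{equation*}
L_k(y)-k\,\hm_A^{A'}(y)
 = \underbrace{\sum_{j=1}^k (\xi_j-1)\,\phi(x_j,y)}_{=:M_k(y)}
 \;+\; \underbrace{\sum_{j=1}^k \big(\phi(x_j,y)-\hm_A^{A'}(y)\big)}_{=:E_k(y)}.
\end{equation*}
Conditionally on the excursion sequence $(Z^{(j)})$, $M_k(y)$ is a sum of independent, centered random variables (a scaled sum of $\xi_j-1$'s), and Lemma~\ref{l_G_entrance} (applied to $x_j\in\partial A'$ and $y\in\partial A$) gives the uniform bound $\phi(x_j,y)\leq c_6/n_1$. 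The term $E_k(y)$ is an ergodic-average error for the Markov chain $(\iota(Z^{(j)}),\ell(Z^{(j)}))$, whose invariant measure has the correct marginal $\hm_A^{A'}$ by~\eqref{df_psi}.

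For fixed $y$, bound $M_k(y)$ by Bernstein's inequality for Exp(1)-increments: the conditional variance is at most $\sum_{j=1}^k \phi(x_j,y)^2 \leq c_6^2 k/n_1^2$ and the increments are stochastically dominated by $(c_6/n_1)\xi_j$, yielding
\begin{equation*}
 \IP\Big[|M_k(y)|\geq t\Big]\leq 2\exp\Big(-c\min\Big(\frac{t^2 n_1^2}{k},\,t n_1\Big)\Big)
\end{equation*}
for $t>0$. Choosing $t$ of order $(\sqrt{k}+\theta k)/n_1$ and using the assumption $\theta<(\ln k)^{-1}$ to keep the sub-Gaussian regime active, this gives the desired exponential tail pointwise. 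For $E_k(y)$ one uses a renewal/regeneration decomposition: excursions between the annuli $A$ and $A'$ mix in a bounded number of steps (Lemmas~\ref{l_exit_balls}--\ref{l_G_entrance}), so by the Markov chain Bernstein inequality (or by a Doeblin-type coupling), the ergodic average deviates from $\hm_A^{A'}(y)$ by at most the same order $(\sqrt{k}+\theta k)/n_1$ with exponential probability.

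To upgrade these pointwise bounds to a uniform bound over $y\in\partial A$, I invoke Proposition~\ref{p_SRW_Hoelder}: the normalized Poisson kernel $y\mapsto \phi(x,y)/\hm_A(y)$ is H\"older of exponent $\beta$ uniformly in the starting point~$x$, and the same holds for $y\mapsto \hm_A^{A'}(y)/\hm_A(y)$ (this ratio is a finite linear combination of normalized entrance measures, via the harmonic decomposition used in~\eqref{df_hm_set_RW}). Choose a net $\mathcal{N}\subset\partial A$ of mesh $n_1^{1-1/\beta}k^{-1/\beta}$ or so (of cardinality polynomial in $n_1$); apply the pointwise bound with a union bound over $\mathcal{N}$, then use the H\"older interpolation to deduce the sup bound. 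The polynomial prefactor is absorbed into the exponential by adjusting constants, which is exactly the regime $\theta<(\ln k)^{-1}$ permits.

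For the random-interlacement bound~\eqref{eq_LD_SLT_RI}, the argument is identical once $\phi$ is replaced by $\hphi$ from~\eqref{df_hat_phi}. The extra term $\IP_x[\htau_1(B)=\infty]\,\hhm_B(y)$ in~\eqref{df_hat_phi} is itself proportional to the harmonic measure, so both H\"older regularity and the $O(1/n)$ pointwise bound transfer after comparing $\s$-walk to $S$-walk via Lemma~\ref{l_relation_S_hatS} (the relative error there is $O(r/(s\ln s))$, negligible since $\|y_0\|\geq 2\gamma_1 n$ while $r=O(n)$). The main obstacle I anticipate is the uniform-in-$y$ step: the pointwise Bernstein is routine, but transferring it to a supremum without paying a prohibitive union-bound factor is exactly what the H\"older estimate in Proposition~\ref{p_SRW_Hoelder} was established for, and making the covering argument work with the right exponents requires care in the regime where $\theta k$ is just barely larger than $\sqrt{k}$.
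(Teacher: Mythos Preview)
Your decomposition $L_k-k\hm_A^{A'}=M_k+E_k$ and the pointwise Bernstein/Markov-chain bounds are fine, but the passage from pointwise to uniform has a real gap. A single-scale net plus a union bound produces a multiplicative prefactor $|\mathcal N|$ in front of $e^{-c\theta^2 k}$, and with your mesh $|\mathcal N|$ is polynomial in $n_1$---yet the constants $c_1,c_2$ in the statement must not depend on~$n$. Even if you optimise the mesh so that $|\mathcal N|$ is polynomial only in~$k$ (say $|\mathcal N|\asymp k^{1/(2\beta)}$, which is what matching the interpolation error to $c\sqrt{k}$ gives), the prefactor is absorbed only when $\theta^2 k\gtrsim \ln k$; for $\theta$ in the range where $\theta^2 k$ is between a large constant and $\ln k$ the bound $c_1 e^{-c_2\theta^2 k}$ is neither vacuous nor achieved by the union bound. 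Your remark that ``the regime $\theta<(\ln k)^{-1}$ permits'' this absorption is backwards: that hypothesis is an \emph{upper} bound on~$\theta$ and gives no lower bound on $\theta^2 k$.

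The paper avoids this by a different route. It first performs a Nummelin splitting (the same Doeblin minorisation $\phi(x,\cdot)\geq 2\lambda\,\hm_A^{A'}(\cdot)$ you allude to for~$E_k$, but applied to the whole soft local time rather than just its ergodic part) so as to work with i.i.d.\ blocks~$W_j$. Then, instead of a net and union bound, it bounds the \emph{expectation} of the supremum by Pollard's inequality $\IE\sup_y|\cdot|\leq c\,J_{[\,]}(1,\F,\|\cdot\|_2)\,\|F\|_2\sqrt{k}$; the H\"older estimate of Proposition~\ref{p_SRW_Hoelder} enters here to show that the bracketing entropy integral $J_{[\,]}$ is bounded by an absolute constant (since $N_{[\,]}(s)\lesssim s^{-1/\beta}$ gives a convergent integral), so $\IE\sup_y|\cdot|=O(\sqrt{k})$ uniformly in~$n$. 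Finally, deviations of the supremum from its mean are handled by Adamczak's concentration inequality; it is in the sub-exponential term of that inequality that the assumption $\theta<(\ln k)^{-1}$ is actually used. In short, you use Proposition~\ref{p_SRW_Hoelder} as an interpolation device across a net, whereas the paper uses it to make the entropy integral finite---and only the latter yields a bound uniform in~$n$ and valid across the full range of~$\theta$.
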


\begin{proof}
We prove only~\eqref{eq_LD_SLT_rw}, the proof
of~\eqref{eq_LD_SLT_RI} is completely analogous.
 Due to Lemma~\ref{l_G_entrance}, 
it is enough to show that for some (sufficiently large) $c',c'_1$
and (sufficiently small) $c'_2>0$ 
\begin{equation}
\label{LD_modif_SLT}
 \IP\Big[
% \frac{1}{\sqrt{k}}
\sup_{y\in\partial A}
\Big|\frac{L_k(y)}{\hm_{A}(y)}
  -k\frac{\hm_{A}^{A'}(y)}{\hm_{A}(y)}\Big| 
\geq c'\sqrt{k}+\theta k\Big] 
\leq c'_1 e^{-c'_2 \theta^2 k}.
\end{equation}

We prove the above inequality in the following way:
\begin{enumerate}
 \item Using renewals, we split the sequence of excursions 
into independent blocks.
 \item We show that controlling that sequence of i.i.d.\ 
blocks of excursions is enough to be able to control
the original sequence.
 \item Then, we obtain an upper bound on the expectation
of the sum of independent blocks. For this, we estimate the 
bracketing entropy integral and then use an inequality due to
Pollard.
 \item Finally, we control the deviation (from the expectation) 
probabilities, using a suitable concentration inequality.
\end{enumerate}

\medskip 
\noindent
\textbf{Step 1.}
Again, Lemma~\ref{l_G_entrance} implies that 
there exists~$\lambda>0$ such that for all 
$x\in \partial A'$ and~$y\in \partial A$
we have $\phi(x,y)\geq 2\lambda \hm_{A_1}^{A'_1}(y)$.
Consider a sequence of random variables
 $\eta_1, \eta_2, \eta_3, \ldots$, independent of everything,
and such that $\IP[\eta_j=1]=1-\IP[\eta_j=0]=\lambda$ for all~$j$.
For $j\geq 1$ define $\rho_j=m$ iff $\eta_1+\cdots+\eta_m=j$
and~$\eta_m=1$ (that is, $\rho_j$ is the position of $j$th ``$1$''
in the $\eta$-sequence).
The idea is that we force the Markov chain to
have renewals at times when $\eta_\cdot = 1$,
and then try to approximate the soft local time
by a sum of independent random variables. More
precisely, assume that $\ell(Z^{(j-1)})=x$. Then, 
we choose the starting point~$\iota(Z^{(j)})$ of the $j$th excursion
in the following way
\[
 \iota(Z^{(j)}) \sim 
   \begin{cases}
    \displaystyle\frac{1}{1-\lambda}
      \big(\phi(x,\cdot)-\lambda \hm_{A_1}^{A'_1}\big),
          & \text{if }\eta_j=0,\\
     \hm_{A_1}^{A'_1},\phantom{\int\limits^{A^B}}
      & \text{if }\eta_j=1.
   \end{cases}
\]
Denote $W_0(\cdot) = L_{\rho_1-1}(\cdot)$, and
\[
 W_j(\cdot) = L_{\rho_{j+1}-1}(\cdot) - L_{\rho_j-1}(\cdot)
\]
for $j\geq 1$. By construction, it holds that 
$(W_j,j\geq 1)$ is a sequence of i.i.d.\ random vectors.
Also, it is straightforward to obtain that
$\IE W_j(y) = \lambda^{-1}\hm_{A_1}^{A'_1}(y)$
for all $y\in\partial A$ and all $j\geq 1$.

% To avoid cumbersome notations, in the calculations
% below we pretend that~$\lambda k$ is integer
% (one can even make that rigorous by choosing a rational~$\lambda$
% and then going along a subsequence of $k$'s).

\medskip 
\noindent
\textbf{Step 2.}
Now, we are going to show that, to prove~\eqref{LD_modif_SLT},
it is enough to prove that, for some positive constants 
$c'',c''_1, c''_2$
\begin{equation}
\label{LD_iid_SLT}
  \IP\Big[
% \frac{1}{\sqrt{m}}
\sup_{y\in\partial A}
\Big|\frac{\sum_{j=1}^m W_j(y)}{\hm_{A}(y)}
  -\lambda^{-1}m\frac{\hm_{A}^{A'}(y)}
{\hm_{A}(y)}\Big| 
\geq c''\sqrt{m}+\theta m\Big] 
\leq c''_1 e^{-c''_2 \theta^2 m}.
\end{equation}
for all~$m\geq 2$ and all $\theta\in(0,(\ln m)^{-1})$.
Observe that we can assume without loss of generality
that $c''_1\leq \frac{1}{2}$; indeed, if the above holds
with \emph{some} $c''_1>0$, then, by increasing~$c''$
(and, possibly, decreasing~$c''_2$;
 note that 
$(c''+h)\sqrt{m}+\theta m = c''\sqrt{m}+(\theta +\frac{h}{\sqrt{m}})m$)
we can put an arbitrarily small constant before the exponent 
in the right-hand side.

Abbreviate 
\begin{align*}
 R_k &= \sup_{y\in\partial A}
\Big|\frac{L_k(y)}{\hm_{A}(y)}-
k\frac{\hm_{A}^{A'}(y)}{\hm_{A}(y)}\Big|,\\
\intertext{and} 
{\widetilde R}_k &= \sup_{y\in\partial A}
\Big|
\frac{\sum_{j=1}^k W_j(y)}{\hm_{A}(y)}
  -\lambda^{-1} k\frac{\hm_{A}^{A'}(y)}{\hm_{A}(y)}\Big|.
\end{align*}
Let us first show that~\eqref{LD_iid_SLT} implies
\begin{equation}
\label{LD_max}
 \IP\Big[\max_{i\in [m,2m]} {\widetilde R}_i 
\geq 4c''\sqrt{m}+5\theta m\Big]
  \leq 2c''_1 e^{-3c''_2 \theta^2 m}.
\end{equation}
For this, define the random variable
\[
 N = \min\big\{i\in [m,2m]: {\widetilde R}_i 
\geq 4c''\sqrt{m}+5\theta m\big\}
\]
(by definition, $\min \emptyset := +\infty$),
so the left-hand side of~\eqref{LD_max}
is equal to $\IP\big[N\in [m,2m]\big]$.
Note also that the right-hand side of~\eqref{LD_iid_SLT}
does not exceed~$\frac{1}{2}$ (recall 
that we assumed that $c''_1\leq \frac{1}{2}$).
Now, \eqref{LD_iid_SLT} implies (note that $\sqrt{3}<4-\sqrt{2}$)
\begin{align*}
 c''_1 e^{-3 c''_2 \theta^2 m} &\geq 
 \IP\big[{\widetilde R}_{3m} \geq c''\sqrt{3m}+3\theta m\big]
\\
 &\geq \sum_{j=m}^{2m} \IP[N=j]
  \IP\big[{\widetilde R}_{3m-j} < c''\sqrt{2m}+2\theta m\big]
\\
 &\geq \IP\big[N\in [m,2m]\big] \times
\min_{j\in [m,2m]}   
 \IP\big[{\widetilde R}_{3m-j} < c''\sqrt{3m-j}+\theta (3m-j)\big]
\\
 &\geq \frac{1}{2} \IP\big[N\in [m,2m]\big]
\end{align*}
for all large enough~$m$. 

Next, let us denote $\sigma_k = \min\{j\geq 1: \rho_j > k\}$.
By~\eqref{df_hm_set_RW} and Lemma~\ref{l_escape_to}
we may assume that $\frac{1}{2}\leq\frac{\hm_{A}^{A'}(y)}
{\hm_{A}(y)}\leq 2$, and, due to Lemma~\ref{l_G_entrance}, 
$\frac{L_k(y)}{\hm_{A}(y)}\leq \tilde{c}$ for
some $\tilde{c}>0$.
% 
% the centered soft local time is a sum 
% of terms of the form $a_j\xi_j-b_j$, where $a_j,b_j$
% are uniformly bounded)
%  it holds that
% , uniformly in~$y$
% \begin{equation}
% \label{RtildeR}
% - c  \sum_{j=1}^{\rho_{1}-1} \xi_j
% - c  \sum_{j=\rho_{\lambda k}}^{k+1} \xi_j
%  \leq {\widetilde R}_y-R_y
%  \leq c \sum_{j=k+1}^{\rho_{\lambda k}} \xi_j
% \end{equation}
% (we assume that the sum equals~$0$ in case the upper limit
% is strictly less than the lower limit in the summation).
% So, we obtain that 
% \begin{equation}
% \label{dominate_Xi}
%  \sup_{y\in\partial A}
% \Big|\frac{L_k(y)}{\hm_{A}(y)}
%   -k\frac{\hm_{A}^{A'}(y)}{\hm_{A}(y)}\Big| 
%   \leq \sup_{y\in\partial A}
% \Big|\frac{\sum_{j=1}^{\sigma_k}W_j(y)}{\hm_{A}(y)}
%   -\lambda^{-1}\sigma_k\frac{\hm_{A}^{A'}(y)}
% {\hm_{A}(y)}\Big| + \Xi,
% \end{equation}
% where the random variable~$\Xi$ is stochastically
% bounded above by $\sum_{j=1}^{\rho_1-1}\xi_j$.
% \[
%  c \sum_{j=1}^{|k-\rho_{\lambda k}|+\rho_1} \xi'_k,
% \]
% where $\xi'_1,\xi'_2,\xi'_3,\ldots$ are i.i.d.\
% Exponential($1$) random variables.
% Using this, it is elementary (\textbf{really?}) to prove that
% \begin{equation}
% \label{LD_Xi}
% \IP[\Xi \geq c\sqrt{k}+\theta k] 
% \leq c''_1 e^{-c''_2 \theta^2 k}.
% \end{equation}
So, we can write
\begin{equation}
\label{dominate_Xi}
 R_k \leq {\widetilde R}_{\sigma_k} + 2|\lambda^{-1}\sigma_k-k|
   +\tilde{c}\sum_{i=k+1}^{\rho_{\sigma_k}}\xi_i.
\end{equation}
Now, observe that $\sigma_k-1$ is a Binomial$(k,\lambda)$
random variable, and $\rho_{\sigma_k} - k$ is 
Geometric$(\lambda)$. Therefore,
the last two terms in the right-hand side of~\eqref{dominate_Xi}
are easily dealt with; that is, we may write
for large enough~$\hat{c}>0$
\begin{align}
\IP\big[2|\lambda^{-1}\sigma_k-k| \geq \hat{c}\sqrt{k}+\theta k\big]
 &\leq c_4  e^{-c'_4 \theta^2 k},
\label{LD_simple}\\
\IP\Big[\tilde{c}\sum_{i=k+1}^{\rho_{\sigma_k}}\xi_i\geq \theta k\Big] 
&\leq e^{-c_5 \lambda \theta k}.
\label{LD_very_simple}
\end{align}
Then, using~\eqref{LD_max} together
with \eqref{LD_simple}--\eqref{LD_very_simple}, we obtain
(recall~\eqref{LD_modif_SLT})
\begin{align*}
 \lefteqn{\IP\Big[R_k\geq \big(4c''(\textstyle\frac{2\lambda}{3})^{1/2}
  + \hat{c}\big)\sqrt{k} + \big(\textstyle\frac{10}{3}\lambda+2\big)
\theta k\Big]}\\
  &\leq \IP\Big[\max_{i\in[\frac{2}{3}\lambda k,\frac{4}{3}\lambda k]}
{\widetilde R}_i \geq 4c''(\textstyle\frac{2\lambda}{3})^{1/2}
 \sqrt{k} + 5\theta\cdot \textstyle\frac{2}{3}\lambda k\Big]
%\\
% & \quad 
+ \IP\big[\sigma_k\notin [\frac{2}{3}\lambda k,
\frac{4}{3}\lambda k]\big] \\
& \quad 
+\IP\big[2|\lambda^{-1}\sigma_k-k| \geq \hat{c}\sqrt{k}+\theta k\big]
+\IP\Big[\tilde{c}\sum_{i=k+1}^{\rho_{\sigma_k}}\xi_i\geq \theta k\Big] 
\\
&\leq 2c''_1 e^{-2\lambda c''_2 \theta^2 k}
 + e^{-c_6\lambda k} + c_4  e^{-c'_4 \theta^2 k}
 + e^{-c_5 \lambda \theta k},
\end{align*}
and this shows that
 it is indeed enough for us to prove~\eqref{LD_iid_SLT}.

% The last term in the right-hand side of~\eqref{dominate_Xi}
% is easily dealt with, and, since we clearly have that
% $\IP[\lambda k/2 \leq \sigma_k\leq k]\geq 1-e^{-c_3 k}$,
%  we just use the union bound in~\eqref{dominate_Xi} 
% to obtain that it is indeed enough for us to prove~\eqref{LD_iid_SLT}.
% \textbf{(can we get rid of the union bound? it's ugly\dots)}

\medskip 
\noindent
\textbf{Step 3.}
Now, the advantage of~\eqref{LD_iid_SLT} is that 
we are dealing with i.i.d.\ random vectors there,
so it is convenient to use some machinery from the 
theory of empirical processes.    
First, the idea is to use the Pollard
inequality (cf.~(1.2) of~\cite{VW11}) to prove that
\begin{equation}
\label{empirical_expect}
 \IE {\widetilde R}_k \leq c_7\sqrt{k}
\end{equation}
for some $c_7>0$ (note that the above estimate is uniform 
with respect to the size of $\partial A$). 
To use the language of empirical processes, we are dealing 
here with random elements of the form $\tW_j=\frac{W_j}{\hm_A}$ which are 
positive vectors indexed by sites of~$\partial A$.
Let also~$Y$ be a generic positive vector indexed by sites of~$\partial A$.
For $y\in\partial A$ 
let~$\EE_y$ be the evaluation functional at~$y$: $\EE_y(Y):=Y(y)$.
Denote by~$\F=\{\EE_y, y\in\partial A\}$ the class of functions we
are interested in; then, we need to find an upper
bound on the expectation of
$\sup_{f\in\F} |\sum_{j=1}^k f(\tW_j) - \IE f(\tW_j)|$.
Using the terminology of~\cite{VW11}, let $\|f\|_2:=\sqrt{\IE f^2(\tW)}$,
where~$\tW$ has the same law as the $\tW_j$'s above.  
Consider the \emph{envelope function}~$F$ defined by
\[
 F(Y) = \sup_{y\in \partial A} \EE_y(Y) = \sup_{y\in \partial A}Y(y).
\]
% ($Y$ is a generic positive vector indexed by sites of~$\partial A$).
Due to Lemma~\ref{l_G_entrance}, we have
\begin{equation}
\label{|F|_2}
 \|F\|_2 \leq c_8.
\end{equation}

%To be able to apply~(1.2) of~\cite{VW11}, one has to estimate
Let us define 
the \emph{bracketing entropy integral}
\begin{equation}
\label{entropy_integral}
 J_{[\,]}\big(1,\F,\|\cdot\|_2\big) 
    = \int_0^1 \sqrt{1+\ln N_{[\,]}(s\|F\|_2, \F,\|\cdot\|_2 )}ds.
\end{equation}
In the above expression,
$N_{[\,]}(\delta, \F,\|\cdot\|_2 )$ is the so-called
\emph{bracketing number}: the minimal number of
brackets $[f,g]=\{h: f\leq h\leq g\}$ needed to cover~$\F$
 of size~$\|g-f\|_2$ smaller than~$\delta$.
We now recall~(1.2) of~\cite{VW11}:
\begin{equation}
\label{Pollard_ineq}
\IE {\widetilde R}_k \leq c J_{[\,]}(1,\F,\|\cdot\|_2) 
\|F\|_2 \sqrt{k};
\end{equation}
so, to prove~\eqref{empirical_expect}, 
we need to obtain an upper bound on the bracketing entropy integral.

Let us define ``arc intervals'' on~$\partial A$
by $I(y,r)=\{z\in \partial A: \|y-z\|\leq r\}$, where $y\in\partial A, r>0$.
Observe that $I(y,r)=\{y\}$ in case $r<1$.
Define
\[
 f^{y,r}(Y) = \inf_{z\in I(y,r)}Y(z), \quad 
    g^{y,r}(Y) = \sup_{z\in I(y,r)}Y(z);
\]
in order to cover~$\F$,
we are going to use brackets of the form $[f^{y,r},g^{y,r}]$.
Notice that if $z\in I(y,r)$ then $\EE_z\in [f^{y,r},g^{y,r}]$,
so a covering of~$\F$ by the above brackets corresponds
to a covering of~$\partial A$ by ``intervals'' $I(\cdot, \cdot)$.
Let us estimate the size of the bracket $[f^{y,r},g^{y,r}]$;
it is here that Proposition~\ref{p_SRW_Hoelder} comes into play.
We have
\begin{align}
 \|g^{y,r}-f^{y,r}\|_2 & 
 = \textstyle\sqrt{\IE\big|\!\sup_{z\in I(y,r)}\tW(z)
   - \inf_{z\in I(y,r)}\tW(z)\big|}
 \nonumber\\
   &\leq c_9 r^\beta \|\xi_1+\cdots+\xi_\rho\|_2
   \nonumber\\
    &\leq 2c_9\lambda^{-1} r^\beta;
\label{ExpGeomExp}
\end{align}
in the above calculation, $\rho$ is a Geometric random variable
with success probability~$\lambda$, $\xi$'s are i.i.d.\ Exponential(1)
random variables also independent of~$\rho$, 
and we use an elementary fact that $\xi_1+\cdots+\xi_\rho$
is then also Exponential with mean $\lambda^{-1}$.

Then, recall~\eqref{|F|_2}, and observe that, for any~$\delta>0$
it is possible to cover~$\F$ with $|\partial A|=O(n_1)$
brackets of size smaller than~$\delta$ 
(just cover each site separately with brackets 
$[f^{\cdot,1/2},g^{\cdot,1/2}]$ of zero size).
That is, for any $s>0$ it holds that 
\begin{equation}
\label{est_brackets_arb}
 N_{[\,]}\big(s\|F\|_2, \F,\|\cdot\|_2 \big) \leq c_{10} n_1.
\end{equation}
Next, if $s\geq c_{11} n_1^{-\beta}$, then we are able to 
use intervals of size $r=O(n_1 s^{1/\beta})$ to cover~$\partial A$,
so we have
\begin{equation}
\label{est_brackets_s>const}
 N_{[\,]}\big(s\|F\|_2, \F,\|\cdot\|_2 \big) = O(n_1/r) 
\leq c_{12} s^{-1/\beta}.
\end{equation}
So (recall~\eqref{entropy_integral}) the bracketing entropy integral
can be bounded above by
\[
 c_{11} n_1^{-\beta}\sqrt{1+\ln (c_{10}n_1)} 
  + \int_{c_{11} n_1^{-\beta}}^1\sqrt{1+\ln (c_{12}s^{-1/\beta})}\, ds
    \leq c_{13}.
\]
Then, using~\eqref{Pollard_ineq}, we obtain~\eqref{empirical_expect}.

\medskip 
\noindent
\textbf{Step 4.}
Here, let us use Theorem~4 of~\cite{A08} to prove that
(with $t=\theta k$)
\begin{equation}
\label{empirical_deviation}
 \IP\big[{\widetilde R}_k \geq 2\IE {\widetilde R}_k + t\big]
  \leq c_{14} e^{-c_{15}t^2/k} + c_{16} e^{-c_{17}t/\ln k};
\end{equation}
this is enough for us since, 
due to the assumption $\theta < (\ln k)^{-1}$
it holds that
\[
 c_{14} e^{-c_{15}t^2/k} + c_{16} e^{-c_{17}t/\ln k}
   \leq c_{18}e^{-c_{19}\theta^2 k}.
\]
To apply that theorem, we only need 
to estimate the $\psi_1$-Orlicz norm
of $\EE_y(\tW)$, see Definition~1 of~\cite{A08}. 
But (recall the notations just below~\eqref{ExpGeomExp})
it holds that $\EE_y(\tW)$ is stochastically bounded above
by $const\times$Exponential($\lambda$) random variable,
so the $\psi_1$-Orlicz norm is uniformly bounded above\footnote{a
straightforward calculation shows that the $\psi_1$-Orlicz norm
of an Exponential random variable equals its expectation}.
% Also, observe that, in this case, the second term in the
% right-hand side of the third display of Theorem~4 of~\cite{A08}
% is of smaller order than the first term.
The factor $\ln k$ in the last term in the right-hand
side of~\eqref{empirical_deviation} comes from the Pisier's
inequality, cf.~(13) of~\cite{A08}.

Finally, combining~\eqref{empirical_expect} and~\eqref{empirical_deviation},
we obtain~\eqref{LD_iid_SLT}, and, as observed before,
this is enough to conclude the proof 
of Lemma~\ref{l_LD_SLT}.
\end{proof}

Next, we need  a fact that one may call the \emph{consistency}
of soft local times. Assume that we need to construct
excursions of some process (i.e., random walk, random interlacements,
or just independent excursions) between~$\partial A$
and~$\partial A'$; let~$(\hL_k(y), y\in\partial A)$
be the soft local time of $k$th excursion (of random interlacements,
for definiteness).
On the other hand, we may be interested in
simultaneously constructing 
the excursions also between~$\partial A_1$
and~$\partial A'_1$, where $A'\cap A'_1=\emptyset$
and~$A_1\subset A'_1$.  Let
$(\hL^*_k(y), y\in\partial A)$ be the soft local
time at the moment when $k$th excursion between~$\partial A$
and~$\partial A'$ was chosen in this latter
construction. We need the following
simple fact:
\begin{lem}
\label{l_consistency_SLT}
It holds that 
\[
 \big(\hL_k(y), y\in\partial A\big) 
         \eqlaw \big(\hL^*_k(y), y\in\partial A\big)
\]
for all~$k\geq 1$.
\end{lem}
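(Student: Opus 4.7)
The plan is to reduce the equality in law to an equality of the joint laws of the two lists of inputs that define the soft local time via~\eqref{df_SLT_RI}. Concretely, I would rewrite
\[
\hL_k(y) = \sum_{j=1}^{k} \hat\xi_j\, \hphi\big(\ell(\hZ^{(j-1)}), y\big),
\]
with the convention $\ell(\hZ^{(0)}) := x_0$, and note that the analogous formula holds for $\hL^*_k$ in the joint construction (with the corresponding Exp$(1)$ weights and $A$--$A'$ excursions read off from that construction). Since $\hL_k$ is a deterministic function of $\big((\hat\xi_j)_{j=1}^k,\,(\ell(\hZ^{(j-1)}))_{j=1}^k\big)$, it suffices to show that this list has the same joint distribution in both constructions.

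First I would argue that in both constructions the sequence $\big((\iota(\hZ^{(j)}), \ell(\hZ^{(j)}))\big)_{j\geq 1}$ is a Markov chain with the same transition kernel $\widehat{P}$ displayed just below~\eqref{df_hat_phi}. That kernel involves only $\hphi$ and $\IP_\cdot\big[\s_{\htau_1(\partial A')}=\cdot\big]$, so it is insensitive to the presence of $A_1, A_1'$; the common starting point $x_0$ also fixes the initial distribution. Hence the endpoint sequences $(\ell(\hZ^{(j-1)}))_{j=1}^k$ from the two constructions agree in law.

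Next I would verify that in each construction the weights $\hat\xi_j$ driving the $A$--$A'$ soft local time form an i.i.d.\ Exp$(1)$ sequence independent of the $A$--$A'$ excursions. This is precisely the content of the itemized soft local time recipe recalled before~\eqref{df_SLT_torus}: the $\hat\xi_j$'s are the successive vertical gaps used to catch points of a rate-$1$ marked Poisson process on $\partial A \times \R_+$, and feeding an \emph{additional} independent marked Poisson process on $\partial A_1 \times \R_+$ (to simultaneously build the $A_1$--$A_1'$ excursions) neither consumes any point of the $\partial A$-process nor perturbs its gaps. Combined with the previous paragraph, this yields the required equality in joint law, and hence $\hL_k \eqlaw \hL^*_k$.

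The lemma is essentially a bookkeeping statement expressing that the soft local time at scale $A$--$A'$ depends only on that scale's excursion data, so no serious obstacle arises. The only point requiring attention is to confirm that the two soft local time machineries in the joint construction are driven by independent Poisson processes on $\partial A \times \R_+$ and $\partial A_1 \times \R_+$, which is how the simultaneous construction is naturally set up (cf.~Section~2 of~\cite{CGPV13} and the general framework of~\cite{SLT}).
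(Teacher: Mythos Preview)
Your argument has a genuine gap: the formula you assume for $\hL^*_k$ does not hold in the joint construction. In the simultaneous construction one runs a \emph{single} soft local time on $\partial A \cup \partial A_1$, using the joint entrance density $\hphi^{\mathrm{joint}}(x,\cdot)$ (entrance into $A\cup A_1$). Between the $(j{-}1)$th and the $j$th $A$--$A'$ excursions there may be several $A_1$--$A'_1$ excursions, and each of them increments the soft local time on~$\partial A$ by a term $\xi^{\mathrm{joint}}_i\,\hphi^{\mathrm{joint}}(\ell_{i-1},y)$ with $\ell_{i-1}\in\partial A'_1$. Hence $\hL^*_j(y)-\hL^*_{j-1}(y)$ is a sum of a \emph{random number} of such terms, and its shape as a function of $y\in\partial A$ depends on the intermediate $A_1$-endpoints; it is \emph{not} of the form $\hat\xi^*_j\,\hphi(\ell(\hZ^{*,(j-1)}),y)$ for a single Exp$(1)$ variable and the pure-$A$ density~$\hphi$. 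Your sentence that ``feeding an additional independent marked Poisson process on $\partial A_1\times\R_+$ \dots\ neither consumes any point of the $\partial A$-process nor perturbs its gaps'' describes two independent constructions run side by side, which is trivially the same as the single construction and is not what $\hL^*_k$ refers to. So the very decomposition you start from, and the independence claim in your step~3, are precisely what is in question.

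The paper proceeds differently. It uses the memoryless property of the Poisson process: once the first $A$-point has been collected, the points of the $\partial A$-process lying above the current profile form a fresh rate-$1$ Poisson process, so it suffices to show $\hL_1\eqlaw\hL^*_1$ (together with equality in law of the first $A$-excursion, which fixes the restart point). The base case is then reduced to the fact that $\hZ^{(1)}\eqlaw\hZ^{(1),*}$, i.e.\ that both constructions produce the correct law for the first $A$--$A'$ excursion. This inductive reduction avoids having to exhibit any explicit $k$-term formula for~$\hL^*_k$.
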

\begin{proof}
First, due to the memoryless property of the 
Poisson process,  it is clearly enough to prove that
$\hL_1\eqlaw\hL^*_1$. This, by its turn, can be easily 
obtained from the fact that $\hZ^{(1)} \eqlaw \hZ^{(1),*}$,
where $\hZ^{(1)}$ and $\hZ^{(1),*}$ are the first excursions
between~$\partial A$ and~$\partial A'$
chosen in both constructions.
\end{proof}

Also, we need to be able to control the number of excursions~$N^*_t$
up to time~$t$ on the torus~$\Z^2_n$ between~$\partial\B(\gamma_1n)$
and~$\partial\B(\gamma_2n)$, $\gamma_1<\gamma_2<1/2$. 
%Define $N^*_t$ to be    (depends on~$n_1$)
\begin{lem}
\label{l_number_exc_torus}
 For all large enough $n$, all $t\geq n^2$
 and all $\delta >0$ we have
\begin{equation}
\label{eq_number_exc_torus} 
 \IP\Big[(1-\delta)\frac{\pi t}{2n^2 \ln(\gamma_2/\gamma_1)}
\leq N^*_t 
\leq (1+\delta)\frac{\pi t}{2n^2 \ln(\gamma_2/\gamma_1)}\Big] 
\geq 1- c_1 \exp\Big(- \frac{c_2\delta^2 t}{n^2}\Big),
\end{equation}
where $c_{1,2}$ are positive constants depending on $\gamma_{1,2}$.
\end{lem}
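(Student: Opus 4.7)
The plan is to identify the leading-order rate for $N^*_t$ via time-reversal plus Lemma~\ref{l_escape_to}, then upgrade to the concentration~\eqref{eq_number_exc_torus} using a regenerative decomposition and Bernstein's inequality.

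For the mean (Step~1), I would use that SRW on $\Z^2_n$ is reversible with uniform stationary measure $\pi(x)=n^{-2}$. The stationary rate of ``excursion starts'' (visits to $\partial\B(\gamma_1 n)$ whose previous hit of $\partial\B(\gamma_1 n)\cup\partial\B(\gamma_2 n)$ was on $\partial\B(\gamma_2 n)$) equals, by time-reversal,
\[
r=\sum_{x\in\partial\B(\gamma_1 n)}\pi(x)\,\IP_x\bigl[\tau_1(\partial\B(\gamma_2 n))<\tau_1(\B(\gamma_1 n))\bigr].
\]
Lemma~\ref{l_escape_to} evaluates the inner probability as $\hm_{\B(\gamma_1 n)}(x)/\bigl(\tfrac{2}{\pi}\ln(\gamma_2/\gamma_1)+O(n^{-1})\bigr)$, and summing over $x$ (using $\sum_x\hm_{\B(\gamma_1 n)}(x)=1$) gives $r=\tfrac{\pi}{2n^2\ln(\gamma_2/\gamma_1)}(1+o(1))$. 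Hence $\IE N^*_t\sim rt=\tfrac{\pi t}{2n^2\ln(\gamma_2/\gamma_1)}$.

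For the concentration (Step~2), I would decompose the trajectory into successive cycles $C_j$, where $C_j$ is the time between the starts of the $j$-th and $(j+1)$-th excursions. The $C_j$ form a positive-recurrent Markov chain indexed by the start point $X_{T_{j-1}}\in\partial\B(\gamma_1 n)$. Two properties are needed: \emph{(i)} uniform expectation $\IE[C_j\mid X_{T_{j-1}}=x]\sim\tfrac{1}{r}=\tfrac{2n^2}{\pi}\ln(\gamma_2/\gamma_1)$; and \emph{(ii)} sub-exponential tails $\IP[C_j>sn^2\mid X_{T_{j-1}}=x]\leq Ce^{-cs}$ for $s\geq 1$. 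Property~(i) is inherited from Step~1 applied pointwise. Property~(ii) follows from Lyapunov-type bounds: optional stopping with $a(\cdot)$ controls the outward leg, and iterating a one-step hitting estimate on mesoscopic balls of radius $\sim n/\log n$ (using the escape bounds of Lemmas~\ref{l_exit_balls}--\ref{l_annulus_escape}) controls the return leg. Coupling $(C_j)$ with an i.i.d.\ sequence via Nummelin splitting and applying Bernstein's inequality for sub-exponential sums to $\sum_{j=1}^K C_j$ yields
\[
\IP\Bigl[\bigl|\textstyle\sum_{j=1}^K C_j-K\,\IE C\bigr|>\delta K\,\IE C\Bigr]\leq c_1 e^{-c_2\delta^2 K},
\]
and the renewal inversion $\{N^*_t\geq K\}=\{\sum_{j\leq K}C_j\leq t\}$ together with $K\asymp t/n^2$ translates this to~\eqref{eq_number_exc_torus}.

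The main obstacle I anticipate is property~(ii) for the return leg on the torus: after reaching $\partial\B(\gamma_2 n)$, the walker may diffuse across all of~$\Z^2_n$, and the natural return-time scale is $n^2\log n$ rather than~$n^2$. Handling this carefully requires using the mixing bounds for SRW on $\Z^2_n$ to split the return leg into $O(\log n)$ sub-exponential pieces of scale $n^2$, or, alternatively, bounding the return time by geometrically many attempts to hit a mesoscopic target, each succeeding with uniformly positive probability over $O(n^2)$ steps. Once property~(ii) is in place, the quantitative match between the target exponent $\delta^2 t/n^2$ and a Bernstein bound on $K\asymp t/n^2$ weakly-dependent cycles of expected length $\asymp n^2$ shows the approach is sharp.
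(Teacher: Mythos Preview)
Your overall strategy matches the paper's: compute the stationary rate of excursion starts via reversibility and Lemma~\ref{l_escape_to}, then run a renewal/regenerative argument on the cycle lengths between successive excursion starts, with sub-exponential tails at scale~$n^2$. The paper's proof is exactly this, only sketched: it introduces the set~$J$ of excursion-start times, computes $\IP[0\in J]$ by reversibility, reads off the mean cycle length~\eqref{expect_sig-sig}, and then invokes a renewal argument as in the proof of Lemma~\ref{l_LD_SLT}.

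Where you diverge from the paper is in your assessment of the ``main obstacle''. There is none: the return-time scale is $n^2$, not $n^2\ln n$. The target $\B(\gamma_1 n)$ is a \emph{macroscopic} set occupying a fixed fraction $\pi\gamma_1^2$ of the torus, not a mesoscopic one. From any starting point, the walk hits $\B(\gamma_1 n)$ within $cn^2$ steps with probability bounded below by a constant depending only on~$\gamma_1$ (e.g.\ by the local CLT, or simply because in $cn^2$ steps the walk's position is close to uniform). Iterating gives $\IP[C_j>sn^2\mid\cdot]\leq e^{-cs}$ directly, with no mixing bounds, no Lyapunov functions, and no splitting into $O(\ln n)$ pieces. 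The paper says precisely this: ``in each interval of length~$O(n^2)$ there is at least one complete excursion with uniformly positive probability (so there is no need to apply the Khasminskii's lemma)''. Your proposed machinery (Nummelin splitting, Bernstein for sub-exponential sums) would work, but is heavier than required; the paper treats this lemma as essentially routine once~\eqref{expect_sig-sig} is in hand.
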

\begin{proof}
Note that there is a much more general result on the large
deviations of the excursion counts for the Brownian motion
(the radii of the concentric disks need not be of order~$n$),
see Proposition~8.10 of~\cite{BK14}. So, we give the proof 
of Lemma~\ref{l_number_exc_torus} in a rather sketchy way.
First, let us rather work with the \emph{two-sided} stationary version
of the walk $X=(X_j, j\in\Z)$ (so that~$X_j$ is uniformly distributed
on~$\Z^2_n$ for any~$j\in\Z$). For $x\in\partial\B(\gamma_1 n)$ 
define the set 
\begin{align*}
 J_x &= \big\{k\in \Z : X_k=x, \text{ there exists }i<k 
\text{ such that }
   X_i\in \partial\B(\gamma_2 n) \\
   & \qquad\qquad\qquad \qquad\qquad\text{ and }
    X_m\in \B(\gamma_2 n)\setminus 
      (\B(\gamma_1 n)\cup\partial\B(\gamma_2 n)) 
       \text{ for } i<m<k\big\},
\end{align*}
and let $J=\bigcup_{x\in \partial\B(\gamma_1 n)} J_x$.
Now, Lemma~\ref{l_escape_to} together with
the reversibility argument used in Lemma~6.1 of~\cite{CT14}
imply that 
\[
 \IP[0\in J_x] = \IP[X_0=x]
 \IP_x\big[\tau_1(\partial\B(\gamma_2 n))<\tau_1(\B(\gamma_1 n))\big]
      =  n^{-2}\frac{\hm_{\B(\gamma_1 n)}(x)}{\frac{2}{\pi}
        \ln \frac{\gamma_2}{\gamma_1} + O(n^{-1})},
\]
so (since $\hm_{\B(\gamma_1 n)}(\cdot)$ sums to~$1$)
\begin{equation}
\label{0_in_J}
 \IP[0\in J] = \Big(\frac{2n^2}{\pi}
        \ln \frac{\gamma_2}{\gamma_1} + O(n)\Big)^{-1}.
\end{equation}
Let us write $J=\{\sigma_j, j\in\Z\}$, 
where $\sigma_{-1}<0$, $\sigma_0\geq 0$,
and $\sigma_j<\sigma_{j+1}$ for all $j\in\Z$.
As noted just after \eqref{df_hm_set_RW}--\eqref{df_hm_set_RI},
the invariant entrance measure to~$\B(\gamma_1 n)$ for excursions
is $\nu =\hm_{\B(\gamma_1 n)}^{\B(\gamma_2 n)}$.
Let $\IE_\nu^*$ be the expectation for the walk
with~$X_0\sim\nu$ and conditioned on~$0\in J$ (that
is, for the \emph{cycle-stationary} version of the walk).
 Then, in a standard way one obtains from~\eqref{0_in_J} that
\begin{equation}
\label{expect_sig-sig}
 \IE_\nu^* \sigma_1
  = \IE_\nu^* (\sigma_1-\sigma_0)
    = \frac{2n^2}{\pi}
        \ln \frac{\gamma_2}{\gamma_1} + O(n).
\end{equation}
% where $ \hm_{\B(\gamma_1 n)}^{\B(\gamma_2 n)}$.
Note also that in this setup
(radii of disks of order~$n$) it is easy to control
the tails of $\sigma_1-\sigma_0$ since in each interval 
of length~$O(n^2)$ there is at least one complete excursion
with uniformly positive probability (so
there is no need to apply the Khasminskii's lemma\footnote{see
e.g.\ the argument between~(8.9) and~(8.10) of~\cite{BK14}}, 
as one usually does for proving results on large
deviations of excursion counts).
To conclude the proof of Lemma~\ref{l_number_exc_torus},
it is enough to apply a renewal argument similar to the one
used in the proof of Lemma~\ref{l_LD_SLT}
(and in Section~8 of~\cite{BK14}).
\end{proof}

\section{Proofs of the main results}
\label{s_proofs}

\begin{proof}[\textbf{Proof of Proposition~\ref{p_SRW_Hoelder}}]
Fix some $x,y,z$ as in the statement of the proposition. 
We need the following fact:
\begin{lem}
\label{l_GH_revers}
We have
\begin{equation}
\label{revers_yz}
 H_{A_n}(x,u) = \IE_u \sum_{j=1}^{\tau_1(\partial A_n)}
    \1{S_j=x} 
= \frac{1}{2d}\sum_{\substack{v\sim u:\\v\in A_n\setminus \partial A_n}}
  G_{A_n}(v,x)
\end{equation}
(that is, $H_{A_n}(x,u)$ equals
the mean number of visits to~$x$ before hitting~$\partial A_n$,
starting from~$u$)
for all $u\in \partial\B(n)$.
\end{lem}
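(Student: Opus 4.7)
The plan is to prove the two equalities in \eqref{revers_yz} separately, handling the rightmost one by a one-step decomposition and the other by time-reversal of simple random walk.

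For the middle-to-right equality, I would condition on $S_1$. Starting at $u\in\partial A_n$, its random neighbour $S_1=v$ lies in one of three regions. If $v\in\partial A_n$, then $\tau_1(\partial A_n)=1$ and no visit to $x$ is counted (because $x\notin \partial A_n$). If $v\notin A_n$, the key observation is that every interior point of $A_n$ has all its neighbours in $A_n$ by the very definition of $\partial A_n$; hence the walk cannot jump from $\Z^d\setminus A_n$ directly into $A_n\setminus\partial A_n$, and must touch $\partial A_n$ on its way back, again contributing nothing to the sum. If $v\in A_n\setminus\partial A_n$, the strong Markov property identifies the conditional expectation with $G_{A_n}(v,x)$. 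Summing with weights $(2d)^{-1}$ yields the right-hand side.

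For the left-to-middle equality, I would use a path-level time-reversal. Since $x\notin \partial A_n$, one expands
\[
\IE_u\sum_{j=1}^{\tau_1(\partial A_n)}\1{S_j=x}
 = \sum_{m\geq 1}\sum_{\omega}(2d)^{-m},
\]
where the inner sum is taken over nearest-neighbour paths $\omega=(u=\omega_0,\omega_1,\ldots,\omega_m=x)$ with $\omega_1,\ldots,\omega_{m-1}\notin\partial A_n$ (which, by the same ``interior has all neighbours inside $A_n$'' observation, forces $\omega_1,\ldots,\omega_{m-1}\in A_n\setminus\partial A_n$). Time-reversal of each such path gives a weight-preserving bijection with paths from $x$ to $u$ that stay in the interior of $A_n$ until the final step; summing over $m$ produces exactly $\IP_x[S_{\tau_0(\partial A_n)}=u]=H_{A_n}(x,u)$.

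Alternatively, one can first establish the last-step decomposition $H_{A_n}(x,u)=(2d)^{-1}\sum_{v\sim u,\,v\in A_n\setminus\partial A_n}G_{A_n}(x,v)$ in the standard way, and then invoke the symmetry $G_{A_n}(x,v)=G_{A_n}(v,x)$ of the Green's function of SRW killed on $\partial A_n$ (which follows from the fact that the SRW transition kernel is symmetric), to match this to the rightmost expression in \eqref{revers_yz}. The only subtle point, common to both arguments, is the interior-neighbour observation above; beyond that, no genuine obstacle is expected.
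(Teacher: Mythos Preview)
Your proposal is correct and uses essentially the same idea as the paper's proof: a path-level time-reversal of the simple random walk (the ``standard reversibility argument''). The paper writes $H_{A_n}(x,u)$ as a sum over nearest-neighbour paths from~$x$ to~$u$ avoiding $\partial A_n$ until the last step, reverses them, groups by the number $k(\vr)$ of visits to~$x$, and then identifies the resulting tail sum $\sum_{j\geq 1}\IP_u[\text{at least $j$ visits to }x]$ with the expected visit count. Your version is a bit more direct: you expand the expectation by Fubini as $\sum_{m\geq 1}\IP_u[S_m=x,\;S_1,\dots,S_{m-1}\notin\partial A_n]$ and reverse each contributing path, avoiding the visit-count bookkeeping entirely. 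Your separate one-step argument for the rightmost equality and the alternative route via the last-exit decomposition together with the symmetry $G_{A_n}(x,v)=G_{A_n}(v,x)$ are also fine; the paper does not spell these out but they are implicit.
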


\begin{proof}
This follows from a standard reversibility argument.
Indeed, write (the sums below are over all nearest-neighbor 
trajectories beginning in~$x$ and ending in~$u$ that do not 
touch~$\partial A_n$ before entering~$u$; $\vr^*$ stands for
$\vr$ reversed, $|\vr|$ is the number of edges in~$\vr$,
 and $k(\vr)$ is the number of times~$\vr$
was in~$x$)
\begin{align*}
  H_{A_n}(x,u) &= \sum_{\vr}  (2d)^{-|\vr|}\\
 &= \sum_{\vr}  (2d)^{-|\vr^*|}\\
 & = \sum_{j=1}^\infty \sum_{\vr: k(\vr)=j}(2d)^{-|\vr^*|},
\end{align*}
and observe that the $j$th term in the last line 
is equal to the probability  that~$x$ is visited
at least~$j$ times (starting from~$u$) before hitting~$\partial A_n$.
This implies~\eqref{revers_yz}.
\end{proof}

Note that, by Lemma~\ref{l_G_entrance} we have also 
\begin{align}
\frac{c_1}{n^{d-1}} \leq  H_{A_n}(x,u) \leq \frac{c_2}{n^{d-1}},
\label{c/n_Poisson}\\
\intertext{and, as a consequence (since $\hm_{\B(n)}(u)$
is a convex combination in~$x'\in \partial\B((1+2\eps)n)$ of $H_{A_n}(x',u)$)}
\frac{c_1}{n^{d-1}} \leq  \hm_{\B(n)}(u) \leq \frac{c_2}{n^{d-1}}
\label{c/n_harmonic}
\end{align}
for all $u\in \partial\B(n)$. Therefore, without
restricting generality we may assume that $\|y-z\|\leq (\eps/9)n$,
since if $\|y-z\|$ is of order~$n$, then~\eqref{eq_SRW_Hoelder}
holds for large enough~$C$.

So, using Lemma~\ref{l_GH_revers}, we can estimate the difference
between the mean numbers of visits to one fixed site
in the interior of the annulus starting from
two close sites at the boundary,
instead of dealing with hitting probabilities
of two close sites starting from that fixed site.

Then, to obtain~\eqref{eq_SRW_Hoelder}, we proceed in the following way. 
\begin{itemize}
 \item[(i)] Observe that, to go from a site~$u\in\partial\B(n)$ to~$x$, 
the particle needs to go first to~$\partial\B((1+\eps)n)$;
we then prove that the probability of that is ``almost'' proportional 
to~$\hm_{\B(n)}(u)$, see~\eqref{escape_to_1+eps}.
 \item[(ii)] In~\eqref{def_tilde_P} we introduce
% if $y,z\in\partial\B(n)$ are ``close'' to each other,
two walks conditioned on hitting~$\partial\B((1+\eps)n)$
before returning to~$\partial\B(n)$,
starting from $y,z\in\partial\B(n)$. The idea is that they
 will likely couple before reaching~$\partial\B((1+\eps)n)$.
 \item[(iii)] More precisely, we prove that each time the distance
between the original point on~$\partial\B(n)$ and
the current position of the (conditioned) walk is doubled,
there is a uniformly positive chance that the coupling
of the two walks succeeds (see the argument just after~\eqref{always_Harnack}).
% this gives rise to the term
% in the right-hand side of~\eqref{eq_SRW_Hoelder}.
 \item[(iv)] To prove the above claim, we define two sequences
 $(U_k)$ and $(V_k)$ of subsets of the annulus
 $\B((1+\eps)n)\setminus \B(n)$,
 as shown on Figure~\ref{f_cond_coupling}. Then, we prove that
 the positions of the two walks on first hitting of~$V_k$ can be 
 coupled with uniformly positive probability,
regardless of their positions on first hitting of~$V_{k-1}$.
For that, we need two technical steps:
 \begin{itemize}
 \item[(iv.a)] If one of the two conditioned walks hits~$V_{k-1}$
 at a site which is ``too close'' to $\partial\B(n)$ (look at the 
 point~$Z_{k-1}$ on Figure~\ref{f_cond_coupling}), we need to assure 
 that the walker can go ``well inside'' the set~$U_k$ with at least
constant probability (see~\eqref{escape_box_Psi}).
 \item[(iv.b)] If the (conditioned) walk is already ``well inside''
the set~$U_k$, then one can apply the Harnack inequality to prove 
that the exit probabilities are comparable 
in the sense of~\eqref{always_Harnack}.
 \end{itemize}
 \item[(v)] There are $O\big(\ln\frac{n}{\|y-z\|}\big)$ ``steps'' on the 
 way to~$\partial\B((1+\eps)n)$, and the coupling is successful
 on each step with uniformly positive probability. So, 
 in the end the coupling fails with probability polynomially
 small in $\frac{n}{\|y-z\|}$,
 cf.~\eqref{prob_Upsilon}.
 \item[(vi)] Then, it only remains to gather the pieces 
 together (the argument after~\eqref{exit_end}). The last technical
 issue is to show that, even if the coupling does not succeed,
 the difference of expected hit counts cannot be too large;
 this follows from~\eqref{capacity_ball_3}
and Lemma~\ref{l_G_annulus}.
\end{itemize}
%  From now on, we work in dimension $d=2$; in the end of the proof
% we discuss how to adapt it to $d\geq 3$.
We now pass to the detailed arguments.
By Lemma~\ref{l_escape_to} we have for any~$u\in\partial\B(n)$
\begin{equation}
\label{escape_to_1+eps}
 \IP_u\big[\tau_0(\partial\B((1+\eps)n))< \tau_1(\partial\B(n))\big] 
= 
  \begin{cases}
    \displaystyle\frac{\hm_{\B(n)}(u)}
       {\frac{2}{\pi}\ln(1+\eps)+O(n^{-1})},
       & \text{for }d=2,\\
    \displaystyle\frac{\capa(\B(n))\hm_{\B(n)}(u)}
    {1-(1+\eps)^{-(d-2)}+O(n^{-1})}
     \vphantom{\int\limits^{A^B}}, & \text{for }d\geq 3,
  \end{cases}
\end{equation}
so, one can already notice that the probabilities
to escape to~$\partial\B((1+\eps)n)$ normalized by the 
harmonic measures are roughly the same for all sites of~$\partial\B(n)$.
Define the events
\begin{equation}
\label{df_event_F}
F_j=\big\{\tau_0(\partial\B((1+\eps)n)) < \tau_j(\partial\B(n))\big\}
\end{equation}
for $j=0,1$.
For $v\in \B((1+\eps)n)\setminus \B(n)$ denote~$h(v) = \IP_v[F]$;
clearly, $h$ is a harmonic function inside 
the annulus $\B((1+\eps)n)\setminus \B(n)$, and the simple
random walk on the annulus conditioned on~$F_0$ is in fact
a Markov chain (that is, the Doob's $h$-transform of the simple
random walk) with transition probabilities
\begin{equation}
\label{def_tilde_P}
\tP_{v,w} = 
   \begin{cases}
    \displaystyle\frac{h(w)}{2d h(v)}, & v\in \B((1+\eps)n)\setminus 
         (\B(n)\cup \partial \B((1+\eps)n)), w\sim v,\\
       0, & \text{otherwise}.
   \end{cases}
\end{equation}
On the first step (starting at~$u\in\partial\B(n)$), the transition
probabilities of the conditioned walk
are described in the following way: the walk
goes to $v\notin\B(n)$ with probability
\[
 h(v)\Bigg(\sum_{\substack{v'\notin\B(n):\\ v'\sim u}}h(v')\Bigg)^{-1}.
\]

\begin{figure}
\begin{center}
\includegraphics{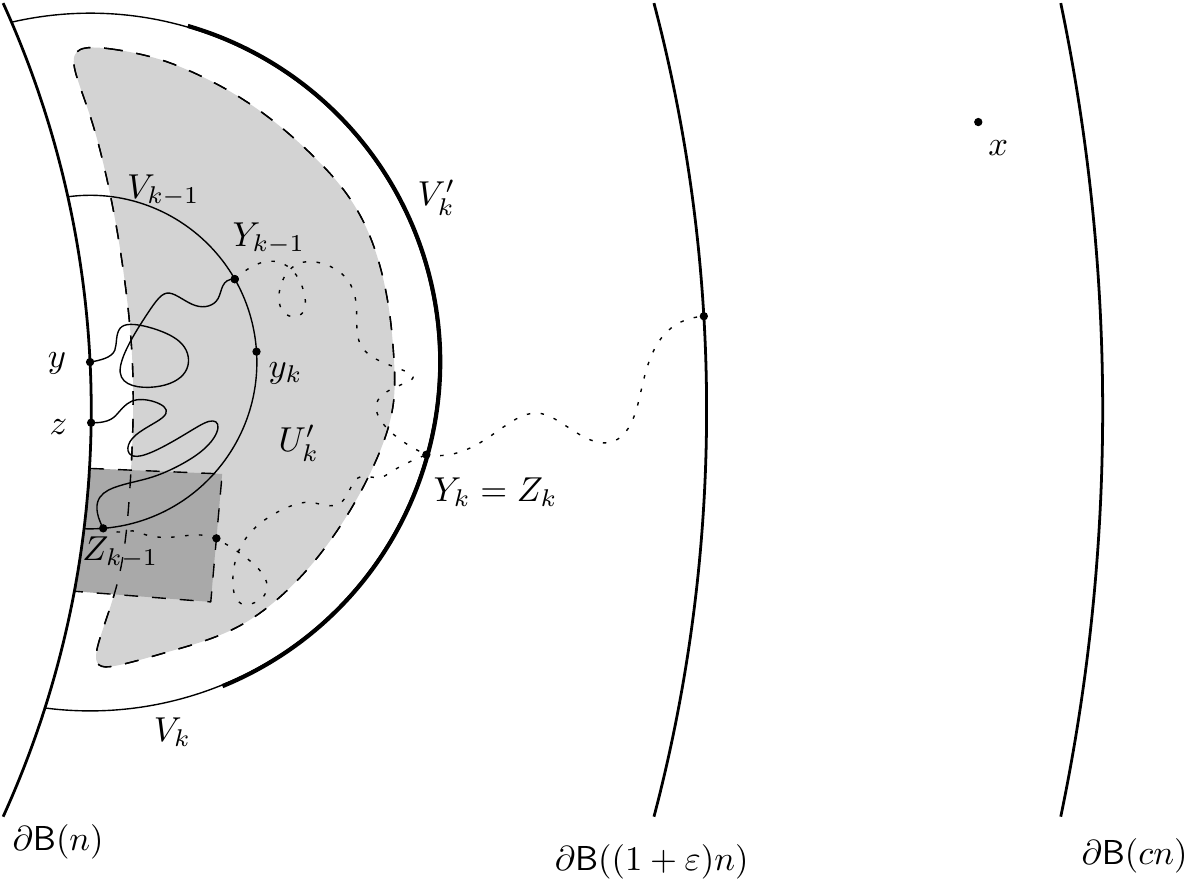}
\caption{On the coupling of conditioned walks
 in the proof of Proposition~\ref{p_SRW_Hoelder}. Here,
 $Y_{k-1}$ and $Z_{k-1}$ are positions of the walks started
 in~$y$ and~$z$, and we want to couple their exit points on~$V'_k$.
 The $y$-walk is already in~$U'_k$, but we need to force the $z$-walk
 to advance to~$U'_k$ in the set $\Psi(Z_{k-1},\|y-z\|2^{k-1})$
 (dark grey
 on the picture), so that the Harnack inequality would be applicable.}
\label{f_cond_coupling}
\end{center}
\end{figure}

Let $k_0=\max\{j: 3\|y-z\|2^j < \eps n\}$, and let us define
the sets 
\begin{align*}
 U_k &= \B(y, 3\|y-z\|2^k) \setminus 
 \big(\B(n)\setminus\partial\B(n)\big),\\
V_k &= \partial U_k \setminus \partial\B(n),
\end{align*}
for $k=1,\ldots, k_0$, see Figure~\ref{f_cond_coupling}.
Also, define~$y_k$ to be the closest integer point to 
$y+3\|y-z\|2^k\frac{y}{\|y\|}$.
Clearly, it holds that 
\begin{equation}
\label{k0>}
 k_0 \geq c_2\ln\frac{c_3\eps n}{\|y-z\|}.
\end{equation}
% Let $y',z'\in \B((1+\eps)n)\setminus \B(n)$ be neighbors
% of~$y$ and~$z$, correspondingly.
Denote by $\tS^{(1)}$ and $\tS^{(2)}$
the conditioned random walks started from~$y$ and~$z$.
For $k=1,\ldots,k_0$ denote $Y_k=\tS^{(1)}_{\ttau_1(V_k)}$,
$Z_k=\tS^{(2)}_{\ttau_1(V_k)}$,
where~$\ttau_1$ is the hitting time for the $\tS$-walks, 
defined as in~\eqref{hitting_t}.
The goal is to couple $(Y_1,\ldots,Y_{k_0})$ and 
$(Z_1,\ldots,Z_{k_0})$ in such a way that with high
probability there exists $k_1\leq k_0$ such that
$Y_j=Z_j$ for all $j=k_1,\ldots,k_0$;
we denote by~$\Upsilon$ the corresponding
coupling event. Clearly, this
generates a shift-coupling of~$\tS^{(1)}$ and $\tS^{(2)}$;
if we managed to shift-couple them before they 
reach~$\partial \B((1+\eps)n)$, then the number of visits
to~$x$ will be the same. 

For $v\in\R^d$ let~$\ell_v=\{r v, r>0\}$ be the ray
in $v$'s direction. Now, for any~$v$ with $n<\|v\|\leq (1+\eps)n$
and $s\in (0,\eps n)$ define the (discrete) set 
\[
 \Psi(v,s) = \big\{u\in \Z^d : n<\|u\|\leq n+s,
   \dist(u,\ell_v)\leq s/2\big\}. 
\]
Denote also by
\[
 \partial^+ \Psi(v,s) = \big\{u\in \partial\Psi(v,s) : 
      n+s-1<  \|u\|\leq n+s\big\}
\]
the ``external part'' of the boundary of $\Psi(v,s)$
(on Figure~\ref{f_cond_coupling}, it is the rightmost side 
of the dark-grey ``almost-square'').
Observe that, by Lemma~\ref{l_annulus_escape},
we have
\begin{equation}
\label{harmonic_bounds}
 c_5\frac{\|v\|-n+1}{n} \leq h(v)\leq c_6\frac{\|v\|-n+1}{n}.
\end{equation}
We need the following simple fact: if $\|v\|-n<2s$,
\begin{equation}
\label{escape_box_Psi}
 \IP_v\big[\tS_{{\tilde \tau}(\partial\Psi(v,s))}
   \in \partial^+ \Psi(v,s)\big] \geq c_7 
\end{equation}
for some positive constant~$c_7$.
To see that, it is enough 
to observe that the probability of the corresponding
event for the simple random walk~$S$ is $O(\frac{\|v\|-n+1}{s})$
(roughly speaking, the part transversal to~$\ell_v$ behaves as
a $(d-1)$-dimensional simple random walk, so it does not 
go too far from~$\ell_v$ with constant probability,
 and the probability that the projection on~$\ell_v$
``exits to the right'' is clearly $O(\frac{\|v\|-n+1}{s})$
by a gambler's ruin-type argument; 
or one can use prove
an analogous fact for the Brownian motion and then use the 
KMT-coupling). Now (recall~\eqref{def_tilde_P})
the weight of an $\tS$-walk trajectory
is its original weight divided by the value of~$h$
in its initial site, and multiplied by the value of~$h$
in its end. But (recall~\eqref{harmonic_bounds}), the value
of the former is $O(\frac{\|v\|-n+1}{n})$,
and the value of the latter is $O(\frac{s}{n})$.
Gathering the pieces, we obtain~\eqref{escape_box_Psi}.

Note also the following: let~$A$ be a subset
of $(\B((1+\eps)n)\setminus\B(n))\cup\partial\B(n)$, and
for $u\in A, v\in\partial A$
denote by $\tH_A(u,v) 
= \IP_u\big[\tS_{\ttau_1(\partial A)}=v\big]$
the Poisson kernel 
with respect to the conditioned walk~$\tS$.
Then, it is elementary to obtain that $\tH_A(u,v)$
is proportional to $h(v) H_A(u,v)$, that is
\begin{equation}
\label{tilde_H}
 \tH_A(u,v) = h(v) H_A(u,v) 
    \Big(\sum_{v'\in \partial A}h(v') H_A(u,v')\Big)^{-1}.
\end{equation}

Now, we are able to construct the coupling.
Denote by $V'_k= \{v\in V_k : \|v\|\geq n+3\|y-z\|2^{k-1}\}$
to be the ``outer'' part of~$V_k$ (depicted on 
Figure~\ref{f_cond_coupling}
as the arc with double thickness), and 
denote by $U'_k = \{u\in U_k : \dist(u,\partial U_k)\geq \|y-z\|2^{k-3}\}$
the ``inner region'' of~$U_k$. Using~\eqref{harmonic_bounds}
and~\eqref{tilde_H} together
with the Harnack inequality (see e.g.\ Theorem~6.3.9 of~\cite{LL10}), 
we obtain that, for some~$c_8>0$
\begin{equation}
\label{go_out_from_inside}
 \tH_{U_k}(u,v) \geq c_8 \tH_{U_k}(y_k,v)
\end{equation}
for all $u\in U'_k$ and $v\in V'_k$. The problem is that~$Z_{k-1}$
(or $Y_{k-1}$, or both)
may be ``too close'' to~$\partial\B(n)$, and so we need to ``force''
it into~$U'_k$ in order to be able to apply the Harnack inequality. 
First, from an elementary geometric argument
one obtains that, for any $v\in V_{k-1}\setminus U'_k$
\begin{equation}
\label{partial+Psi_inside}
\partial^+ \Psi(v,\|y-z\|2^{k-1})\subset U'_k.
\end{equation}
Then, \eqref{escape_box_Psi} and~\eqref{partial+Psi_inside}
together imply that indeed with uniformly positive 
probability an $\tS$-walk started from~$v$ enters~$U'_k$
before going out of~$U_k$. 
Using~\eqref{go_out_from_inside},
% Using~\eqref{tilde_H}
% and the Harnack inequality (e.g.\ Theorem~6.3.9 of~\cite{LL10}),
we then obtain that
\begin{equation}
\label{always_Harnack}
 \tH_{U_k}(u,v) \geq c_9 \tH_{U_k}(y_k,v)
\end{equation}
for all $u\in V_{k-1}$ and $v\in V'_k$. Also, it is clear 
 that $\sum_{v\in V'_k}\tH_{U_k}(y_k,v)$
is uniformly bounded below by a constant $c_{10}>0$,
 so on each step $(k-1)\to k$
the coupling works with probability at least $c_9c_{10}$.
Therefore, by~\eqref{k0>}, we can couple $(Y_1,\ldots,Y_{k_0})$ and 
$(Z_1,\ldots,Z_{k_0})$ in such a way that $Y_{k_0}=Z_{k_0}$
with probability at least 
$1-(1-c_9c_{10})^{k_0}=1-c_{11} \big(\frac{n}{\|y-z\|}\big)^{-\beta}$.

%  Lemma~\ref{l_G_annulus} implies that the mean number of visits
% to~$x$ is of the same order for any starting 
% point from~$\partial \B((1+\eps)n)$. 
% So, if the coupling fails, this entails at most a constant
% factor difference between the expected number of visits to~$x$
% for the two walks (that start from~$y$ and~$z$).

Now, we are able to finish the proof of Proposition~\ref{p_SRW_Hoelder}. 
Recall that we denoted by~$\Upsilon$ the coupling
event of the two walks (that start from~$y$ and~$z$);
as we just proved,
\begin{equation}
\label{prob_Upsilon}
  \IP[\Upsilon^\complement] 
       \leq c_{11}\Big(\frac{n}{\|y-z\|}\Big)^{-\beta}.
\end{equation}
Let $\nu_{1,2}$ be the exit measures of the two walks
on~$\partial \B((1+\eps)n)$.
 For $j=1,2$ we have for any $v\in\partial \B((1+\eps)n)$
\begin{equation}
\label{exit_end}
 \nu_j(v) = \IP[\Upsilon]\nu_*(v) 
     + \IP\big[\Upsilon^\complement\big]\nu'_j(v) ,
\end{equation}
where
\begin{align*}
\nu_*(v) &= \IP\big[\tS^{(j)}_{\ttau_1(\partial \B((1+\eps)n)}=v
    \mid \Upsilon\big] ,\\
\nu'_j(v) &= \IP\big[\tS^{(j)}_{\ttau_1(\partial \B((1+\eps)n)}=v
    \mid \Upsilon^\complement\big]
\end{align*}
(observe that if the two walks are coupled on hitting~$V_{k_0}$,
then they are coupled on hitting~$\partial \B((1+\eps)n)$,
so~$\nu_*$ is the same for the two walks).
For $u\in\partial \B(n)$
define the random variables (recall Lemma~\ref{l_GH_revers})
\[
 \G_u = \IE_u \sum_{j=1}^{\tau_1(\partial A_n)}
    \1{S_j=x},
\]
and recall the definition of the event~$F_1$ from~\eqref{df_event_F}.
% \[
%  F = \big\{\tau_1(\partial \B((1+\eps)n))<\tau_1(\partial\B(n))\big\}.
% \]
% for $j=1,2$.
We write, using~\eqref{escape_to_1+eps}
\begin{align*}
\lefteqn{ \frac{H_{A_n}(x,y)}{\hm_{\B(n)}(y)}
 - \frac{H_{A_n}(x,z)}{\hm_{\B(n)}(z)} }\\
&=
  \IE\Big(\frac{\G_y}{\hm_{\B(n)}(y)}
 - \frac{\G_z}{\hm_{\B(n)}(z)}\Big)\\
&= \Big(\frac{\IP_y[F_1]}{\hm_{\B(n)}(y)}
\big(\IP[\Upsilon]G(\nu_*,x) 
     + \IP\big[\Upsilon^\complement\big]G(\nu'_1,x)\big)\\
& \qquad\quad -  \frac{\IP_z[F_1]}{\hm_{\B(n)}(z)}
\big(\IP[\Upsilon]G(\nu_*,x) 
     + \IP\big[\Upsilon^\complement\big]G(\nu'_2,x)\big)\Big)\\
&\leq 
 \begin{cases}
  G(\nu_*,x)O(n^{-1}) + c_{12}\IP\big[\Upsilon^\complement\big]
%  \displaystyle\frac{\IP_y[F]}{\hm_{\B(n)}(y)}
G(\nu'_1,x),
   & \text{for }d=2,\\
  G(\nu_*,x)\capa(\B(n))O(n^{-1})
+ c_{13}\IP\big[\Upsilon^\complement\big]
       \capa(\B(n))G(\nu'_1,x),\vphantom{\int\limits^{A^B}}
   & \text{for }d\geq 3.\\
 \end{cases}
\end{align*}
Note that~\eqref{capacity_ball_3}
and Lemma~\ref{l_G_annulus} imply that, 
for \emph{any} probability measure~$\mu$
on~$\partial\B((1+\eps)n)$, it holds that
$G(\mu,x)$ (in two dimensions) and $\capa(\B(n))G(\mu,x)$
(in higher dimensions) are of constant order. 
Together with~\eqref{prob_Upsilon}, this implies that
\[
 \frac{H_{A_n}(x,y)}{\hm_{\B(n)}(y)}
 - \frac{H_{A_n}(x,z)}{\hm_{\B(n)}(z)}
 \leq c_{14}n^{-1} + c_{15} \Big(\frac{n}{\|y-z\|}\Big)^{-\beta}.
\]
Since~$y$ and~$z$ can be interchanged, 
this concludes the proof of Proposition~\ref{p_SRW_Hoelder}. 
\end{proof}

\smallskip

\begin{proof}[\textbf{Proof of Theorem~\ref{t_critical}}]
 Consider the sequence $b_k= \exp\big(\exp(3^k)\big)$, 
and let $v_k = b_ke_1 \in \R^2$.
Let us fix some $\gamma\in \big(1,\sqrt{\pi/2}\big)$.
Denote $B_k=\B(v_k,b_k^{1/2})$ and $B'_k=\B(v_k,\gamma b_k^{1/2})$.
Observe that Lemma~\ref{l_cap_distantball}
together with~\eqref{formula_for_a} imply
\begin{equation}
\label{cap_B_k}
 \capa\big(B_k\cup\{0\}\big) 
   = \frac{4}{3\pi}\big(1+O((\ln^{-1}b_k)\big)\ln b_k .
\end{equation}

 Let~$N_k$ be the number of excursions
between $\partial B_k$ and~$\partial B'_k$ in $\RI(1)$.
Lemma~\ref{l_escape_from_ball} implies that for 
any~$x\in\partial B'_k$ it holds that
\begin{equation}
\label{hit_B_k}
 \IP_x[\htau(B_k)<\infty] = 1 - \frac{2\ln\gamma}{3\ln b_k}
\big(1+O((\ln^{-1}b_k)\big),
\end{equation}
so the number of excursions of one particle
has ``approximately Geometric'' distribution
with parameter $\frac{2\ln\gamma}{3\ln b_k}(1+O((\ln^{-1}b_k))$.
Observe that if~$X$ is a Geometric$(p)$ random variable
and~$Y$ is Exponential$(\ln (1-p)^{-1})$ random variable, then 
$Y\preceq X \preceq Y+1$, where``$\preceq$'' means
stochastic domination.  So, the number of excursions of one particle
dominates an Exponential$\big(\frac{2\ln\gamma}{3\ln b_k}
(1+O(\ln^{-1}b_k))\big)$ and is dominated by 
Exponential$\big(\frac{2\ln\gamma}{3\ln b_k}
(1+O(\ln^{-1}b_k))\big)$ plus~$1$.
% Now, we need to show that for any $C>0$ there exists $h_C>0$
% suth that
% \begin{equation}
% \label{step_sqrt_below}
%  \IP\Big[N_k\leq \frac{2}{\ln\gamma}\ln^2 b_k 
%    - C\ln^{3/2} b_k\Big]\geq h_C
% \end{equation}
% for all~$k$. Indeed, clearly, $N_k$ is stochastically
% upper bounded by a compound Poisson random variable
% \begin{equation}
% \label{compound_Poisson}
%  \sum_{k=1}^\zeta (1+\eta_k),
% \end{equation}
% where~$\zeta$ is Poisson with parameter
% $\frac{4}{3\pi}(1+O(b_k^{-1/2}))\ln b_k$, and
% $\eta$'s are i.i.d.\ Exponential random variables with rate
% $r_k\frac{2\ln\gamma}{3\ln b_k}(1+O(b_k^{-1/2}))$.
% Clearly, for any given~$C'>0$ with uniformly positive
% probability it holds that $\zeta\leq
% \frac{4}{3\pi}\ln b_k - C'\ln^{1/2}b_k$.
% Observe that,
% by Theorem~1 of~\cite{CR86}, the median of a Gamma-distributed
% random variable is bounded above by its mean.
% Since, given~$\zeta=n$, the random variable 
% in~\eqref{compound_Poisson}
% becomes $n+\text{Gamma}(n,r_k)$, 
% the claim~\eqref{step_sqrt_below} follows.

Now, let us argue that 
\begin{equation}
\label{Nk_Normal}
\frac{\ln\gamma}{\sqrt{6}\ln^{3/2}b_k}
\Big(N_k - \frac{2}{\ln\gamma}\ln^2 b_k\Big)
 \convlaw \text{standard Normal.}
\end{equation}
Indeed, for the (approximately) compound Poisson
random variable~$N_k$ the previous discussion yields
\begin{equation}
\label{compound_Poisson}
 \sum_{k=1}^\zeta \eta_k \preceq 
\frac{N_k}{\ln b_k} \preceq \sum_{k=1}^{\zeta'} (\eta'_k+\ln^{-1}b_k),
%\qquad \text{a.s.,}
\end{equation}
where~$\zeta$ and $\zeta'$ are both Poisson with parameter
$\frac{4}{3}(1+O((\ln^{-1}b_k))\ln b_k$ (the difference is 
in the $O(\cdot)$), and
$\eta$'s are i.i.d.\ Exponential random variables with rate
$\frac{2\ln\gamma}{3}(1+O(\ln^{-1}b_k))$.
Since the Central Limit Theorem is clearly valid
for $\sum_{k=1}^\zeta \eta_k$ (the expected number of 
terms in the sum goes to infinity, while the number of summands
remain the same), one obtains~\eqref{Nk_Normal}
after some easy calculations\footnote{Indeed, 
if~$Y_\lambda=\sum_{j=1}^{Q_\lambda} Z_j$ is a compound Poisson
random variable, where $Q_\lambda$ is Poisson with mean~$\lambda$
and~$Z$'s are i.i.d.\ Exponentials with parameter~$1$, then
a straightforward computation shows that the moment
generating function of $(2\lambda)^{-1/2}(Y_\lambda-\lambda)$
is equal to $\exp(\frac{t^2}{2(1-(t/2\lambda))})$,
which converges to $e^{t^2/2}$ as $\lambda\to \infty$.}.

Next, observe that $\frac{\pi}{4\gamma^2}>\frac{1}{2}$ by our 
choice of $\gamma$. Choose some~$\beta\in (0,\frac{1}{2})$
in such a way that $\beta + \frac{\pi}{4\gamma^2}> 1$, 
and define~$q_\beta>0$ to be such that 
\[
 \int_{-\infty}^{-q_\beta}\frac{1}{\sqrt{2\pi}}e^{-x^2/2}\, dx = \beta. 
\]
Define also the sequence of events
\begin{equation}
\label{df_Phi_k}
 \Phi_k = \Big\{N_k\leq \frac{2}{\ln\gamma}\ln^2 b_k - 
    q_\beta\frac{\sqrt{6}\ln^{3/2}b_k}{\ln\gamma}\Big\}.
\end{equation}
Now, the goal is to prove that
\begin{equation}
\label{mnogo_Phi_k}
 \liminf_{n\to\infty} \frac{1}{n}\sum_{j=1}^n \1{\Phi_j}
   \geq \beta \qquad \text{a.s.}
\end{equation}
Observe that~\eqref{Nk_Normal} clearly implies that
$\IP[\Phi_k]\to\beta$ as $k\to\infty$, but this fact alone is
not enough, since the above events are not independent.
To obtain~\eqref{mnogo_Phi_k}, it is sufficient to prove that 
\begin{equation}
\label{cond_Ak}
 \lim_{k\to\infty}\IP[\Phi_k\mid \D_{k-1}] = \beta \qquad \text{a.s.},
\end{equation}
where~$\D_j$ is the partition generated by the 
events $\Phi_1,\ldots, \Phi_j$. 
In order to prove~\eqref{cond_Ak}, we need to prove (by induction) that
for some $\kappa>0$ we have
\begin{equation}
\label{induct_Ak}
 \kappa\leq \IP[\Phi_k\mid \D_{k-1}] \leq 1-\kappa,
 \quad \text{ for all }k\geq 1.
\end{equation}
Take a small enough $\kappa<\beta$, and let us try to 
do the induction step. Let~$D$ be any event from~$\D_{k-1}$;
\eqref{induct_Ak} implies that $\IP[D]\geq \kappa^{k-1}$.
The following is a standard argument in random interlacements;
see e.g.\ the proof of Lemma~4.5 of~\cite{CT12}
(or Claim~8.1 of~\cite{DRS14}).
% Now, we proceed similarly to (Sznitman, Teixeira, ...).
Abbreviate ${\widehat B}=B_1\cup\ldots \cup B_{k-1}$, and 
let 
\begin{align*}
%  \LL_{11} &= \big\{\text{trajectories that intersect ${\widehat B}$
% and do not intersect $B_k$}\big\},\\
 \LL_{12} &= \big\{\text{trajectories of $\RI(1)$
that first intersect ${\widehat B}$
and then $B_k$}\big\},\\
 \LL_{21} &= \big\{\text{trajectories that first intersect $B_k$
and then ${\widehat B}$}\big\},\\
  \LL_{22} &= \big\{\text{trajectories that intersect $B_k$
and do not intersect ${\widehat B}$}\big\}.
\end{align*}
Also, let $\tL_{12}$ and~$\tL_{21}$ be
independent copies of $\LL_{12}$ and~$\LL_{21}$.
Then, let $N_k^{(ij)}$ and~$\tN_k^{(ij)}$ 
represent the numbers of excursions between~$\partial B_k$
and~$\partial B'_k$ generated 
by the trajectories from~$\LL_{ij}$ and~$\tL_{ij}$
correspondingly.

By construction, we have $N_k=N_k^{(12)}+N_k^{(21)}+N_k^{(22)}$;
also, the random variable 
$\tN_k:=\tN_k^{(12)}+\tN_k^{(21)}+N_k^{(22)}$
is independent of~$D$ and has the same law as~$N_k$.
Observe also that, by our choice of~$b_k$'s, we have
$\ln b_k = \ln^3 b_{k-1}$. 
Define the event
\[
 W_k = \big\{\max\{N_k^{(12)},N_k^{(21)},\tN_k^{(12)},
 \tN_k^{(21)}\} \geq \ln^{17/12} b_k\big\}.
\]
Observe that, by Lemma~\ref{l_escape_from_ball}~(i) and 
Lemma~\ref{l_cap_distantball}~(i), the 
cardinalities of~$\LL_{12}$ and~$\LL_{21}$ have Poisson
distribution with mean~$O(\ln b_{k-1})=O(\ln^{1/3} b_k)$
(for the upper bound, one can use 
that ${\widehat B}\subset \B(2b_{k-1})$).
So, the expected value of all $N$'s in the above display 
is of order $\ln^{1/3}b_k \times \ln b_k = \ln^{16/12}b_k$
(recall that each trajectory generates~$O(\ln b_k)$
excursions between~$\partial B_k$
and~$\partial B'_k$). 
Using a suitable bound on the tails of the 
compound Poisson random variable (see e.g.~(56) of~\cite{CPV15}),
we obtain $\IP[W_k]\leq c_1 \exp(-c_2 \ln^{1/12} b_k)$, so
for any $D\in \D_{k-1}$ (recall that $\ln b_k = e^{3^k}$),
\begin{equation}
\label{small_processes}
 \IP[W_k\mid D\big] \leq \frac{\IP[W_k]}{\IP[D]}
\leq c_1 (1/\kappa)^{k-1}\exp\big(-c_2 e^{3^k/12}\big).
\end{equation}
This implies that (note that 
$\tN_k=N_k-N_k^{(12)}-N_k^{(21)}+\tN_k^{(12)}
 +\tN_k^{(21)}$) 
\begin{align*}
 \IP[\Phi_k\mid D] & = \IP\Big[N_k\leq \frac{2}{\ln\gamma}\ln^2 b_k - 
    q_\beta\frac{\sqrt{6}\ln^{3/2}b_k}{\ln\gamma}
     \;\Big|\; D\Big]\\
&\leq    \IP\Big[W_k^\complement, N_k\leq \frac{2}{\ln\gamma}\ln^2 b_k - 
    q_\beta\frac{\sqrt{6}\ln^{3/2}b_k}{\ln\gamma}
     \;\Big|\; D\Big] + \IP[W_k\mid D\big]\\
 &\leq    \IP\Big[\tN_k\leq \frac{2}{\ln\gamma}\ln^2 b_k - 
    q_\beta\frac{\sqrt{6}\ln^{3/2}b_k}{\ln\gamma}
      + 2\ln^{17/12} b_k\Big]
    + \IP[W_k\mid D\big]\\
    &\to \beta \quad \text{ as }k\to\infty
\end{align*}
since $17/12 < 3/2$ and by~\eqref{small_processes}
(together with an analogous lower bound,
this takes care of the induction step in~\eqref{induct_Ak}
 as well).
So, we have 
\begin{equation}
\label{limsupP[A|D]}
\limsup_{k\to\infty}\IP[\Phi_k\mid \D_{k-1}] \leq
  \beta \qquad \text{a.s.},
\end{equation}
and, analogously, it can be shown that
\begin{equation}
\label{liminfP[A|D]}
\liminf_{k\to\infty}\IP[\Phi_k\mid \D_{k-1}] \geq
  \beta \qquad \text{a.s.},
\end{equation}
We have just proved~\eqref{cond_Ak} and hence~\eqref{mnogo_Phi_k}.

% Now, assume that the excursions between $B_k$ and~$B'_k$, $k\geq 1$,
% were constructed using the soft local times.
% \textbf{(discussion here? define them formally? at least 
% denote them as e.g. $Z^{(k)}_j$\dots)}
Now, let~$(\hZ^{(j),k},j\geq 1)$ be 
the $\RI$'s excursions between $B_k$ and~$B'_k$, $k\geq 1$,
constructed as in Section~\ref{s_SLT}.
Also, for $k\in [\Delta_1,\Delta_2]$ (to be specified later)
let~$(\tZ^{(j),k},j\geq 1)$ be sequences 
of i.i.d.\ excursions, with starting points chosen accordingly 
to~$\hhm_{B_k}^{B'_k}$. We assume that all the above
excursions are constructed simultaneously for
all $k\in [\Delta_1,\Delta_2]$\footnote{we have
chosen to work with finite range of~$k$'s because 
constructing excursions with soft local times on
an infinite collection of disjoint sets requires some additional
formal treatment}. 
Next, let us define the sequence of \emph{independent} events
% (recall the definition of~$\Phi_k$ from~\eqref{df_Phi_k})
\begin{equation}
\label{df_I_k}
 \JJ_k= \Big\{\text{there exists } x\in B_k \text{ such that }
x\notin \tZ^{(j),k} \text{ for all } j\leq
\frac{2}{\ln\gamma}\ln^2 b_k - \ln^{11/9}b_k\Big\},
\end{equation}
that is, $\JJ_k$ is the event that the set~$B_k$
is not completely covered by the first 
$\frac{2}{\ln\gamma}\ln^2 b_k - \ln^{11/9}b_k$
independent excursions.
% Let us fix~$\delta_0>0$ such that 
% $\beta+\frac{\pi}{4\gamma^2}>1+\delta_0$. We are going to
% prove that \textbf{do we need this?}
% \begin{equation}
% \label{mnogo_I_k}
%  \liminf_{n\to\infty} \frac{1}{n}\sum_{j=1}^n \1{\JJ_j}
%    \geq \frac{\pi}{4\gamma^2}-\delta_0 \qquad \text{a.s.}
% \end{equation}

Next, fix~$\delta_0>0$ such that 
$\beta+\frac{\pi}{4\gamma^2}>1+\delta_0$.
Let us prove the following fact:
\begin{lem}
\label{l_compare_RW_RI}
 For all large enough~$k$ it holds that
\begin{equation}
\label{eq_compare_RW_RI}
  \IP[\JJ_k]\geq \frac{\pi}{4\gamma^2}-\delta_0.
\end{equation}
\end{lem}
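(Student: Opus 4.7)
The plan is a second moment (Paley--Zygmund) method applied to
\[
 V_k = \big|\{x \in B_k : x \notin \textstyle\bigcup_{j=1}^{M_k} \tZ^{(j),k}\}\big|,
\]
where $M_k = \frac{2}{\ln\gamma}\ln^2 b_k - \ln^{11/9}b_k$. Since $\JJ_k = \{V_k \geq 1\}$, the task reduces to producing matching asymptotic bounds on the first and second moments of $V_k$ and then invoking $\IP[\JJ_k] \geq (\E V_k)^2 / \E[V_k^2]$.

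First, because $B'_k$ has diameter $\Theta(b_k^{1/2})$ while sitting at distance $\Theta(b_k)$ from the origin, Lemma~\ref{l_relation_S_hatS} lets me replace the $\s$-excursion law by the SRW excursion law with a multiplicative error $1+O(b_k^{-1/2}/\ln b_k) = 1+o(1)$; all further computations are done with SRW. For each $x \in B_k$ I then estimate the per-excursion hit probability $q_k(x) = \IP_{y \sim \hhm_{B_k}^{B'_k}}[\tau_0(x) < \tau_0(\partial B'_k)]$ by applying optional stopping to the martingale $a(S_\cdot - x)$ at $\tau_0(x) \wedge \tau_0(\partial B'_k)$, averaging over $y$, which is nearly uniform on $\partial B_k$ by Lemmas~\ref{l_G_entrance} and~\ref{l_escape_to}, and using~\eqref{formula_for_a}. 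For the second moment I also need joint miss probabilities $\IP[x, y \text{ both unhit}]$; these reduce, via a two-point variant of the capacity estimates in Lemmas~\ref{l_cap_distantball} and~\ref{l_escape_from_ball}, to an explicit function of $\|x-y\|$ together with the one-point hit probabilities.

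Plugging these in, $\E V_k = \sum_{x \in B_k}(1-q_k(x))^{M_k}$ and $\E V_k^2 = \E V_k + \sum_{x \neq y}\IP[\text{both unhit}]$; the double sum is evaluated by a Laplace-type argument in the variable $\ln\|x-y\|/\ln b_k^{1/2}$. The choice of $M_k$ is precisely tuned so that the leading-order exponent $M_k q_k(x)$ balances $\ln|B_k|\sim\ln b_k$, leaving a nontrivial limit for the first moment, while the pair-correlation term in the second moment is controlled by the capacity estimates. The main obstacle is bookkeeping the sub-leading terms with enough accuracy to extract the exact constant $\pi/(4\gamma^2)$: the factor $\pi$ comes from $|B_k|\sim \pi b_k$, while $1/(4\gamma^2)$ reflects the relative geometry of $B_k$ inside $B'_k$, coupled with the additive constant $c_0 = (2\gamma_{\mathrm{EM}}+\ln 8)/\pi$ appearing in the potential kernel. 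Extra care is needed near $\partial B_k$, where $q_k(x)$ picks up boundary corrections; these can be controlled via Proposition~\ref{p_SRW_Hoelder} applied on an annulus around $x$, ensuring the one- and two-point hit probabilities are uniform enough for the Laplace-type evaluation of the sums.
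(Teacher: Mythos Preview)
Your approach is genuinely different from the paper's, but it has a real gap: the naive second-moment method does not work for two-dimensional cover-time problems of this kind. If you carry out the computation you outline, you find that for $x,y\in B_k$ at distance~$r$, the single-excursion probability of hitting both satisfies (to leading order)
\[
 q_k(x\cap y)\;\approx\; q_k(x)\Big(1-\frac{2\ln r}{\ln b_k}\Big),
\]
so the correlation factor in the two-point non-hitting probability is
\[
 \frac{\IP[x,y\text{ both unhit}]}{\IP[x\text{ unhit}]\,\IP[y\text{ unhit}]}
 \;\approx\; e^{M_k q_k(x\cap y)}\;\asymp\; b_k\, r^{-2}.
\]
Summing over pairs gives $\E[V_k^2]/(\E V_k)^2\asymp \ln b_k\to\infty$, so Paley--Zygmund yields only a vanishing lower bound on $\IP[\JJ_k]$. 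This logarithmic blow-up is exactly the obstruction that forces the multi-scale/tree arguments in the late-points literature (Dembo--Peres--Rosen--Zeitouni); it cannot be removed by sharpening the one- and two-point estimates, and Proposition~\ref{p_SRW_Hoelder} does not help here --- that proposition concerns regularity of entrance measures and is used in the paper only inside the proof of the soft-local-time deviation bound (Lemma~\ref{l_LD_SLT}), not for controlling hit probabilities of individual sites. Your explanation of the constant is also off: the additive constant $c_0=(2\gamma_{\mathrm{EM}}+\ln 8)/\pi$ in the potential kernel is a lower-order term and cannot influence the leading constant.

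The paper avoids moment computations entirely. It embeds the picture in a torus of side roughly $2\gamma b_k^{1/2}$, invokes Ding's result that the torus is not yet covered at a time slightly below the cover time, uses Lemma~\ref{l_LD_SLT} to couple the i.i.d.\ excursions into the SRW excursion set on the torus (so if the SRW excursions do not cover, neither do the i.i.d.\ ones), and then uses translation invariance: if the torus is uncovered with probability close to~$1$, then the ball $\B(b_k^{1/2})$ --- which occupies a $\pi/(4\gamma^2)+o(1)$ fraction of the torus --- is uncovered with probability at least $\pi/(4\gamma^2)-\delta_0$. The constant thus has a purely geometric origin (an area ratio), which a second-moment argument cannot reproduce.
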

\begin{proof}
We first outline the proof in the following way: 
\begin{itemize}
 \item consider a simple random walk on a torus
of slightly bigger size (specifically, $(\gamma+\eps_1)b_k^{1/2}$),
so that the set~$B'_k$ would ``completely fit'' there;
 \item we recall a known result that, up to time~$t_k$
(defined just below), the torus is not completely covered
with high probability;
 \item using soft local times, we \emph{couple} 
the i.i.d.\ excursions between~$B_k$ and~$B'_k$ with the 
simple random walk's excursions between the corresponding
sets on the torus (denoted later as~$A$ and~$A'$);
 \item using Lemma~\ref{l_LD_SLT}, we 
conclude in~\eqref{ind_subset_rw} that the set 
of simple random walk's excursions is likely to contain
the set of i.i.d.\ excursions;
 \item finally, we note that the simple random walk's excursions
will not complete cover the set~$A$ with at least constant
probability, and this implies~\eqref{eq_compare_RW_RI}.
\end{itemize}

Note that Theorem~1.2 of~\cite{D12} implies that 
there exists (large enough)~$\hat c$
such that the torus~$\Z^2_m$ is not completely covered
by time $\frac{4}{\pi}m^2\ln^2 m - {\hat c}m^2\ln m \ln\ln m$
with probability converging to~$1$ as~$m\to\infty$. 
Let~$\eps_1$ be a small constant
chosen in such a way that 
$\frac{\pi}{4(\gamma+\eps_1)^2}>\frac{\pi}{4\gamma^2}-\delta_0$. 
Abbreviate
\[
  t_k = \frac{4}{\pi} 
 (\gamma+\eps_1)^2b_k\ln^2 \big((\gamma+\eps_1)b_k^{1/2}\big)
   - {\hat c} (\gamma+\eps_1)^2b_k
\ln \big((\gamma+\eps_1)b_k^{1/2}\big) 
\ln\ln \big((\gamma+\eps_1)b_k^{1/2}\big);
\]
due to the above observation, the
probability that $\Z^2_{(\gamma+\eps_1)b_k^{1/2}}$ is covered
by time~$t_k$ goes to~$0$ as $k\to\infty$.
Let $Z^{(1)},\ldots,Z^{(N^*_{t_k})}$ be the simple random
walk's excursions on the torus $\Z^2_{(\gamma+\eps_1)b_k^{1/2}}$
between~$\partial\B(b_k^{1/2})$ and~$\partial\B(\gamma b_k^{1/2})$.
% where~$\eps_1$ is a small enough constant. 
Assume also that the torus is mapped on~$\Z^2$
in such a way that its image is centered in~$y_k$.
Denote
\[
 m_k =  \frac{2}{\ln\gamma}\ln^2 b_k - (\ln\ln b_k)^2\ln b_k
\]
Then, we
take $\delta = O\big((\ln b_k)^{-1}(\ln\ln b_k)^2\big)$ in 
Lemma~\ref{l_number_exc_torus},
and obtain that
\begin{equation}
\label{N*t_k}
 \IP[N^*_{t_k}\geq m_k] \geq 1-c_1\exp(-c_2(\ln\ln b_k)^4).
\end{equation}
Next, abbreviate (recall~\eqref{df_I_k})
\[
 m'_k = \frac{2}{\ln\gamma}\ln^2 b_k - \ln^{11/9}b_k.
\]

Also, denote $A=\B(b_k^{1/2})$, $A'=\B(\gamma b_k^{1/2})$,
 $A,A'\subset\Z^2_{(\gamma+\eps_1)b_k^{1/2}}$.
Observe that, due to Lemma~\ref{l_compare_RW_RI}
\begin{equation}
\label{hm_hhm}
 \hm_A^{A'}(y) = \hm_{B_k}^{B'_k}(y) = 
\hhm_{B_k}^{B'_k}(y)\big(1+O(b_k^{-1/2})\big).
\end{equation}
We then couple the random walk's excursions~$(Z^{(j)}, j\geq 1)$
with the independent excursions $(\tZ^{(j),k},j\geq 1)$
using the soft local times.
Using Lemma~\ref{l_LD_SLT} (with $\theta=O(\ln^{-8/9}b_k)$)
 and~\eqref{hm_hhm},
we obtain
\begin{equation}
% \begin{align}
% \lefteqn{
 \IP\big[L_{m_k}(y)\geq \hm_{B_k}^{B'_k}(y)
\big(\textstyle\frac{2}{\ln\gamma}\ln^2 b_k - \ln^{10/9}b_k\big)
\text{ for all }y\in\partial B_k\big]
% }\nonumber\\
% & 
\geq 1-c_3 \exp(-c_4 \ln^{2/9} b_k).
% \phantom{********************}
\label{slt_rw>}
% \end{align}
\end{equation}
Let $\ttL_j(y)=({\tilde \xi}_1 + \cdots+{\tilde \xi}_j)
 \hhm_{B_k}^{B'_k}(y)$ be the soft local times for the independent
excursions (as before, ${\tilde \xi}$'s are i.i.d.\ 
Exponential(1) random variables).
Using usual large deviation bounds for sums 
of i.i.d.\ random variables
together with~\eqref{hm_hhm}, we obtain that
\begin{align}
\lefteqn{
 \IP\big[\ttL_{m'_k}(y)\leq \hm_{B_k}^{B'_k}(y)
\big(\textstyle\frac{2}{\ln\gamma}\ln^2 b_k - \ln^{10/9}b_k\big)
\text{ for all }y\in\partial B_k\big]}
\phantom{***************}
\nonumber\\
& = \IP\big[{\tilde \xi}_1 + \cdots+{\tilde \xi}_{m'_k}
  \leq \textstyle\frac{2}{\ln\gamma}\ln^2 b_k - \ln^{10/9}b_k\big]
\nonumber\\
& \geq 1-c_5 \exp(-c_6 \ln^{4/9} b_k).
\label{slt_iid<}
\end{align}
So, \eqref{slt_rw>}--\eqref{slt_iid<} imply that
\begin{equation}
\label{ind_subset_rw}
 \IP\big[\{\tZ^{(j),k},j\leq m'_k\}
  \subset\{Z^{(j)}, j\leq N^*_t\}\big]\geq 
1-c_7 \exp(-c_8 \ln^{2/9} b_k).
\end{equation}

Then, we use the translation invariance of the torus to obtain the 
following:
If $\IP[\Z^2_m \text{ is not completely covered}]\geq c$, and
$A\subset \Z^2_m$ is such that $|A|\geq q m^2$, then
$\IP[A \text{ is not completely covered}]\geq qc$.
So, since 
\[
\big|\B(b_k^{1/2})\big|=\Big(\frac{\pi}{4(\gamma+\eps_1)^2}+o(1)\Big)
\big|\Z^2_{(\gamma+\eps_1)b_k^{1/2}}\big|, 
\]
 Lemma~\ref{l_compare_RW_RI} now follows from~\eqref{N*t_k}
and~\eqref{ind_subset_rw}.
\end{proof}

Now, abbreviate (recall~\eqref{df_Phi_k} and~\eqref{mnogo_Phi_k})
\[
 m''_k = \frac{2}{\ln\gamma}\ln^2 b_k - 
    q_\beta\frac{\sqrt{6}\ln^{3/2}b_k}{\ln\gamma},
\]
and, being $\hL^{(k)}$ the soft local time 
of the excursions of random interlacements
between~$\partial B_k$ and~$\partial B'_k$,
 define the events
\begin{equation}
\label{df_Mk}
 M_k = \big\{ \hL^{(k)}_{m''_k}(y) 
  \leq \ttL^{(k)}_{m'_k}(y)
\text{ for all }y\in\partial B_k\big\}.
\end{equation}
Note that on~$M_k$ it holds that
$\{\hZ^{(j),k},j\leq m''_k\} \subset \{\tZ^{(j),k},j\leq m'_k\}$.

Then, we need to prove that
\begin{equation}
\label{compare_RI_indep_excrus}
  \IP[M_k]\geq 1 - c_9\ln^2 b_k \exp(-c_{10}\ln^{2/3}b_k).
\end{equation}
Indeed, first, analogously to~\eqref{slt_iid<}
we obtain (note that $\frac{11}{9}<\frac{4}{3}<\frac{3}{2}$)
\begin{equation}
\label{slt_iid>}
  \IP\big[\ttL_{m'_k}(y)\geq \hm_{B_k}^{B'_k}(y)
\big(\textstyle\frac{2}{\ln\gamma}\ln^2 b_k - \ln^{4/3}b_k\big)
\text{ for all }y\in\partial B_k\big]
 \geq 1-c_{11} \exp(-c_{12} \ln^{2/3} b_k).
\end{equation}
Then, we use Lemma~\ref{l_LD_SLT} with $\theta=O(\ln^{-1/2}b_k)$ to obtain that
\begin{equation}
\label{slt_ri<}
  \IP\big[\hL_{m''_k}(y)\leq \hm_{B_k}^{B'_k}(y)
\big(\textstyle\frac{2}{\ln\gamma}\ln^2 b_k - \ln^{4/3}b_k\big)
\text{ for all }y\in\partial B_k\big]
 \geq 1-c_{13} \exp(-c_{14} \ln b_k),
\end{equation}
and \eqref{slt_iid>}--\eqref{slt_ri<}
 imply~\eqref{compare_RI_indep_excrus}.

Now, it remains to observe that on the event $\Phi_k\cap\JJ_k\cap M_k$
the set~$B_k$ contains at least one vacant site.
By~\eqref{mnogo_Phi_k}, \eqref{eq_compare_RW_RI},  
and~\eqref{compare_RI_indep_excrus}, 
one can choose large enough $\Delta_1<\Delta_2$ such that,
with probability arbitrarily close to~$1$,
there is $k_0\in[\Delta_1,\Delta_2]$ such that
 $\Phi_{k_0}\cap\JJ_{k_0}\cap M_{k_0}$ occurs.
This concludes the proof of~Theorem~\ref{t_critical}.
\end{proof}

\section*{Summary of notation}
For reader's convenience, we include here a brief summary 
of notation used in this paper:

\begin{itemize}
 \item $\B(y,r)$: the ball centered in~$y$ and of radius~$r$,
with respect to the Euclidean norm;
 \item $S=(S_n, n\geq 0)$: the two-dimensional simple random walk;
 \item $\tau_0(A)$ and $\tau_1(A)$: entrance and hitting times
of set~$A$, cf.~\eqref{entrance_t} and~\eqref{hitting_t};
 \item $a(\cdot)$: the potential kernel of the 
two-dimensional simple random walk, cf.~\eqref{def_a(x)};
 \item $\hm_A(\cdot)$ and $\capa(A)$: harmonic measure 
and capacity of set~$A$ with respect to the simple
random walk in dimension $d\geq 2$ 
(see~\eqref{def_hm} and \eqref{df_cap2} for $d=2$ and
the beginning of Section~\ref{s_toolbox} for higher dimensions);
 \item $\s=(\s_n, n\geq 0)$: Doob's $h$-transform of~$S$,
with respect to~$a$ (informally, two-dimensional simple random walk
conditioned on not hitting the origin);
 \item $\htau_0(A)$ and $\htau_1(A)$: entrance and hitting times
of set~$A$ of the walk~$\s$;
 \item $\he_A(\cdot)$ and $\hhm_A(\cdot)$: equilibrium
and harmonic measures on~$A$ with respect to~$\s$, see formulas below \eqref{df_cap2};
 \item $\V^\alpha$: the vacant set of two-dimensional
random interlacements on level~$\alpha$;
 \item $H_A(x,y)$: the Poisson kernel of simple
random walk, cf.~\eqref{df_Poisson_kernel};
 \item $G(x,y)$ and $G_A(x,y)$: the Green's function
of the simple random walk in $d\geq 3$, and the 
Green's function restricted on a finite set~$A$, $d\geq 2$;
 \item $\Es_A(x)$: escape probability from~$A$, starting
at~$x$ (see the beginning of Section~\ref{s_toolbox});
 \item $\Z^2_n$: the two-dimensional torus, 
$\Z^2_n=\Z^2/n\Z^2$;
 \item $X=(X_k, k\geq 0)$: simple random walk on the 
two-dimensional torus;
 \item $L_k(\cdot)$ and $\hL_k(\cdot)$: soft local times
(at $k$th excursion) of the excursion processes with respect 
to simple random walk on the torus and random interlacements,
cf.~\eqref{df_SLT_torus} and~\eqref{df_SLT_RI};
 \item $\hm_A^{A'}(\cdot)$ and $\hhm_A^{A'}(\cdot)$: 
harmonic measures on~$A\subset A'$ with respect to~$A'$,
for simple random walk on the torus and random interlacements,
cf.~\eqref{df_hm_set_RW} and~\eqref{df_hm_set_RI};
 \item $\psi(\cdot,\cdot)$ and $\hpsi(\cdot,\cdot)$:
invariant measures for the process of first/last sites
of excursions, 
cf.~\eqref{df_psi} and~\eqref{df_hpsi};
 \item $\tS$: conditioned random walk on the annulus
 $\B((1+\eps)n)\setminus \B(n)$, cf.~\eqref{def_tilde_P};
 \item $\tH_A(u,v)$: the Poisson kernel with respect to~$\tS$,
 cf.~\eqref{tilde_H}.
\end{itemize}

\section*{Acknowledgments}
We thank Greg Lawler for very valuable advice on the proof 
of Proposition~\ref{p_SRW_Hoelder}. We also thank 
Diego de Bernardini, Christophe Gallesco, and the referee for careful
reading of the manuscript and valuable comments and suggestions. 
This work was partially supported by CNPq
and MATH-AmSud project LSBS.

\end{document}